\newcommand{\nd}{\triangleleft}
\newcommand{\Q}{\ensuremath{\mathbb{Q}}}
\newcommand{\Z}{\ensuremath{\mathbb{Z}}}
\newcommand{\N}{\ensuremath{\mathbb{N}}}
\newcommand{\A}{\ensuremath{\mathbb{A}}}
\newcommand{\R}{\ensuremath{\mathbb{R}}}
\newcommand{\F}{\ensuremath{\mathbb{F}}}
\newcommand{\E}{\ensuremath{\mathbb{E}}}
\newcommand{\oo}{\ensuremath{\mathsf{O}}}
\newcommand{\co}{\ensuremath{\mathcal{O}}}
\newcommand{\cu}{\ensuremath{\mathcal{U}}}
\newcommand{\cs}{\ensuremath{\mathcal{S}}}
\newcommand{\ct}{\ensuremath{\mathcal{T}}}
\newcommand{\cg}{\ensuremath{\mathcal{G}}}
\newcommand{\gl}{\mathfrak{g}}
\newcommand{\ml}{\mathfrak{m}}
\newcommand{\tl}{\mathfrak{t}}
\newcommand{\ul}{\mathfrak{u}}
\newcommand{\cst}{\Upsilon}
\newcommand{\fhs}{\psi}
\newtheorem{thm}{}
\newtheorem{ste}[thm]{Theorem}
\newtheorem{mdef}[thm]{Definition}
\newtheorem{lem}[thm]{Lemma}
\newtheorem{gev}[thm]{Corollary}
\newtheorem{ver}[thm]{Conjecture}
\newtheorem{pro}[thm]{Proposition}
\begin{document}
\begin{center}
 \textbf{\Large Local Summability of Characters on $p$-adic Reductive Groups\\}
 \vspace{0.3cm}
{\large Julius Witte}\\
\vspace{0.1cm}
Radboud Universiteit Nijmegen\\
Heyendaalseweg 135, 6525AJ Nijmegen, the Netherlands\\
email: J.Witte@math.ru.nl
\end{center}

\tableofcontents
\vspace{12pt}
\noindent\textbf{Acknowledgements}\\
The author would like to thank M. Solleveld for many useful discussions. The author is financially supported by NWO grant nr 613.009.033 and EW wiskundecluster GQT positie. 
\section*{Abstract}
In this paper we study the complex representations of reductive groups over local non-Archimedean fields. We use the building of the reductive group to give upper-bounds for the absolute value of the character of an admissible representation and for the Weyl integration formula for certain regular elements. The upper-bound for the character of a representation is based on the alternative description, depending on the building, of the character as given by R. Meyer and M. Solleveld \cite{MS12}. Once the character and the Weyl integration formula are related to the building, the upper-bounds will follow from a similar argument. Both upper-bounds generalize the upper-bounds given by Harish-Chandra \cite{HC70} to groups defined over fields of positive characteristic. At last following Harish-Chandra's method we combine both upper-bounds to show that for a maximal torus $T$ containing a maximal split torus the character is locally summable on $\{ gtg^{-1} : g\in G,t\in T\}$.
\section{Introduction}
Let $\F$ be a non-Archimedean local field with characteristic $p$ and residue field of order $q$. Let $G$ be a reductive group over $\F$. Let $\pi$ be a complex admissible representation of $G$. Let $\theta$ be the character of the representation $\pi$.
\begin{ver}
 $\theta$ is locally integrable on $G$.
\end{ver}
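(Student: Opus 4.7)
The plan is to run the Harish-Chandra strategy for character integrability in a form that is insensitive to the residual characteristic, with the Bruhat--Tits building replacing any use of the Lie-algebra exponential map. Since the singular locus of $G$ has Haar measure zero, local integrability of $\theta$ on $G$ is equivalent to local integrability on the open dense regular semisimple locus $G_{\mathrm{reg}}$, and after a partition of unity it suffices to control $\int |\theta(x)|\,dx$ on a relatively compact open set lying inside the union of $G$-conjugates of a single maximal torus.

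For the first ingredient I would use the Meyer--Solleveld description of $\theta$ as an alternating sum over facets of the building to derive a pointwise upper bound $|\theta(x)| \le C_\pi(x)$, with $C_\pi(x)$ expressed in terms of geometric data attached to the action of $x$ on the building. For the second ingredient, the Weyl integration formula replaces the integral over the conjugacy class $\{gtg^{-1}: g \in G, t \in T\}$ by an integral over $T$ against the Weyl density $|D(t)| = |\det(1-\mathrm{Ad}(t))_{\gl/\tl}|$. The problem then reduces to checking that $C_\pi(t)$ and $|D(t)|^{-1}$ are each dominated by a building-theoretic quantity whose product remains locally integrable on $T$; once this is done, summing over the finitely many conjugacy classes of maximal tori covers $G_{\mathrm{reg}}$.

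The principal obstacle is obtaining the estimate on $|D(t)|^{-1}$ for tori $T$ that are far from split. When $T$ contains a maximal split torus, the root system of $(G,T)$ is already visible in an apartment of the building of $G$ and the bound can be carried out there, which is exactly the case treated in the present paper. For a genuinely anisotropic factor of $T$ no apartment is available inside the building of $G$, and the natural approach is descent to the building of $G$ over an unramified (or at worst tamely ramified) extension that splits $T$, combined with a careful comparison of the resulting estimates with those available on the original building. This descent is where the positive-characteristic obstructions are expected to bite hardest, since many of the transport-of-structure arguments available in characteristic zero go through the exponential and have no immediate analogue here.
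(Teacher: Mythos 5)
The statement you are asked to prove is Conjecture~1 of the paper, which the paper itself does \emph{not} prove. The author is explicit about this: ``Up to and including the moment of writing this paper the author is not aware of a proof of the conjecture for general $\F$ and $G$.'' What the paper actually establishes (Theorems~2 and~3) is local summability of $\theta$ only on $^GT$ for maximal tori $T$ containing a maximal $\F$-split torus $S$. Your proposal tracks that partial result faithfully: the Meyer--Solleveld building-theoretic expansion of the character in place of the exponential map, the Weyl integration formula, and the reduction to estimates on a single torus are exactly the paper's approach, and you correctly flag non-split tori as the point of difficulty. So as a reconstruction of what the paper does prove, and of why it stops there, your sketch is sound.

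There are, however, two gaps if this is meant as a proof of the conjecture itself. First, you write that ``summing over the finitely many conjugacy classes of maximal tori covers $G_{\mathrm{reg}}$.'' That finiteness is a characteristic-zero fact; in positive characteristic there may be infinitely many $G$-conjugacy classes of maximal $\F$-tori, and the paper explicitly identifies this as a separate obstruction (see the Future Work section: ``in positive characteristic there could be infinitely many conjugacy classes of tori. In that case the estimates should be synchronized in some way''). Thus even if one obtains, for each individual $T$, an estimate good enough to make $\theta$ integrable on $^GT$, the resulting constants would still need to be controlled \emph{uniformly} across an infinite family of tori before a partition-of-unity argument could close the proof. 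Second, the character estimate in the paper (Proposition~\ref{afschattingalgemeen}) is proved only for $\gamma\in{}^GZ_G(S)$: the argument uses that such $\gamma$ fix the standard apartment $\A_a$ pointwise, which fails for regular elements of an anisotropic torus. Your suggested descent to the building over a splitting extension is a reasonable direction, but it is genuinely open --- you would need an analogue of Theorem~\ref{ugrens} that counts $\gamma$-fixed points via the Galois-descended building and still produces the bound $|D(\gamma)|^{-1/2}$, and you would need this with constants that degrade controllably as the splitting extension grows, since no single extension splits all maximal tori at once. Neither issue is resolved by the proposal, and neither is resolved by the paper; the conjecture remains open.
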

In the case that $\F$ has characteristic $0$ this conjecture has been proven by Harish-Chandra, see \cite{HC99}. He transports the problem to the Lie algebra  with the exponential map. On the Lie algebra he shows that the $\theta$ can locally be written as a linear combination of Fourier transforms of nilpotent orbital integrals. Since the Fourier transforms of nilpotent orbital integrals are locally summable that completes the proof. Up to and including the moment of writing this paper the author is not aware of a proof of the conjecture for general $\F$ and $G$. There has been made progress on proving the conjecture in two directions. It has been shown that the conjecture is true for particular groups, e.g. $SL_n$ \cite{Le96} and $GL_n$ \cite{Le05,Ro85}. Also for every group $G$ defined over $\Z$ there is an $N$ such that if $p>N$, then the conjecture holds \cite{CGH14a}. In both \cite{Ro85} and \cite{CGH14a} one more or less generalizes the proof given by Harish-Chandra to fields of positive characteristic. One follows the proof of Harish-Chandra to show that the trace is a linear combination of Fourier transforms of nilpotent orbital integrals, to prove the conjecture when the characteristic is large enough. For each step in the proof one tries to generalize this step to positive characteristic and/or keep track of the assumptions made, see for example \cite{DB02a}. That the nilpotent distributions are locally summable in large positive characteristic is shown by motivic integration in \cite{CGH14a}. Here one shows that $\theta$ is locally summable in characteristic $0$ if and only if it is locally summable for all large $p$.\\

Let $g\in G$ be semi-simple. Define $D(g)$ to be the Harish-Chandra $D$-function: Let $T$ be a maximal torus containing $g$. Define \\ $D(g) := \prod_{\alpha\in R(G,T)} (\alpha(g)-1)$, where $R(G,T)$ is the root system of $T$ and $G$.\\
Let $\lambda(g)$ be such that $q^{\lambda(g)} = |D(g)|$.\\

If $\pi$ is a cuspidal representation Harish-Chandra proves (in characteristic $0$) that $\theta$ is locally summable in an other way, see \cite{HC70}. His proof consists of four steps:
\begin{enumerate}
 \item For every $g\in G$ there exist a compact neighborhood $\omega$ of $g$, a $C\in\R_{>0}$ and $n\in \N$ such that for all $\gamma\in \omega$
$$|\theta(\gamma)|\leq C|\lambda(\gamma)|^n|D(\gamma)|^{-\frac12}.$$
 \item For all $\epsilon\geq 0$
 \begin{multline*}
 \int_{^GT}|D(x)|^{-\frac12-\epsilon} f(x)dx\\
 =|N_G(T)/T|^{-1}\int_T |D(t)|\int_{G/T}|D(t)|^{-\frac12-\epsilon}f(gtg^{-1})dgdt.
 \end{multline*}
 \item For every $f\in C^\infty_c(G)$ there exists a $C\in\R_{> 0}$ such that for all regular $t\in T$
 \[\int_{G/T}f(gtg^{-1})dg\leq C|D(t)|^{-\frac12}.\]
 \item For small $\epsilon\geq 0$ the function $|\lambda(t)|^n|D(t)|^{-\epsilon}$ is locally summable on $T$.
\end{enumerate}
The local summability of the character on $G$ follows from these four statements, because, when the characteristic is $0$, there are only finitely many conjugacy classes of maximal $\F$-tori:
\begin{align}
\begin{split}\label{berinto}
 \int_{^GT}|f\theta|dg &\leq \int_{^GT}C_\theta|\lambda(g)|^n|D(g)|^{-\frac{1}{2}} |f(x)|dx\\ 
 &= |W|^{-1}\int_T c_\theta |D(t)|^\frac12\int_{^GT} |\lambda(gtg^{-1}|^n|f(gtg^{-1})|dgdt\\
 &\leq \frac{C_\theta C_f}{|W|}\int_T|
 \lambda(t)|^ndt \leq C_\epsilon\int_T|D(t)|^{-\epsilon}dt
\end{split}
\end{align}

In this paper we give similar estimates as in statements 1 and 3 in the case that $\gamma\in Z_G(S)$, and we prove statement 2 and 4. The advantage of our method is that it also works in positive characteristic. To be more precise we will proof the following:
\begin{ste}
For all maximal tori $T$ of $G$:
\begin{enumerate}
  \item For all $\epsilon\geq 0$
 \begin{multline*}
 \int_{^GT}|D(x)|^{-\frac12-\epsilon} f(x)dx\\
 =|N_G(T)/T|^{-1}\int_T |D(t)|\int_{G/T}|D(t)|^{-\frac12-\epsilon}f(gtg^{-1})dgdt.
 \end{multline*}
  \item For small $\epsilon> 0$ the function $sd(t)^n|D(t)|^{-\epsilon}$ is locally summable on $T$.
\end{enumerate}
Let $S$ be a maximal $\F$-split torus and $\Phi$ the roots of $S$ and $G$.
\begin{enumerate}
\setcounter{enumi}{2}
 \item For every $g\in G$ there exist a compact neighborhood $\omega$ of $g$, a $C\in\R_{>0}$ and $n\in \N$ such that for all $\gamma\in \omega\cap {^GZ_G(S)}$
\[|\theta(\gamma)|\leq C(ht(\Phi)sd(\gamma))^n|D(\gamma)|^{-\frac12}.\]
\end{enumerate}
If moreover $T \subset Z_G(S)$, then
\begin{enumerate}
\setcounter{enumi}{3}
 \item For every $f\in C^\infty_c(G)$ there exists a $C\in\R_{> 0}$ such that for all regular $t\in T$
 \[\int_{G/T}f(gtg^{-1})dg\leq C|D(t)|^{-\frac12}.\]
\end{enumerate}

\end{ste}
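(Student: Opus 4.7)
Items (1) and (2) I would handle by relatively standard arguments, reserving the main effort for (3). For (1) the plan is to apply the substitution formula to the conjugation map $\Phi\colon G/T \times T^{\mathrm{reg}} \to {}^G T^{\mathrm{reg}}$, $(\bar g, t) \mapsto gtg^{-1}$, which on its image is an $|N_G(T)/T|$-fold covering whose Jacobian at $(\bar g,t)$ equals $|\det(\mathrm{Ad}(t^{-1})-1\mid \gl/\tl)| = |D(t)|$. This identity is purely algebraic and hence characteristic-free; the weight $|D(x)|^{-1/2-\epsilon}$ is conjugation invariant and so passes through the change of variables, giving the stated formula. For (2) I would work in a logarithm-like chart around any $t_0 \in T$ identifying a small compact-open neighbourhood with a lattice in $\tl$; there $|D(t)|$ becomes $\prod_\alpha|\alpha(t)-1|$, a product of finitely many functions vanishing along hyperplanes, and local summability of $\prod_\alpha|\alpha(t)-1|^{-\epsilon}$ for small $\epsilon$ follows by successive integration from the one-dimensional fact that $|x|^{-\epsilon}$ is locally integrable on $\F$. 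Since $sd(t)^n$ grows at most logarithmically in $|D(t)|^{-1}$, it is absorbed into a marginally larger negative power of $|D(t)|$.

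Item (3) is the heart of the theorem, and the place where I expect the real work to be. The strategy is to exploit the Meyer--Solleveld building-theoretic formula for $\theta$ advertised in the abstract, which expresses $\theta(\gamma)$ as an alternating sum of traces of $\gamma$ acting on spaces of invariants $V^{K_F}$, with $F$ ranging over the facets in a controlled region of the Bruhat--Tits building of $G$. For $\gamma \in {}^G Z_G(S)$ the contributing facets lie in the building of $Z_G(S)$, whose combinatorial size in the relevant region can be bounded polynomially in $sd(\gamma)$ with degree and leading constant depending only on $ht(\Phi)$; this is where the factor $(ht(\Phi)\,sd(\gamma))^n$ will arise, as the number of nonzero summands. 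Each individual trace $|\mathrm{tr}(\gamma\mid V^{K_F})|$ must then be bounded by $C|D(\gamma)|^{-1/2}$, using admissibility of $\pi$ together with a building-theoretic comparison between the formal degree on $V^{K_F}$ and the measure of the $K_F$-conjugation orbit through $\gamma$. This last comparison is the analogue of Harish-Chandra's Lie-algebra submersion bound; it is the step I expect to be most delicate, because it is here that the passage to positive characteristic (where the exponential map is unavailable) forces a genuinely geometric, rather than Lie-theoretic, argument.

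For item (4), with $T \subset Z_G(S)$, the plan is to bound the orbital integral by the volume of $\{g \in G/T : gtg^{-1} \in \mathrm{supp}(f)\}$. Since $t$ commutes with $S$, the orbit map factors through $G/Z_G(S) \times Z_G(S)/T$. The first factor can be controlled by a count of facets in the building that is formally parallel to the count carried out in (3) and yields the factor $|D(t)|^{-1/2}$, while the second contributes only a quantity bounded uniformly in $t$ (since $T$ is already cocompact up to bounded data inside $Z_G(S)$). Thus (4) reduces to a combinatorial count of the same flavour as (3). The principal obstacle of the whole project is therefore (3): identifying precisely how the Meyer--Solleveld sum truncates in a neighbourhood of $\gamma \in {}^G Z_G(S)$, and showing that the number of contributing facets together with the per-facet trace estimate both grow only as polynomially-controlled functions of $sd(\gamma)$ and $|D(\gamma)|^{-1/2}$.
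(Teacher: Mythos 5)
Your plan for item (1) matches the paper's (a Weyl integration formula via the substitution rule for analytic manifolds, proved in the appendix), so I will focus on where the proposal breaks down.

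For item (2), the ``logarithm-like chart'' you invoke is precisely what the paper is designed to avoid. For an $\F$-split torus a truncated exponential $a\mapsto 1+\pi a$ suffices, but for a non-split maximal torus $T$ in positive characteristic there is in general no such polynomial (let alone analytic) bijection between an $\co$-lattice in $\tl$ and a neighbourhood of the identity in $T(\F)$. The paper works around this by constructing a generalised norm map $N_{\E/\F}\colon T(\E)\to T(\F)$ for a splitting Galois extension $\E$, parametrising a compact subgroup $\cst\subset T(\F)$ as a surjective polynomial image of $\co_\F^{mn}$, and then proving a counting lemma (Lemma \ref{epimv}) for the sets $\{x\in(\co/\pi^N\co)^{mn}: v(h(x))\geq r\}$ for $h\in\co_\E[x_{11},\dots,x_{mn}]$. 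Without this, your argument for (2) only covers split tori.

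For item (3) the decomposition of the bound is structurally wrong, even though the product of your two intermediate estimates accidentally has the right shape. You want to bound the \emph{number} of contributing facets by a polynomial in $sd(\gamma)$ alone and then bound each individual trace $|\mathrm{tr}(\gamma\mid V^{U^e_\sigma})|$ by $C|D(\gamma)|^{-1/2}$. Neither claim holds. The $\gamma$-fixed simplices are not confined to the apartment (or to the building of $Z_G(S)$): they are of the form $u\sigma$ with $u$ ranging over a unipotent group, and as $\gamma$ approaches a singular element the number of fixed points in a single $U^+$-orbit grows like $|D(\gamma)|^{-1/2}$; this is the content of Theorem \ref{ugrens} and is where the factor $|D(\gamma)|^{-1/2}$ actually enters. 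The polynomial factor $(ht(\Phi)sd(\gamma)+1)^n$ only counts simplices \emph{within} the apartment. Meanwhile each individual trace $|\mathrm{tr}(\gamma\mid V^{U^e_\sigma})|$ is bounded by a \emph{constant} independent of $\gamma$ (Lemma \ref{1lb}), using that $\gamma$ stays in a fixed compact-mod-centre neighbourhood; there is no $|D(\gamma)|^{-1/2}$ on the per-facet level, and the ``formal degree versus orbit measure'' comparison you sketch does not appear and does not seem to have a workable analogue here. You also omit the essential reduction of the non-compact case to the compact case, which the paper carries out via Casselman's theorem on Jacquet modules, the parabolic $P_g$ and Levi $M_g$ contracted by $g$, the displacement function on $B_e$, and a comparison of $D$ and $D_{M_g}$ (Lemma \ref{D/Dm}); this step is not optional, since the proposition is stated for arbitrary $g\in G$.

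For item (4) your heuristic (count volume of $\{g: gtg^{-1}\in\mathrm{supp}\,f\}$ via building combinatorics) is in the right direction, but the factorisation through $G/Z_G(S)\times Z_G(S)/T$ is not what is used, and the assertion that the $Z_G(S)/T$ piece contributes a uniform constant is unjustified. The paper instead compares the integral to a count of $T$-orbits of $\gamma$-fixed points above a fundamental domain in the extended building (Lemmas \ref{bermaat}, \ref{intnaarly}, \ref{y=y'}), carefully separating the $\A_a$- and $Y$-directions, and then again reduces to a Levi $M=M_\gamma$ to handle non-compact $\gamma$. The common engine for (3) and (4) is Theorem \ref{ugrens}, and your proposal does not identify it; this is the central missing idea.
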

Here $sd(\gamma)$ denotes the singular depth of $\gamma$, which measures how singular $\gamma$ is.\\

The first statement follows directly from the Weyl integration formula. This formula is well-known if char $\F=0$, but the author could not find a good reference for the general case. Therefore a proof of the Weyl integration formula is added to this paper in the appendix.\\

As the calculation (\ref{berinto}) shows, we get the following Theorem as consequence.

\begin{ste}
 Let $(\rho,V)$ be a $G$-representation of finite length with character $\theta$ and $f\in C_c^\infty (G)$, then for every torus $T$ containing a maximal $\F$-split torus:
 \[\int_{^GT} f(g)\theta(g)dg<\infty.\]
\end{ste}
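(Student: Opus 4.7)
The plan is to carry out the Harish-Chandra computation displayed in (\ref{berinto}), which is now valid in arbitrary residue characteristic thanks to the previous theorem. Since a torus $T$ containing a maximal $\F$-split torus $S$ automatically lies in $Z_G(S)$, both statements 3 and 4 of the previous theorem apply to ${}^G T$, so all four ingredients are available.

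First I would use statement 3 to control $|\theta|$ on $\mathrm{supp}(f)$. A finite cover of $\mathrm{supp}(f)$ by compact neighborhoods of the type produced by statement 3, together with a subordinate partition of unity, reduces matters to the case where $f$ is supported in a single such neighborhood, so that
$$|\theta(\gamma)| \leq C(ht(\Phi)\, sd(\gamma))^n |D(\gamma)|^{-1/2} \qquad \text{for all } \gamma \in \mathrm{supp}(f) \cap {}^G T.$$

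Next I would apply statement 1 with $\epsilon = 0$ to the majorant $C(ht(\Phi)\, sd(x))^n |D(x)|^{-1/2} |f(x)|$. Using that $sd$ and $D$ are conjugation-invariant, this yields
$$\int_{{}^G T} |f\theta|\, dx \leq \frac{C(ht(\Phi))^n}{|N_G(T)/T|} \int_T sd(t)^n |D(t)|^{1/2}\Bigl(\int_{G/T} |f(gtg^{-1})|\,dg\Bigr) dt.$$
Statement 4 bounds the inner integral by $C_f |D(t)|^{-1/2}$; the factor $|D(t)|^{1/2}$ cancels, and what remains is a constant multiple of $\int_{K_T} sd(t)^n\,dt$, where $K_T \subset T$ is the subset of $t$ whose $G$-orbit meets $\mathrm{supp}(f)$ (compact by standard facts about conjugation in reductive groups).

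Finally, fix small $\epsilon > 0$ so that statement 2 applies; on the compact set $K_T$ the upper bound $|D(t)| \leq M$ gives $sd(t)^n \leq M^\epsilon \cdot sd(t)^n |D(t)|^{-\epsilon}$, and therefore
$$\int_{K_T} sd(t)^n\, dt \leq M^\epsilon \int_{K_T} sd(t)^n |D(t)|^{-\epsilon}\, dt < \infty$$
by statement 2. The main point is really the organization: the four estimates of the previous theorem have been arranged precisely so that they chain together in this computation, and the only nontrivial auxiliary step is the compactness of $K_T$, which is standard. There is no serious obstacle remaining; the substantive work lies entirely in the previous theorem.
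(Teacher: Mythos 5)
Your argument is correct and follows the paper's own chain of estimates: the paper packages the Weyl integration formula, the orbital-integral bound, the compactness of $^G\mathrm{supp}(f)\cap T$, and the local summability of $sd^n|D|^{-\epsilon}$ into Proposition \ref{ontglt}, and then combines it with the character bound (Proposition \ref{afschattingalgemeen}) exactly as you describe. The one small caveat is that the Weyl-integral estimate actually proved (Proposition \ref{Iybound}) carries an extra factor $(ht(\tilde{\Phi})sd(\gamma)+1)^{\dim \A_a}$ rather than the cleaner bound of ``statement 4''; your final step absorbs this harmlessly since Theorem \ref{onsls} holds for every power of $sd$.
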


Assume that $\gamma\in Z_G(S)$ is compact. We use an alternative description of the character, which uses the reduced Bruhat-Tits building of the reductive group $G$, given by Meyer and Solleveld in $\cite{MS10}$ and $\cite{MS12}$, for the local upper-bound of the character. The non-compact case is deduced from the compact case via Casselman's method and the displacement function. For the upper-bound of the Weyl integral $\int_{G/T}f(g\gamma g^{-1})dg$ we use the extended and the reduced buildings. Both estimates are related to the fixed points of $\gamma$ in a reduced building.

After giving a definition and notation for the extended and reduced building, we study the distribution of $\gamma$-fixed points in the reduced building. Then we give an upper-bound for the trace of a representation with finite level. After that section we give an upper-bound for $\int_{G/T}f(gtg^{-1})dg$. In the last section we combine both upper-bounds to a proof of the local summability of $\theta$ on $\{gtg^{-1} : g\in G,t\in T\}$.

Most of the Lemma's and Theorems about the fixed points in the building and the relation between the Weyl integral and the fixed points are inspired by examples such as $SL_2(\F)$ and $GL_3(\F)$.
\section{Notations}
Let $\F$ be a local non-archimedean field with valuation $v : \F^\times \rightarrow \R$, ring of integers $\co$ and uniformizer $\pi$. Define $q$ to be the order of the residue field of $\F$. Let $p$ be the characteristic of $\F$. Let $k$ be an algebraic closure of $\F$.\\
$\cg,\cs,\ct,\cu$ are linear algebraic groups over $\F$ and $G,S,T,U$ are the $\F$-points of these groups respectively. The Lie algebra of a group $G$ is denoted by $\gl$. $\cg$ is a connected reductive group and $\ct$ is a maximal torus in $\cg$.

Let $Z=Z(G)$ be the center of $G$ and $Z(G)^0$ the identity component of $Z$.\\
Let $S$ be a maximal $\F$-split torus of $G$.\\
Let $S_\Delta := S\cap Z(G)^0$, the maximal $\F$-split torus in $Z(G)$.\\
The Weyl group of $S$ is denoted by $W := N_G(S)/Z_G(S)$.\\
For $\omega\subset G$, define $^G\omega := \{ gwg^{-1} : w\in \omega, g\in G\}$.\\
The root system of $(G,S)$ is denoted by $\Phi$. Let $\Phi^+$ be a system of positive roots and $\Delta$ the simple roots of $\Phi^+$. Define on $\Phi^+$ the height function $ht : \Phi^+\rightarrow \N$ as usual:
\[
\begin{array}{rll}
 ht(\alpha)&=1 &\alpha\in\Delta\\
 ht(\alpha+\beta) &= ht(\alpha)+ht(\beta) &\alpha,\beta,\alpha+\beta\in \Phi^+
\end{array}
\]

Let $ht(\Phi) := \max_{\alpha\in\Phi^+} ht(\alpha)$. Define $U^+ := \Pi_{\alpha\in \Phi^+} U_\alpha$ and $U^- :=  \Pi_{\alpha\in \Phi^-} U_\alpha$.\\

Let $\gamma$ be a regular semi-simple element. Let $\E$ be a field extension of $\F$ such that $T := Z_G(\gamma)$ is $\E$-split. Extend the valuation $v$ of $\F$ to $\E$. Let $\tilde{\Phi} := R(\cg , \ct)$. Define the singular depth of $\gamma$ as follows:
\[sd(\gamma) := \max_{\alpha\in \tilde{\Phi}} v(\alpha(t)-1).\]

\section{The extended and the reduced building}
In this section we give a construction of the reduced and the extended building.\\
For any reductive $p$-adic group Bruhat and Tits constructed a reduced and an extended building in \cite{BT72,BT84,Ti79}. We use the notation of \cite{MS12}.
The construction of the building goes as follows:
\begin{enumerate*}
\item we construct the standard apartment: a vector space $\A$ with a $N_G(S)$-action
\item we define for each vector $x\in \A$ a subgroup $U_x<G$
\item the building will be $B(G) := G\times \A / \sim$ where $\sim$ is a equivalence relation defined by:
$$(g,x)\sim (h,y) \Leftrightarrow \exists{n\in N_G(S)}\; [nx=y \text{ and } g^{-1}hn\in U_x].$$
\end{enumerate*}
Let $\A_e := X_*(S)\otimes_\Z \R$ and $\A_a := \left(X_*(S)/X_*(S_\Delta)\right)\otimes_\Z \R.$\\
Define $v:Z_G(S)\rightarrow X_*(S)\otimes_\Z \R$ by:
$$\left< v(z),\chi|_S\right> = -v(\chi(z))$$
for all $\chi\in X^*(Z_G(S))$.\\
Let $z\in Z_G(S)$ act on $\A_e$ by $x\mapsto x+v(z)$.\\
This action can now be extended to $N_G(S)$, $v : N_G(S)\rightarrow \text{Aff}(\A_e)$.

The standard apartment of the extended building is $\A_e$ and $\A_a$ is the standard apartment of the reduced building.\\
Define the linear map $\phi : \A_e\rightarrow \A_a$ by extending the map $X_*(S)\rightarrow X_*(S)/X_*(S_\Delta)$. So $\phi$ is surjective.\\
The action of $N_G(S)$ on $\A_e$ gives an action on $\A_a$ via $\phi$:
 $$n\phi(x) := \phi(nx), \quad \text{for } x\in\A_e,\;n\in N_G(S).$$

For $\alpha\in\Phi$, $x\in \A_a$ and $y\in \A_e$, define
\begin{align*}
 \alpha(y) &:= \left<\alpha,y\right>,\\
 \alpha(x) & := \alpha(z),
\end{align*}
where $z$ is any element in $\phi^{-1}(x)$.\\

Now we continue by defining the subgroups $U_x$.\\
Following \cite{Ti79} we construct subgroups $U_{\alpha,r}$ for $\alpha\in\Phi$ and $r\in \R$. Let $r_\alpha$ be the reflection associated to $\alpha$. Let $u\in U_\alpha-\{1\}$, then \[U_{-\alpha}uU_{-\alpha} \cap N_G(S) = \{m(u)\}.\] Define $r(u) = v(m(u))$. The affine action $r(u)$ is an affine reflection whose vector part is $r_\alpha$. Let $a(\alpha,u)$ denote the affine function on $\A_a$ whose vector part is $\alpha$ and whose vanishing hyper-plane is the fixed point set of $r(u)$. We define
$$ U_{\alpha,r} := \{ u \in U_\alpha \mid u=1\text{ or } a(\alpha,u)\geq \alpha+r\}.$$
In \cite[\S3]{MS12} a more concrete description of the groups $U_{\alpha,r}$ is given:\\
Let $\E$ be a field extension of $\F$ such that $\cg$ is $\E$-split. Extend the valuation $v$ of $\F$ to $\E$. Let $\ct$ be a maximal $\E$-split torus that contains $\cs$. Define $\Phi_\ct := R(\cg,\ct)$. Choose a Chevalley basis on $\gl$, the Lie algebra of $\cg$. Such a basis gives rise to an isomorphism $u_\beta : \E\rightarrow \cu_\beta(\E)$ for all $\beta\in \Phi_\ct$. Define $U_{\beta,r} := v^{-1}([r,\infty))$ for $\beta\in\Phi_\ct$. Let $\rho : \Phi_\ct \rightarrow \Phi$ be the surjection defined by restriction of the character of $\ct$ to $\cs$. For $\alpha\in\Phi^{\text{red}}$ and $r\in\R$ define
\begin{align*}
 U_{\alpha,r} &:= U_\alpha \cap \left(\prod_{\beta\in\rho^{-1}(\alpha)} U_{\beta,r}\times \prod_{\beta\in\rho^{-1}(2\alpha)}U_{\beta,2r}\right),\\
 U_{2\alpha,r} &:= U_{2\alpha}\cap U_{\alpha,r/2}.
\end{align*}

Now $U_x$, for $x\in \A_a$ or $x\in\A_e$, is the subgroup generated by $\bigcup_{\alpha\in\Phi}U_{\alpha,\left<x,-\alpha\right>}$.\\

As announced $B_a(G) := G\times \A_a/\sim$ and $B_e(G) := G\times \A_e/\sim$.\\
The equivalence relation $\sim$ for $B_a(G)$ and $B_e(G)$ is:\\
$(g,x)\sim (h,y)$ iff there is a $n\in N_G(S)$ such that $nx=y$ and $g^{-1}hn\in U_x$.\\

If $\Omega \subset \A_a$ or $\Omega\subset \A_e$ we define 
$$f_\Omega: \Phi \rightarrow \R\cup\{\infty\},\quad f_\Omega(\alpha) := \sup_{x\in\Omega} \left<x,-\alpha\right>. $$
This gives rise to the following subgroups of $G$:
\begin{align*}
 U_\Omega & := \text{ the subgroup generated by } \bigcup_{\alpha\in\Phi^{red}}U_{\alpha,f_\Omega(\alpha)},\\
 N_\Omega & := \{ n \in N_G(S) \mid nx=x \text{ for all } x\in \Omega\},\\
 P_\Omega &:= N_\Omega U_\Omega= U_\Omega N_\Omega.
\end{align*}
The group $P_\Omega$ is the point-wise stabilizer of $\Omega$.\\
If we drop $G$ from the notation of the building it should be clear from the context for which group $G$ the building is: so $B_a = B_a(G)$ and $B_e = B_e(G)$.\\

Now we extend $\phi:\A_e\rightarrow \A_a$ to a function $B_e\rightarrow B_a$ which we also denote by $\phi$. So $\phi(g,x)=(g,\phi(x))$. The function $\phi$ is $G$-invariant and surjective.\\

Let $Y := X_*(S_\Delta)\otimes \R= X_*(Z(G))\otimes \R$. Define $\pi_Y$ and $\pi_{\A_a}$ to be the projections from $\A_a\oplus Y$ to $Y$ and $\A_a$ respectively. Now we have a bijection $\Pi: \A_e\rightarrow \A_a\oplus Y$, such that $\phi = \pi_{\A_e}\Pi$. For $x\in \A_a$ and $y\in Y$ we have $x\oplus y\in \A_a\oplus Y$. We write $(x,y) := \Pi^{-1}(x\oplus y)$ or $x+y = \Pi^{-1}(x\oplus y)$.\\

For $\alpha\in \Phi$ define $n_\alpha$ to be the smallest $r\in \R^+$ such that $U_{\alpha,r} \not= U_{\alpha,r+}$. For $r\in \R$ define the $\alpha$-ceiling as: $\lceil r\rceil_\alpha := \min \{z\in n_\alpha\Z\mid z\geq r\}$. Let $\A$ be equal to $\A_e$ or $\A_a$. The affine hyperplanes
$$\A_{\alpha,k} := \{x\in \A \mid \left<x,\alpha\right>=k\}\quad \text{for }\alpha\in \Phi \text{ and } k\in n_\alpha\Z $$
turn $\A_a$ into a polysimplicial complex. An element $x\in \A_a$ is a vertex if it is the only element of an intersection of such affine hyperplanes. The polysimplicial vertices in $\A_e$ are $(\dim Z(G))$-dimensional hyperplanes. We call $x\in \A_e$ a vertex if it is an element of a polysimplicial vertex of $\A_e$. An element $x\in \A_e$ is a vertex if and only if $\phi(x)$ is a vertex.\\

For each $\Omega \subset B_a$ that is contained in an apartment and each $e\in \R_{\geq 0}$ Schneider and Stuhler defined a group $U_{\Omega}^e$ in \cite{SS97}. This group has the following properties.\\
For a point $x$, a polysimplex $\sigma$ and a general subset $\Omega$ of a apartment, the following hold:
\begin{enumerate}
 \item $U^e_\Omega$ is open if $\Omega$ is bounded.
 \item $U_\Omega^e$ is compact and normal in $P_\Omega$.
 \item if $e\in\Z_{\geq 0}$ and $x$ is in the interior of $\sigma$, then $U_x^e=U_\sigma^e$.
 \item $U_\Omega^e\subset U_\Omega^{e'}$ whenever $e\geq e'$.
 \item $U_\sigma^e$ for $e\in\N$ form a neighborhood basis of $1$ in $G$.
\end{enumerate}
This is a part of \cite[Theorem 5.5]{MS12}.\\
By definition a smooth representation has level greater or equal to $e\in\R_{\geq0}$ if $V=\sum_{x\in B_a} V^{U^{e}_x}$.

Define the bilinear symmetric form $\left<\;,\;\right>$ on $\A_a$ as follows:
$$\left<v,w\right> = \sum_{\alpha\in R^+} \alpha(v)\alpha(w).$$
This form is $W$-invariant, since $s_\alpha R^+ = R^+-{\alpha}\cup \{-\alpha\}$ for all $\alpha\in \Delta$. Let $\Delta^\vee$ be the dual basis of $\Delta$ in $\A_a$. Let $v\in \A_a-\{0\}$. If $v = \sum_{\alpha^\vee \Delta^\vee} c_{\alpha^\vee}\alpha^\vee$ with $c_{\alpha^\vee}\geq 0$, then $\left<v,v\right>>0$. Because the form is $W$-invariant, it is positive definite. Thus $\left<\;,\;\right>$ is a $W$-invariant positive definite symmetric bilinear form.

Choose on $\A_e = \A_a\oplus Y$ a $W$-invariant inner product, such that restricted to $\A_a$ it is equal to $\left<\;,\;\right>$ and $\A_a\perp Y$. Such an inner product exists, because $\A_a$ and $Y$ are $W$-invariant subspaces. This inner product gives rise to a $G$-invariant metric $d$ on $B_e$. Since $\A_a \perp Y$ one has
\[d( x + y , x'+ y') = (d(x,x')^2+d(y,y')^2)^{\frac12}.\]
\section{$\gamma$-fixed points and $D(\gamma)$}
An element $g\in G$ is called compact if and only if it is contained is a subgroup $K$ that is compact modulo $Z(G)$.\\
This section gives a proof of the following Theorem.
\begin{ste}\label{ugrens}
 Let $x,y\in \A_a$ and $\gamma \in Z_G(S)$ be regular and compact, then
 $$\#\{ ux : u\in U^+\cap P_y \mid \gamma ux=ux\} \leq |D(\gamma)|^{-\frac12}.$$
\end{ste}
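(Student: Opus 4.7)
My plan is to rewrite the fixed-point condition as a commutator condition inside $U^+$, and then count the admissible coset representatives one (absolute) root group at a time using the Bruhat--Tits filtration.

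Since $\gamma$ is compact modulo $Z(G)$, its translation vector $v(\gamma) \in \A_e$ lies in the central subspace $Y$, so $\gamma$ acts trivially on the reduced apartment $\A_a$ and in particular $\gamma \in N_x \subset P_x$ for every $x \in \A_a$. The condition $\gamma ux = ux$ therefore reads $u^{-1}\gamma u \in P_x$, and multiplying on the right by $\gamma^{-1} \in P_x$ this becomes
\[ u^{-1}\gamma u \gamma^{-1} \in U^+ \cap P_x, \]
the membership in $U^+$ being automatic because $\gamma$ normalizes every root subgroup. Two elements $u,u' \in U^+ \cap P_y$ produce the same point $ux$ iff $u^{-1}u' \in U^+ \cap P_x$, so the number I must estimate equals the number of cosets $u(U^+ \cap P_x)$ with $u \in U^+ \cap P_y$ satisfying this commutator condition.

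Next I would pass to a field $\E$ splitting $T := Z_G(\gamma)$, set $\tilde{\Phi} := R(G,T)$, and let $\tilde{\Phi}^+$ consist of the absolute roots whose restriction to $S$ lies in $\Phi^+$. Fixing an ordering of $\tilde{\Phi}^+$ by increasing height, Bruhat--Tits theory supplies the set-theoretic product decomposition
\[ U^+ \cap P_z = \prod_{\beta \in \tilde{\Phi}^+} U_{\beta,-\beta(z)} \qquad (z \in \A_a), \]
compatible with the filtration. Each $U_\beta \cong \E$ is one-dimensional and $\gamma$ acts on it by the scalar $\beta(\gamma)$, so a direct computation gives
\[ u_\beta(t)^{-1}\, \gamma\, u_\beta(t)\, \gamma^{-1} = u_\beta((\beta(\gamma)-1) t). \]
Inside $U_{\beta,-\beta(y)}/U_{\beta,-\beta(x)}$, the commutator condition on the $\beta$-coordinate becomes $v((\beta(\gamma)-1)t) \geq -\beta(x)$, which is satisfied by at most $q^{v(\beta(\gamma)-1)}$ cosets of $t$. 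Multiplying over all $\beta \in \tilde{\Phi}^+$ and using $v(\beta(\gamma)) = 0$ (compactness of $\gamma$) to pair up $\pm\beta$ in $D(\gamma) = \prod_{\beta \in \tilde{\Phi}}(\beta(\gamma)-1)$ yields
\[ \prod_{\beta \in \tilde{\Phi}^+} q^{v(\beta(\gamma)-1)} = |D(\gamma)|^{-1/2}, \]
which is exactly the desired bound.

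The main obstacle is that $U^+$ is typically non-abelian, so the ordered factorization $u = \prod_\beta u_\beta$ does not let $u^{-1}\gamma u\gamma^{-1}$ factor cleanly as a product of per-root commutators: Chevalley's commutator formula produces cross-terms $[u_{\beta'}, u_{\beta''}] \in \prod_{a,b\geq 1} U_{a\beta' + b\beta''}$, each living in a strictly higher-height root group. I would resolve this by inducting on height: the lowest-height coordinate of $u^{-1}\gamma u\gamma^{-1}$ is exactly $u_\beta((\beta(\gamma)-1)t_\beta)$, and at each subsequent height the coordinate equals $(\beta(\gamma)-1)t_\beta$ plus an affine shift depending only on the previously-processed $t_{\beta'}$'s with $ht(\beta') < ht(\beta)$. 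Since an affine shift does not change the cardinality of the admissible set of $t_\beta$, the per-root bound $q^{v(\beta(\gamma)-1)}$ persists, and iterating through the height-ordered list of roots produces the product estimate above.
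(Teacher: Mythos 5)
Your opening reduction to a commutator condition, the height-ordered factorization of $U^{+}\cap P_y$, and the induction-on-height strategy to handle Chevalley cross-terms all match the paper's proof in spirit. But there is a genuine gap at the heart of the counting step.

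The product decomposition
\[
U^{+} \cap P_z = \prod_{\beta\in\tilde{\Phi}^{+}} U_{\beta,-\beta(z)}
\]
that you invoke is a statement over the splitting field $\E$, not over $\F$. The group $U^{+}\cap P_y$ that you are actually counting over consists of $\F$-rational points, and for a non-$\F$-split $G$ a relative root group $U_\alpha(\F)$ ($\alpha\in\Phi(G,S)$) does not decompose as a product of one-dimensional absolute root groups $U_\beta$ defined over $\F$ --- those $U_\beta$ only live over $\E$. So the step ``$\gamma$ acts on $U_\beta\cong\E$ by the scalar $\beta(\gamma)$, hence at most $q^{v(\beta(\gamma)-1)}$ cosets'' is not a statement about the $\F$-cosets you need; if you take it literally over $\E$ it counts $\co_\E$-cosets with the wrong residue cardinality $q_\E$, and if you try to restrict to $\F$ the one-dimensional eigenspace picture is simply unavailable, because the $\gamma$-action on $U_\alpha(\F)$ is an $\F$-linear map that is only diagonalizable after base change.

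This is precisely the obstacle that the paper's Lemmas \ref{lattices} and \ref{ualem} are designed to overcome. The paper factors $U^{+}\cap P_y$ over the \emph{relative} roots $\alpha\in\Phi^{+}$ (a decomposition that really does hold over $\F$), realizes each $U_{\alpha,r}/U_{2\alpha,r}$ and $U_{2\alpha,r}$ as an $\co_\F$-lattice via Corollary \ref{visg}, and then bounds the number of lattice cosets mapped back into a sublattice by $|\det|^{-1}$ using the Smith normal form (Lemma \ref{lattices}). The determinant of the commutator action on $U_\alpha/U_{2\alpha}$ is $\prod_{\beta\in\rho^{-1}(\alpha)}(\beta(\gamma)-1)$, which is how the absolute roots re-enter the estimate --- through a determinant over $\F$, not through an eigenbasis over $\F$. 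Your argument is essentially correct and coincides with the paper's in the $\F$-split case (where $\E=\F$ and absolute and relative roots agree), but to cover the general case you need to replace ``one eigenvalue per coordinate'' with a determinant bound on the $\co_\F$-lattice quotient. A secondary, smaller point: your concluding identity $\prod_{\beta\in\tilde{\Phi}^{+}}q^{v(\beta(\gamma)-1)}=|D(\gamma)|^{-1/2}$ should be an inequality $\leq$, since the absolute roots $\beta$ with $\beta|_S=0$ do not appear in your product but do appear in $D(\gamma)$; compactness of $T/S$ gives $v(\beta(\gamma)-1)\geq 0$ for those, which saves the bound but not the equality.
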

In the proof of Theorem \ref{ugrens} we need Lemma \ref{ualem}. Besides some standard facts the proof of Theorem \ref{ugrens} uses only this Lemma, which is trivial when $G$ is $\F$-split. The main part of this section is dedicated to the proof of Lemma \ref{ualem}.\\

First we will discuss some consequences of the following Theorem.\\
Define $\cg_a := (k,+)$ and $G_a := \cg_a(\F)$.
\begin{ste}\label{pro14311}
 Let $\cg$ be a $\F$-split solvable group, $\ct$ be a maximal $\F$-torus of $\cg$ and $\cg_u$ the unipotent radical of $\cg$.
\begin{enumerate}[label=(\alph*)]
 \item There exists a $\F$-isomorphism of varieties $\fhs: \cg_u\rightarrow \cg_a^n$ with $\fhs(e)=0$ and a rational representation $\rho$ of $\ct$ in $k^n$ defined over $\F$ such that\\ $\fhs(tgt^{-1})=\rho(t)\fhs(g)$ for all $g\in \cg$ and $t\in \ct$.
 \item For $x,y\in \cg_a^n$ we have $\fhs(\fhs^{-1}(x)\fhs^{-1}(y))=x+y+\sum_{i\geq 2} F_i(x,y)$, where $F_i : \cg_a^n\times \cg_a^n \rightarrow \cg_a^n$ is a polynomial map of degree $i$.
 \item The weights of $\ct$ for $\rho$ are the weights of $\ct$ in $\gl$.
\end{enumerate}
\end{ste}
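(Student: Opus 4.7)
The plan is to prove all three parts simultaneously by induction on $\dim \cg_u$ using a $\ct$-stable composition series. Since $\cg$ is $\F$-split solvable, $\cg_u$ is $\F$-split unipotent, and the standard structure theory of split solvable groups produces a descending sequence
$$\cg_u = U_n \supsetneq U_{n-1} \supsetneq \cdots \supsetneq U_0 = \{e\}$$
of closed connected $\F$-subgroups, each normal in $\cg_u$ and stable under the $\ct$-conjugation action, with every quotient $U_i/U_{i-1}$ $\F$-isomorphic to $\cg_a$ on which $\ct$ acts via a rational character $\chi_i$ defined over $\F$.

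Assuming inductively that $\fhs_{i-1}:U_{i-1}\to \cg_a^{i-1}$ has been constructed satisfying the conclusion of (a), I would extend it to $\fhs_i$ by choosing a $\ct$-equivariant $\F$-morphism section $\sigma$ of the quotient map $U_i\to U_i/U_{i-1}$ and setting
$$\fhs_i(u) := \bigl(\fhs_{i-1}(u\cdot\sigma(\bar u)^{-1}),\,\bar u\bigr),$$
where $\bar u$ is the image of $u$ in $\cg_a$. A plain $\F$-section exists because $U_i\to U_i/U_{i-1}$ is a $U_{i-1}$-torsor over affine space, and by the inductive identification $U_{i-1}\cong \cg_a^{i-1}$ every such torsor is trivial. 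To make the section $\ct$-equivariant, one uses that the split torus $\ct$ is linearly reductive: the space of $\F$-morphism sections is a rational $\ct$-module, its $\chi_i$-isotypic component is nonempty, and any element of that component is a $\ct$-equivariant section. The equivariance of $\fhs_i$ with respect to $\rho_{i-1}\oplus \chi_i$ is then immediate from the construction.

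For (b), multiplication on $\cg_u$ is a morphism of affine varieties, so via $\fhs$ it becomes a polynomial map $\cg_a^n\times \cg_a^n\to \cg_a^n$. Because $\fhs(e)=0$, there is no constant term, and because each graded piece $U_i/U_{i-1}\cong \cg_a$ inherits the additive law modulo the lower coordinates, the degree-one component of the pulled-back product must be $x+y$; the remaining terms assemble into $\sum_{i\ge 2}F_i(x,y)$ by homogeneous degree. For (c), the weights of $\rho$ on $\cg_a^n$ are by construction exactly $\chi_1,\dots,\chi_n$. The filtration of subgroups induces a $\ct$-stable filtration $0\subsetneq \ul_1\subsetneq\cdots\subsetneq \ul_n=\ul$ on $\ul=\mathrm{Lie}(\cg_u)$ with $\ul_i/\ul_{i-1}=\mathrm{Lie}(U_i/U_{i-1})$ one-dimensional of weight $\chi_i$. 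Since $\cg=\ct\ltimes\cg_u$ for a split solvable group, the nonzero $\ct$-weights on $\gl$ coincide with the weights on $\ul$, hence with those of $\rho$.

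The main obstacle will be the existence of the $\ct$-equivariant section in (a). In characteristic zero one can average over $\ct$ (or use the exponential map) directly, but in positive characteristic one must instead invoke linear reductivity of $\F$-split tori applied to the rational $\ct$-module of $\F$-sections. Once this input is secured, the remainder of the proof is a systematic bookkeeping of the characters $\chi_i$ through the composition series.
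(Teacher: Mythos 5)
The paper offers no proof of this Theorem: the proof given is a single citation to Proposition 14.3.11 of Springer's \emph{Linear algebraic groups} \cite{Sp98}, so there is no in-paper argument to compare against, and you are effectively reconstructing Springer's argument. Your overall plan -- induction along a $\ct$-stable composition series $\{e\}=U_0\subsetneq\cdots\subsetneq U_n=\cg_u$ with $\cg_a$-quotients, with $\ct$-equivariance forced via linear reductivity of the split torus -- is the right one, and your treatment of parts (b) and (c) is fine once (a) is in hand (read (c), as the paper itself does in Corollary \ref{visg}, as describing the $\ct$-weights on $\mathrm{Lie}(\cg_u)$). The gap is in the central step of (a), the existence of a $\ct$-equivariant section $\sigma$ of $U_i\to U_i/U_{i-1}$. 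You assert that the set of $\F$-morphism sections is a rational $\ct$-module and look for something in its $\chi_i$-isotypic component. Neither is right: the set of sections is a torsor under the group $\mathrm{Mor}(U_i/U_{i-1},U_{i-1})$, which is not a vector space when $U_{i-1}$ is nonabelian (nor even when $U_{i-1}$ is abelian but not a vector group, which happens in characteristic $p$, e.g.\ for Witt vectors), so the $\ct$-action on sections is not an affine action over a rational module; and even after a correct linearization, what one must produce is a $\ct$-\emph{fixed} section, a vector in the trivial isotypic component, not the $\chi_i$-component. A smaller imprecision of the same kind appears earlier: the torsor $U_i\to U_i/U_{i-1}$ is trivial because $U_{i-1}$ is $\F$-split unipotent (d\'evissage through its $\cg_a$-filtration together with $H^1(\text{affine},\cg_a)=0$), not because $U_{i-1}\cong\cg_a^{i-1}$ as a variety.

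The gap is repairable by a finer d\'evissage, most cleanly by inducting on the quotient tower $\cg_u^{(j)}:=\cg_u/U_{n-j}$ rather than the subgroup tower. Each surjection $\cg_u^{(j+1)}\to\cg_u^{(j)}$ has kernel $U_{n-j}/U_{n-j-1}\cong\cg_a$, abelian and one-dimensional with $\ct$ acting by a single character, so the set of $\F$-sections is an honest affine space over the rational $\ct$-module $\mathrm{Mor}(\cg_u^{(j)},\cg_a)\cong\F[x_1,\ldots,x_j]$ (identify $\cg_u^{(j)}$ with $\cg_a^j$ via the already-constructed $\fhs^{(j)}$); linear reductivity of the split torus $\ct$, equivalently the vanishing of $H^1(\ct,-)$ on rational modules, then yields a $\ct$-fixed, i.e.\ $\ct$-equivariant, section. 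Building $\fhs^{(j)}$ this way and taking $j=n$ gives the $\fhs$ of part (a), after which your arguments for (b) and (c) go through unchanged. (If you prefer to keep the subgroup tower, the same fix applies: lift the section of $U_i\to U_i/U_{i-1}$ one $\cg_a$-step at a time through the filtration of $U_{i-1}$, applying $H^1(\ct,-)=0$ at each step.)
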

\begin{proof} See Proposition 14.3.11 in \cite{Sp98}. \end{proof}
\begin{gev}\label{visg}
 Let $\cs$ be a $\F$-split torus, $\cu$ be an $\F$-split unipotent group with an algebraic action of $\cs$. Let $n=\dim \cu$. Assume that $\alpha\in X^*(\cs)$ is the only weight for $\cs$ on $\ul$ and that $\alpha$ is non-trivial. Then there is a $\F$-isomorphism $\fhs$ between the groups $\cu$ and $\cg_a^n$ such that $\fhs(sus^{-1})=\alpha(s)\fhs(u)$ for all $s\in \cs$.
\end{gev}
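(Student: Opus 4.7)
The plan is to reduce the corollary to Theorem \ref{pro14311} by packaging $\cs$ and $\cu$ into a single solvable group. Form the semidirect product $\cg := \cu \rtimes \cs$ via the given algebraic action; this is a connected $\F$-split solvable linear algebraic group whose maximal $\F$-torus is $\cs$ and whose unipotent radical is $\cu$.

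Theorem \ref{pro14311}(a), applied with $\ct := \cs$, then furnishes an $\F$-isomorphism of varieties $\fhs : \cu \to \cg_a^n$ and a rational $\F$-representation $\rho$ of $\cs$ on $k^n$ satisfying
\[\fhs(sus^{-1}) = \rho(s)\fhs(u) \qquad \text{for all } s \in \cs,\ u \in \cu.\]
By part (c) of the same theorem, the weights of $\cs$ for $\rho$ are exactly the weights of $\cs$ on $\mathrm{Lie}(\cg) = \mathrm{Lie}(\cs) \oplus \ul$. The adjoint action of $\cs$ on $\mathrm{Lie}(\cs)$ is trivial, so the non-trivial weights on $\mathrm{Lie}(\cg)$ are precisely the $\cs$-weights on $\ul$; since $\dim \rho = n = \dim \ul$, the weights of $\rho$, counted with multiplicity, coincide with those of $\cs$ on $\ul$.

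By hypothesis the only weight of $\cs$ on $\ul$ is the non-trivial character $\alpha$, so every weight of $\rho$ equals $\alpha$. Because $\cs$ is split, the representation $\rho$ is completely reducible, so $\rho(s)$ acts on $k^n$ as the scalar $\alpha(s)$. Substituting gives $\fhs(sus^{-1}) = \alpha(s)\fhs(u)$, which is the identity required.

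I do not anticipate a serious obstacle: Springer's theorem already supplies both the existence of $\fhs$ and the weight correspondence, while the single-weight hypothesis, together with semisimplicity of representations of split tori, forces $\rho$ to collapse to scalar multiplication by $\alpha$. The only point that needs care is verifying that the weights of $\rho$ indeed match those on $\ul$ (rather than accidentally including the trivial weights from $\mathrm{Lie}(\cs)$), which is handled by the dimension count $\dim \rho = \dim \ul$.
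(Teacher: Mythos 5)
Your argument correctly constructs $\cg := \cu\rtimes\cs$, invokes Theorem \ref{pro14311}, identifies the weights, and uses complete reducibility of rational representations of a split torus to conclude that $\rho(s)$ is the scalar $\alpha(s)$. That establishes the equivariance identity. However, there is a genuine gap: the corollary asserts an $\F$-isomorphism of \emph{groups} $\fhs:\cu\to\cg_a^n$, whereas Theorem \ref{pro14311}(a) only supplies an $\F$-isomorphism of \emph{varieties}. You never show that $\fhs$ is a group homomorphism, and this is exactly the content that makes the corollary non-trivial (it is what the subsequent Lemma relies on, where $\fhs$ is used as a ``$\F$-group isomorphism as in Corollary \ref{visg}'').

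The paper closes this gap by invoking part (b) of Theorem \ref{pro14311}: the pulled-back multiplication has the form $\fhs(\fhs^{-1}(x)\fhs^{-1}(y))=x+y+\sum_{i\geq 2}F_i(x,y)$ with $F_i$ homogeneous of degree $i$. Conjugating by $s\in\cs$ and using $\rho(s)=\alpha(s)$ gives
\[\alpha(s)\Bigl(x+y+\sum_{i\geq 2}F_i(x,y)\Bigr)=\alpha(s)x+\alpha(s)y+\sum_{i\geq 2}F_i(\alpha(s)x,\alpha(s)y)=\alpha(s)x+\alpha(s)y+\sum_{i\geq 2}\alpha(s)^iF_i(x,y).\]
Since $\alpha$ is non-trivial, its image in $k^\times$ is infinite, so comparing homogeneous degrees forces $F_i=0$ for $i\geq 2$, hence $\fhs(\fhs^{-1}(x)\fhs^{-1}(y))=x+y$ and $\fhs$ is indeed a group homomorphism. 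You should add this step; without it, the proposal proves strictly less than the stated corollary. (Your extra care about the trivial weights coming from $\mathrm{Lie}(\cs)$ is harmless but unnecessary: in Springer's Proposition 14.3.11 the relevant weights are those on the Lie algebra of the unipotent radical $\cg_u=\cu$, not on all of $\mathrm{Lie}(\cg)$.)
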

\begin{proof} Apply Theorem \ref{pro14311} to $\cg=\cs\ltimes \cu$.\\
Let $\fhs : \cu \rightarrow \cg_a^n$ be an $\F$-isomorphism as in Theorem \ref{pro14311}. Then 
$$\fhs(\fhs^{-1}(x)\fhs^{-1}(y)) = x+y+\sum_{i\geq 2}F_i(x,y),$$
where $F_i(x,y): \cg_a^n\times \cg_a^n\rightarrow \cg_a^n$ is a polynomial map of degree $i$.\\
The weights of $\cs$ for $\rho$ are the weights of $\cs$ in $\gl$.\\
Since the only weight of $\cs$ in $\ul$ is $\alpha$, the weight of $\cs$ for $\rho$ is $\alpha$. Therefore $\rho(s)=\alpha(s)$ for all $s\in \cs$. Also
\begin{align*}
 \fhs(s\fhs^{-1}(x)\fhs^{-1}(y)s^{-1}) &= \rho(s) (x+y+\sum_{i\geq 2}F_i(x,y))\\
\fhs(s\fhs^{-1}(x)s^{-1}s\fhs^{-1}(y)s^{-1})&= \fhs(\fhs^{-1}(\rho(s)x)\fhs^{-1}(\rho(s)y))\\
& = \rho(s)x+\rho(s)y+\sum_{i\geq 2}F_i(\rho(s)x,\rho(s)y).
\end{align*}
Since $im(\rho) \cong k^\times$ and $k$ is infinite, $x+y+\sum_{i\geq 2} F_i(x,y)$ is a homogeneous polynomial map of degree $1$. Therefore $\fhs(\fhs^{-1}(x)\fhs^{-1}(y))=x+y$. So $\fhs$ is a group homomorphism between $\cu$ and $\cg_a^n$.\end{proof}
\begin{lem}
 Let $\cs$ be a maximal $\F$-split torus of the reductive group $\cg$. Let $\ct\subset Z_\cg(\cs)$ be a maximal $\F$-torus, $\alpha\in R(\cg,\cs)$ and $\cu_\alpha$ the unipotent group for $\alpha$. There are group isomorphisms $\fhs_1 : \cu_{\alpha}/\cu_{2\alpha} \rightarrow \cg_a^m$ and $\fhs_2 : \cu_{2\alpha} \rightarrow \cg_a^n$ such that for all $r\geq 0$:
\begin{enumerate}[label=(\alph*)]
  \item $\fhs_1(U_{\alpha,r}/U_{2\alpha,r})$ is an $\co$-lattice in $U_\alpha/U_{2\alpha}$,
  \item $\fhs_2(U_{2\alpha,r})$ is an $\co$-lattice in $U_\alpha$,
  \item The conjugation action of $\ct$ on $\cu_{\alpha}/\cu_{2\alpha}$ (resp. $\cu_{2\alpha}$) gives rise to a rational linear action $\rho_1$ (resp. $\rho_2$) of $\ct$ on $\fhs_1(\cu_{\alpha}/\cu_{2\alpha})$ (resp.  $\fhs_2(\cu_{2\alpha})$). Moreover the weights of $\ct$ for $\rho_1$ (resp. $\rho_2$) are the weights of $\ct$ in $\ul_{\alpha}/\ul_{2\alpha})$ (resp. $\ul_{2\alpha}$).
\end{enumerate}
\end{lem}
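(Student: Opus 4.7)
The approach is to apply Corollary~\ref{visg} twice---once to $\cu_\alpha/\cu_{2\alpha}$ and once to $\cu_{2\alpha}$, exploiting that the $\cs$-action on each has a single weight ($\alpha$ and $2\alpha$ respectively, by the definition of a root subgroup)---and then to bootstrap the resulting $\cs$-equivariance into the full $\ct$-linear action demanded by part~(c). Corollary~\ref{visg} supplies $\F$-isomorphisms of algebraic groups $\fhs_1: \cu_\alpha/\cu_{2\alpha} \to \cg_a^m$ and $\fhs_2: \cu_{2\alpha} \to \cg_a^n$ satisfying $\fhs_1(sus^{-1}) = \alpha(s)\fhs_1(u)$ and $\fhs_2(svs^{-1}) = (2\alpha)(s)\fhs_2(v)$ for all $s\in\cs$.

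For part~(c), since $\ct \subset Z_\cg(\cs)$ the conjugation actions of $\ct$ and $\cs$ commute. Transferring through $\fhs_i$, for every $t \in \ct(k)$ the map $A_t := \fhs_i \circ \mathrm{Int}(t) \circ \fhs_i^{-1}$ is an algebraic-group automorphism of $\cg_a^n$ (resp.\ $\cg_a^m$) that commutes with scalar multiplication by every element of $\alpha(\cs(k)) = k^\times$. Coordinate-wise, any such automorphism is an additive polynomial $\sum_l a_l F^l$ in the Frobenius $F$, and the commutation $A_t(cx) = cA_t(x)$ forces $a_l(c^{p^l}-c)=0$ for all $c\in k^\times$; picking $c$ transcendental over the prime subfield (which exists, as $\F$ has positive transcendence degree over it) yields $a_l=0$ for $l\geq 1$, so $A_t$ is $k$-linear and depends rationally on $t$. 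This is the required rational linear representation $\rho_i$. Differentiating $\fhs_i$ at the identity identifies it with a $\ct$-equivariant linear isomorphism onto the corresponding Lie algebra, so the weights of $\rho_i$ coincide with those of $\ct$ on $\ul_\alpha/\ul_{2\alpha}$, respectively $\ul_{2\alpha}$.

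For parts~(a) and~(b), recall from the construction preceding the lemma that $U_{\alpha,r}$ and $U_{2\alpha,r}$ are intersections of products of the $\co_\E$-lattices $U_{\beta,r}$ coming from the Chevalley parameterizations $u_\beta$; in particular they are compact, open $\co$-submodules of $U_\alpha$ and $U_{2\alpha}$, respectively. Under the $\F$-linear group isomorphism $\fhs_i$, their images are compact, open additive subgroups of $\F^m$, respectively $\F^n$. The $\cs(\F)$-equivariance $\fhs_i(sUs^{-1})=\alpha(s)\fhs_i(U)$ shows the image is stable under multiplication by the subgroup $\alpha(\cs(\F))\subset\F^\times$, which has finite index and contains uniformizer powers; together with openness and boundedness this forces the image to be an $\co$-lattice of full rank. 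The main obstacle I anticipate is pinning down that the $\F$-linear structure $\fhs_i$ places on $U_\alpha/U_{2\alpha}$ agrees well enough with the one coming from the Chevalley parameterization to make the lattice identification canonical; given the single-weight rigidity this rescaling ambiguity is inessential.
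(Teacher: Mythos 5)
Your plan for part~(c) is sound and, in spirit, matches an implicit step in the paper's proof: the paper also passes through the observation that an $\cs$-equivariant algebraic automorphism of $\cg_a^m$ is necessarily linear (the paper uses this to conclude that $\fhs_\cs\fhs_\ct^{-1}$ is $\E$-linear). Your additive-polynomial/Frobenius argument makes that step explicit, which is a reasonable way to fill it in, and the weight comparison via differentiation at the identity is fine.

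The gap is in parts~(a) and~(b). You assert that the $\cs(\F)$-equivariance $\fhs_i(sus^{-1})=\alpha(s)\fhs_i(u)$ shows the image $L_r := \fhs_i(U_{\alpha,r}/U_{2\alpha,r})$ is ``stable under multiplication by $\alpha(\cs(\F))$.'' This is false, because $U_{\alpha,r}$ is \emph{not} stable under conjugation by $\cs(\F)$: conjugation by $s$ sends $U_{\alpha,r}$ to $U_{\alpha,\,r+v(\alpha(s))}$, so the equivariance relation gives $\alpha(s)L_r = L_{r+v(\alpha(s))}$, not $\alpha(s)L_r = L_r$. At best you obtain stability of $L_r$ under $\alpha(\cs(\F))\cap\co^\times$, which is only a finite-index subgroup of $\co^\times$, and a compact open additive subgroup of $\F^m$ stable under a finite-index subgroup of $\co^\times$ need not be an $\co$-module (e.g.\ $\F_p + t\,\F_{p^2}[[t]]$ inside $\F_{p^2}((t))$ is a compact open subgroup stable under $\F_p^\times(1+t\F_{p^2}[[t]])$ but not under $\co^\times$). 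Your closing remark that the discrepancy between $\fhs_i$ and the Chevalley parameterization is a mere ``rescaling'' and hence inessential is exactly where the difficulty hides: the two maps differ by an element of $\mathrm{GL}_m(\E)$, not of $\mathrm{GL}_m(\F)$, so the lattice property does not transfer automatically. The paper's proof resolves this by taking $\fhs_1$ to be the Chevalley-basis parameterization $\fhs_\ct$ (so that the image of $U_{\alpha,r}(\E)/U_{2\alpha,r}(\E)$ is manifestly an $\co_\E$-lattice) and then using the Corollary~\ref{visg} map $\fhs_\cs$ only to endow the target with the correct $\F$-structure, via the $\E$-linear map $\fhs_\cs\fhs_\ct^{-1}$; the lattice in part~(a) is then the intersection of an $\co_\E$-lattice with these $\F$-points. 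You would need some such interplay between the two parameterizations; the single map produced by Corollary~\ref{visg} alone does not give you (a) and (b) by equivariance.
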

\begin{proof} We will only consider the case with $\fhs_1 : \cu_\alpha/\cu_{2\alpha,2r}\rightarrow \cg_a^m$. The proof with $\fhs_2$ goes analogously.\\

Let $\E$ be a finite field extension of $\F$ such that $\ct$ is $\E$-split. The group $\cu_\alpha$ is stable under conjugation with $\ct$, since $\ct\subset Z_\cg(\cs)$. Define $\fhs_\cs : \cu_\alpha/\cu_{2\alpha} \rightarrow \cg_a^m$ to be a $\F$-group isomorphism as in Corollary \ref{visg}. Let $\{\beta_1, \ldots,\beta_m\}$ be the subset of the roots of $\cg$ relative to $\ct$ such that $\beta_i|_\cs=\alpha$. Define $\fhs_\ct : \cu_\alpha/\cu_{2\alpha} \rightarrow \cg_a^m$ by its inverse: $\fhs_\ct^{-1}(x_1,\ldots,x_m) := \prod_{i=1}^m u_{\beta_i}(x_i) \mod \cu_{2\alpha}$ where the $u_{\beta_i} : \cg_a\rightarrow \cu_\beta$ are chosen in such a way that 
$$U_{\alpha,r}/U_{2\alpha,r} := \{ \prod_{i=1}^m u_{\beta_i}(x_i) \mod U_{2\alpha}: v(x_i)\geq r \} \cap U_\alpha/U_{2\alpha}.$$ The map $\fhs_\cs\fhs_\ct^{-1} : \cg_a^m\rightarrow \cg_a^m$ is an $\E$-group isomorphism. Because $\fhs_\cs\fhs_\ct^{-1}$ preserves the action of $\cs$, it is also an $\E$-linear map. Therefore there is a $\F$-structure on $\cg_a^m$ (in the sense of vector spaces) such that $\fhs_\ct$ is an $\F$-isomorphism between $\cg_a^n(\F)$ and $U_\alpha$. The group $\fhs_\ct(U_{\alpha,r}(\E)/U_{2a,r}(\E))$ is an $\co_\E$-lattice. So $$\fhs_\ct(U_{\alpha,r}/U_{2\alpha,r})= (\fhs_\ct(U_{\alpha,r}(\E)/U_{2\alpha,r}(\E))\cap \cg_a^m(\F)$$
is an $\co$-lattice.\\
The rank of the $\co$-lattice is $m$:\\
For all $x\in \E, \beta_i$ and $r\in \R$ one has
$$u_{\beta_i}(x)\in U_{\beta_i,r} \Leftrightarrow u_{\beta_i}(\pi x) \in U_{\beta_i,r+1}.$$
Since multiplication with $\pi$ respects the $\F$-structure on $\cg_a^m(\F)$ one has \[[U_{\alpha,r}U_{2\alpha}/U_{2\alpha} : U_{\alpha,r+1}U_{2\alpha}/U_{2\alpha}] = q^l,\] where $l$ is the rank of the $\co$-lattice $\fhs_1(U_{\alpha,r}/U_{2\alpha})$.\\
For all $\beta_i$ one has $\bigcup_{r\in\R} U_{\beta_i,r} = U_{\beta_i}$ and $U_{\beta_i,r} \leq U_{\beta_i,s}$ whenever $s \leq r$. Therefore also $\bigcup_{r\in \R} U_{\alpha,r}U_{2\alpha}/U_{2\alpha} = U_\alpha/U_{2\alpha}$. Because the rank of $U_{\alpha,r}$ is the same for all $r\in \R$ one has that the rank of $U_{\alpha,r}$ is $m$.\\
By construction of $\fhs_\ct$ the weights of $\rho_1$ are the same as the weights of the conjugation action of $\ct$ on $\ul_{\alpha}/\ul_{2\alpha}$. \end{proof}
\begin{lem}\label{lattices}
 Let $L'<L$ be $\co$-lattices in $\F^n$ (of rank $n$) and $M\in GL_n(\F)$ such that $ML'<L'$ and $ML < L$. Let $v\in L$, then
$$\#\{ l\in L/L' : Ml+v \in L'\} \leq |\det M|^{-1}.$$
\end{lem}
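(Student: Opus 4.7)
The plan is to recognise the set being counted as a fibre of a well-defined affine self-map of the finite abelian group $L/L'$, and then to bound the size of the associated linear kernel via the index formula $[L:ML]=|\det M|^{-1}$.

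First I would note that the assignment $\psi:L\to L$, $l\mapsto Ml+v$, is well-defined (using $ML\subseteq L$ and $v\in L$), and that it descends to an affine map $\bar\psi:L/L'\to L/L'$ thanks to the hypothesis $ML'\subseteq L'$: changing $l$ by an element of $L'$ shifts $\psi(l)$ by an element of $ML'\subseteq L'$. The quantity we must estimate is $|\bar\psi^{-1}(0)|$.

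Writing $\bar\psi(\bar l)=\bar M\bar l+\bar v$ with $\bar M$ the endomorphism of $L/L'$ induced by $M$ and $\bar v=v+L'$, one sees that $\bar\psi^{-1}(0)$ is either empty — in which case the inequality is trivial — or a coset of $\ker\bar M$, in which case it has cardinality exactly $|\ker\bar M|$. The first isomorphism theorem for finite abelian groups gives
\[
 |\ker\bar M|\;=\;\frac{[L:L']}{[ML+L':L']}\;=\;[L:ML+L'].
\]

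Finally, since $ML\subseteq ML+L'\subseteq L$, the tower formula yields $[L:ML+L']\leq[L:ML]$, and the standard computation $[L:ML]=|\det M|^{-1}$ (which follows from the scaling property $\mu(M\Omega)=|\det M|\mu(\Omega)$ of any Haar measure on $\F^n$, together with $\mu(L)/\mu(ML)=[L:ML]$) completes the proof. I do not expect any real obstacle here; the only point requiring care is the well-definedness of $\bar\psi$ as a map $L/L'\to L/L'$, and that is exactly what the two containment hypotheses $ML\subseteq L$ and $ML'\subseteq L'$ are arranged to guarantee.
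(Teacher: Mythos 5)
Your argument is correct, and it takes a genuinely different (and cleaner) route than the paper's. The paper first reduces to the linear case by shifting away $v$, then overcounts by replacing $L/L'$ with the larger quotient $\pi^{-n}L'/L'$ (for $n$ large enough that $L\subseteq\pi^{-n}L'$), and finally invokes Smith normal form of $M$ over $\co$ to count solutions of the diagonalised congruence coordinate by coordinate. You instead observe that the set in question is a fibre of the affine map $\bar\psi$ on $L/L'$, hence empty or a coset of $\ker\bar M$; the first isomorphism theorem then gives $|\ker\bar M|=[L:ML+L']$, which the tower formula bounds by $[L:ML]=|\det M|^{-1}$. The two proofs ultimately rest on the same index computation $[L:ML]=|\det M|^{-1}$ (the paper realises it through the diagonal entries of the Smith form; you cite the Haar-measure scaling property), but yours avoids both the auxiliary lattice $\pi^{-n}L'$ and Smith normal form entirely, and it makes transparent exactly where the slack appears, namely in dropping $L'$ from $ML+L'$. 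One small stylistic point: it is worth stating explicitly that $\ker\bar M=(M^{-1}L'\cap L)/L'$ and $\operatorname{im}\bar M=(ML+L')/L'$ before applying the isomorphism theorem, so a reader can see the finite-index tower $ML\subseteq ML+L'\subseteq L$ without pausing.
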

\begin{proof} We may assume that there exists at least one $l\in L/L'$ such that $Ml+v\in L'$. Take $n\in \N$ such that $L<\pi^{-n} L'$. Then
\begin{align*}
 \#\{ l\in L/L' : Ml+v \in L'\} &= \#\{l\in L/L' : Ml\in L'\}\\
 &\leq \#\{ l\in \pi^{-n} L'/L' : Ml \in L'\}.
\end{align*}
We will now estimate the last number.\\
Take a basis for $L'$.\\
Let $D$ be the Smith-normal form of $M$ with respect to $L'$, i.e. there are $P,Q\in GL(L')$ such that $PMQ=D$ and $D$ is a diagonal form. Then
\begin{align*}
 \#\{ l\in \pi^{-n} L'/L': Ml \in L'\} &= \#\{ l\in \pi^{-n} L'/L' : Dl \in L'\}\\
 &\leq |\det D|^{-1} = |\det M|^{-1}.
\end{align*}
The inequality follows from the fact that for all $c\in \F^\times$, the number of $a\in \co/\pi^n\co$ such that $ca\equiv 0 \mod \pi^n$ is bounded by $q^{v(c)} = |c|^{-1}$. \end{proof}
For $q,r\in \R$, define $exp_q(r) := q^r$. Recall that $T$ is a maximal torus of $G$ containing a maximal split torus $S$.
\begin{lem}\label{ualem}
 Let $t\in T$ be compact.\\
 Let $r,s\in \R$ and $r<s$.\\
 Let $V$ be a set of representatives for the cosets of $U_{2\alpha,s}$ in $U_{2a,r}$ and $U$ be a set of representatives for the cosets of $U_{\alpha,s}U_{2\alpha,r}$ in $U_{\alpha,r}$.
 \begin{enumerate}[label=(\alph*)]
 \item $\{ uv : u\in U,\;v\in V\}$ is a set of representatives for the cosets of $U_{\alpha,s}$ in $U_{\alpha,r}$.
 \item For $w,w'\in U_{\alpha,r}$ one has
 \end{enumerate}
 \[\#\{ (u,v) \in U\times V  \mid w'[(uv)^{-1},t]w \in U_{\alpha,s} \} \leq exp_q\left(\sum_{\beta\in \rho^{-1}\{\alpha,2\alpha\}} v(\beta(t)-1) \right).\]
\end{lem}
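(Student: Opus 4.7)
The idea is to split the condition $w'[(uv)^{-1},t]w\in U_{\alpha,s}$ into a piece modulo $U_{2\alpha}$ and a piece inside $U_{2\alpha}$, and to apply Lemma~\ref{lattices} to each via the coordinate isomorphisms $\fhs_1,\fhs_2$ and the associated linear $T$-actions $\rho_1,\rho_2$ from the preceding lemma.

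For (a), I would use that $U_{2\alpha}$ is central in $U_\alpha$, so in particular $U_{2\alpha,r}$ is centralized by $U_{\alpha,s}$. This yields the short exact sequence
$$1\to U_{2\alpha,r}/(U_{\alpha,s}\cap U_{2\alpha,r})\to U_{\alpha,r}/U_{\alpha,s}\to U_{\alpha,r}/(U_{\alpha,s}U_{2\alpha,r})\to 1,$$
in which the left-hand term equals $U_{2\alpha,r}/U_{2\alpha,s}$ with the filtration conventions of the paper. The map $(u,v)\mapsto uv$ from $U\times V$ into $U_{\alpha,r}/U_{\alpha,s}$ is then a set-theoretic section: surjectivity is immediate, and injectivity follows because centrality of $U_{2\alpha}$ lets one split a congruence $uv\equiv u'v'\pmod{U_{\alpha,s}}$ into the independent congruences $u\equiv u'\pmod{U_{\alpha,s}U_{2\alpha,r}}$ and $v\equiv v'\pmod{U_{2\alpha,s}}$.

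For (b), the key computation, using centrality of $U_{2\alpha}$ in $U_\alpha$ together with $T$-stability of both groups, is
$$[(uv)^{-1},t]=[u^{-1},t]\cdot[v^{-1},t],$$
where $[u^{-1},t]\in U_\alpha$ and $[v^{-1},t]\in U_{2\alpha}$ is central in $U_\alpha$, hence commutes past $w$:
$$w'[(uv)^{-1},t]w=(w'[u^{-1},t]w)\cdot[v^{-1},t].$$
The image modulo $U_{2\alpha}$ thus constrains $u$ alone: via $\fhs_1$ it becomes $(\rho_1(t)-I)\fhs_1(\bar u)+c_1\in \fhs_1(\overline{U_{\alpha,s}})$ for a constant $c_1$ depending only on $w,w'$. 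Compactness of $t$ gives $\beta(t)\in\co^\times$ for every root $\beta$, so $\rho_1(t)$ preserves the relevant $\co$-lattices; Lemma~\ref{lattices} with $M=\rho_1(t)-I$ then bounds the number of valid $u$'s by
$$|\det(\rho_1(t)-I)|^{-1}=\prod_{\beta\in\rho^{-1}(\alpha)}q^{v(\beta(t)-1)},$$
using that by the preceding lemma the eigenvalues of $\rho_1(t)$ are exactly the $\beta(t)$ for $\beta\in\rho^{-1}(\alpha)$.

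For each such $u$, the residual condition $(w'[u^{-1},t]w)\cdot[v^{-1},t]\in U_{\alpha,s}$ lies inside $U_{2\alpha}$ and, via $\fhs_2$, reads $(\rho_2(t)-I)\fhs_2(v)+c_2\in\fhs_2(U_{2\alpha,s})$ for some $c_2$. A second application of Lemma~\ref{lattices}, now with $M=\rho_2(t)-I$ whose eigenvalues are $\{\beta(t):\beta\in\rho^{-1}(2\alpha)\}$, bounds the number of such $v$ by $\prod_{\beta\in\rho^{-1}(2\alpha)}q^{v(\beta(t)-1)}$. Multiplying the two bounds gives the claimed estimate. The main obstacle, I expect, is the careful bookkeeping in this mod-$U_{2\alpha}$/inside-$U_{2\alpha}$ decomposition: verifying that the affine constants $c_1,c_2$ arising from $w,w'$ truly lie in the ambient lattices so that Lemma~\ref{lattices} applies without boundary adjustments, and checking that the filtration identities such as $U_{\alpha,s}\cap U_{2\alpha,r}=U_{2\alpha,s}$ hold with the paper's indexing.
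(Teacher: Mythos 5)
Your proposal follows essentially the same strategy as the paper's proof: split the condition modulo $U_{2\alpha}$, bound the number of admissible $u$'s by applying Lemma~\ref{lattices} to the linear action $\fhs_1(u)\mapsto\fhs_1([u,t])$ whose determinant is $\prod_{\beta\in\rho^{-1}(\alpha)}(\beta(t)-1)$, then for each such $u$ use centrality of $U_{2\alpha}$ and the identity $w'[(uv)^{-1},t]w=(w'[u^{-1},t]w)[v^{-1},t]$ to reduce to a second lattice count inside $U_{2\alpha}$, and multiply. The paper itself does not spell out part (a) or the filtration identity $U_{\alpha,s}\cap U_{2\alpha,r}=U_{2\alpha,s}$, which you correctly flag as the remaining bookkeeping to verify.
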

\begin{proof} Define $\fhs : U_\alpha \rightarrow U_\alpha/U_{2\alpha}$ to be the quotient map.\\
We first prove the following:
$$\#\{ u \in U  \mid \fhs(w'[u^{-1},t]w) \in \fhs(U_{\alpha,s}) \} \leq exp_q\left(\sum_{\beta\in \rho^{-1}(\alpha)} v(\beta(t)-1) \right).$$
The set $\fhs(U_{\alpha,s})$ is an $\co$-lattice and $\fhs(u)\mapsto \fhs([u,t])$ is a linear action on the lattice. Since this action has determinant $\prod_{\beta\in \rho^{-1}(\alpha)} (\beta(t)-1)$, the inequality follows from Lemma \ref{lattices}.\\

Now we prove that for every $u\in U$:
$$\#\{ v \in V  \mid w'[((uv)^{-1},t]w \in U_{\alpha,s} \} \leq exp_q\left(\sum_{\beta\in \rho^{-1}(2\alpha)} v(\beta(t)-1) \right).$$

We may assume that $\fhs(w'[u^{-1},t]w)\in \fhs(U_{\alpha,s})$.\\
So $w'[u^{-1},t]w = u_\alpha v'$ with $u_\alpha\in U_{\alpha,s}$ and $v'\in U_{2\alpha}$. Since $U_{2\alpha}$ is in the center of $U_{\alpha}$ and stable under conjugation with $t$, one has
$$w'[(uv)^{-1},t]w = w'[u^{-1},t]w [v^{-1},t] =u_\alpha v'[v^{-1},t].$$
The latter is in $U_{\alpha,s}$ if and only if $v'[v^{-1},t]\in U_{2\alpha,s}$. So
$$\{ v \in V  \mid w'[((uv)^{-1},t]w \in U_{\alpha,s} \}=\{ v \in V  \mid v'[v^{-1},t]\in U_{2\alpha,s}\}. $$
One gets the upper-bound for the number of $v$'s in the last set in the same way as in the case with $U$.\\
Combining both upper-bounds results in the upper-bound of the Lemma.
\end{proof}

{\it \noindent Proof of Theorem \ref{ugrens}.} Since $\gamma$ is compact, it fixes $\A_a$ pointwise.\\
Let $\Phi^+ = \{\alpha_1,\ldots , \alpha_k\}$ be the positive roots associated with $U^+$. Write $u\in U^+\cap P_y$ as $u = \prod_{i=1}^k u_{\alpha_i}$. Now $\gamma ux= ux$ if and only if $[u^{-1},\gamma]_{\alpha_i} \in U_{\alpha_i, -\alpha_i(x)}$ for all $i$. We will count the number of fixed points in the orbit of $x$ under $U^+\cap P_y$.\\
Let $R_\alpha$ be a set of representatives of the cosets $U_{y,-\alpha(y)}/U_{x,-\alpha(x)}$ for each $\alpha\in\Phi^+$.\\
We use the following bijection between $\prod_{\alpha\in \Phi^+} R_\alpha$ and $\{ ux : u\in U_y^+\}$:
$$(u_\alpha)_{\alpha\in\Phi^+} \mapsto \left(\prod_{\alpha\in\Phi^+}u_\alpha\right) x .$$

Let $v\in U^+_y$.\\
CLAIM: The number of $v_\beta\in U_{\beta,-\beta(y)}/U_{\beta,-\beta(x)}$ such that there is a $u$ with $u_\alpha = v_\alpha$ for the roots $\alpha$ with $ht(\alpha)<ht(\beta)$, $u_\beta = v_\beta$ and $\gamma ux=ux$ is bounded by $exp_q\left(\sum_{\tilde{\beta}\in \rho^{-1}(b),\;\tilde{\beta}\in \rho^{-1}(2b)} v(\tilde{\beta}(t)-1)\right) $.\\

If $\gamma ux=ux$, then $[u^{-1},\gamma]_\beta \in U_{\beta,-\beta(x)}$. Now $[u^{-1},\gamma]_\beta = w[u^{-1},\gamma]w'$, where $w,w'\in U_\beta$ only depend on the $u_\alpha$ with $ht(\alpha)<ht(\beta)$. Hence by Lemma \ref{ualem} the number of $v_\beta$'s is bounded by $exp_q\left(\sum_{\tilde{\beta}\in \rho^{-1}(\beta),\;\tilde{\beta}\in \rho^{-1}(2\beta)} v(\tilde{\beta}(t)-1)\right) $.\\

The claim allows us to prove with induction on the height of the roots that $$|\{ u \in \prod_{\alpha\in\Phi^+} R_\alpha \mid \gamma ux=ux\}|\leq exp_q\left(\sum_{\beta\in \rho^{-1}(2\Phi^+), \beta\in\rho^{-1}(\Phi^+)} v(\beta(t)-1)\right).$$
Since $T/S$ is compact, $v(\alpha(t)-1)\geq 0$ for all $\alpha\in R(G,T)$ with $\alpha|_S=0$. 
Thus
\begin{multline*}
 |\{ ux : u\in U^+\cap P_y \mid \gamma ux=ux\}|=|\{ u \in \prod_{\alpha\in\Phi^+} R_\alpha \mid \gamma ux=ux\}|\\
 \leq exp_q\left(\sum_{\beta\in \rho^{-1}(2\Phi^+), \beta\in\rho^{-1}(\Phi^+)} v(\beta(t)-1)\right)
 \leq |D(\gamma)|^{-\frac12}
\end{multline*}
So $\{ ux : u\in U^+\cap P_y \mid \gamma ux=ux\}$ has at most $|D(\gamma)|^{-\frac12}$ points.\hfill $\square$
\section{An upper-bound for the character}
The first part of this section up to and including Theorem \ref{lctr} is essentially in \cite{MS10} and \cite{MS12}.\\
Let $(\rho,V)$ be an admissible $G$-representation of level $e$.\\
For a open compact subgroup $K$ of $G$ we denote $1_K$ for the indicator function of $K$ in $G$ and $\left<K\right> := \frac{1_K}{\text{vol}(K)}$.\\
Let $B$ be the reduced building of $G$ and $\A$ be the standard apartment of $S$ in $B$. Define $\oo$ to be the origin of $\A$. For a finite subcomplex $\Sigma \subset B$ and $g\in G$ define
\begin{align*}
u^e_\Sigma &:= \sum_{\sigma\in\Sigma} (-1)^{\dim \sigma}
\left<U_\sigma^e\right>,\\
\tau_\Sigma(g) &:= \sum_{\sigma\in\Sigma^g} (-1)^{\dim
\sigma}\epsilon_\sigma(g) tr(\rho(g),V^{U_\sigma^e}),
\end{align*}
where $\Sigma^g$ is the set of $g$-stable polysimplices in $\Sigma$ and $\epsilon_\sigma(g)$ is $1$ if $g$ preserves the orientation of $\sigma$ and $-1$ otherwise. For $r\in\R$ define
\begin{align*}
\A^{\alpha+}_r &:= \{ x\in \A \mid \alpha(x)>r\},\\
\A^{\alpha0}_r &:= \{ x\in \A \mid \alpha(x)\in [-r,r]\},\\
\A^{\alpha-}_r &:= \{ x\in \A \mid \alpha(x)<-r\}.
\end{align*}
For any map $\epsilon:\Phi \rightarrow \{+,0,-\}$ we write 
$$ \A^\epsilon_r := \bigcap_{\alpha\in\Phi}
\A^{\alpha\,\epsilon(\alpha)}_r.$$
Let $\A^b_r$ be the union of the bounded $\A^\epsilon_r$. Define $B_r := P_\oo\A^b_r$.
\begin{lem}\cite[Lemma 8.2]{MS12}\label{colarg}
 Let $r\in\Z_{\geq e}$ and let $\Sigma\subset B$ be any finite convex
subcomplex that contains $B_{r-e}$. Then
 $$\left<U^r_\oo\right> u_\Sigma^e = \left<U^r_\oo\right>u_{B_{r-e}}^e.$$
\end{lem}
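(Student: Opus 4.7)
The plan is a combinatorial-cancellation argument. Rewriting
\[u^e_\Sigma - u^e_{B_{r-e}} = \sum_{\sigma \in \Sigma \setminus B_{r-e}} (-1)^{\dim \sigma} \langle U^e_\sigma \rangle,\]
I want to show the signed sum on the right vanishes after left-convolution with $\langle U^r_\oo\rangle$. The strategy splits into two parts: first, identify the convolution $\langle U^r_\oo\rangle \langle U^e_\sigma\rangle$ as explicitly as possible for $\sigma \not\subset B_{r-e}$; second, exhibit a cancellation mechanism for the resulting signed sum.

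For the first step, recall that the convolution of two normalised indicator functions of open compact subgroups $K_1, K_2$ is supported on $K_1 K_2$ and, when $K_1 K_2$ is itself a group, simplifies to the normalised indicator of that product group up to volume factors. The key geometric input is that the definition of $B_{r-e} = P_\oo \A^b_{r-e}$ is precisely designed so that for every $\sigma \subset B \setminus B_{r-e}$ the filtration subgroup $U^e_\sigma$ has enough intersection with $U^r_\oo$ in some unbounded root direction at $\sigma$ that $U^r_\oo \cdot U^e_\sigma$ closes up as a product of filtration groups controlled by $\langle U^r_\oo\rangle$ alone. Using the Iwahori-type factorisation of $U^e_\sigma$ dictated by the roots $\alpha$ with $\alpha(\sigma)>r-e$ (or $<-(r-e)$), I would derive that $\langle U^r_\oo\rangle \langle U^e_\sigma\rangle$ depends only on coarse data of $\sigma$, namely on the sectorial region containing $\sigma$ in $\A$ modulo $P_\oo$. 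After conjugating by $P_\oo$ (which normalises $\langle U^r_\oo\rangle$), one can reduce to the case $\sigma\subset \A$.

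The second step is then an Euler-characteristic-type identity: because $\Sigma$ and $B_{r-e}$ are both finite convex contractible subcomplexes of the building $B$ with $B_{r-e} \subset \Sigma$, and because each sectorial slab of $\Sigma \setminus B_{r-e}$ is itself contractible with contractible trace on $B_{r-e}$, the alternating-sign count $\sum_\sigma (-1)^{\dim\sigma}$ restricted to each class of $\sigma$'s giving the same convolution should vanish; convexity and finiteness of $\Sigma$ are what prevent boundary terms from surviving. The main obstacle I anticipate is the first step: verifying the Schneider--Stuhler factorisation at a generic simplex outside $B_{r-e}$ and confirming that $U^r_\oo \cdot U^e_\sigma$ really is a group. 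Once that is in place, the Euler-characteristic step is essentially formal, and the lemma follows.
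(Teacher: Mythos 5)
The paper does not prove this lemma itself; it simply cites \cite[Lemma 8.2]{MS12}, so the comparison here is against that reference rather than against any argument in the present article.

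Your framework --- cancel the contributions of the polysimplices of $\Sigma$ outside $B_{r-e}$ after convolving with $\langle U^r_\oo\rangle$ --- is the right one, but both of your steps have genuine gaps. The first-step claim that for $\sigma\not\subset B_{r-e}$ the set $U^r_\oo \cdot U^e_\sigma$ ``closes up'' as a group is false in general. Already for $G=SL_2$ with $\sigma$ a vertex at distance $m$ from $\oo$ along the standard apartment: $U^r_\oo$ contains $U_{\alpha,r}$, $U^e_\sigma$ contains $U_{-\alpha,e-m}$, and for $m>r$ the product $U_{-\alpha,e-m}\,U_{\alpha,r}$ produces torus components at depth $r+e-m<e$, which lie in neither $U^r_\oo$ nor $U^e_\sigma$; so $U^r_\oo U^e_\sigma\neq U^e_\sigma U^r_\oo$ and the product set is not a group. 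Consequently $\langle U^r_\oo\rangle\langle U^e_\sigma\rangle$ is not the normalised indicator of any explicit product group, and the reduction of the convolution to ``sectorial data'' does not follow from the mechanism you describe.

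The second step is also too loose. Even granting that $\langle U^r_\oo\rangle\langle U^e_\sigma\rangle$ depends only on some coarse invariant of $\sigma$, ``each sectorial slab is contractible'' does not give $\sum_\sigma(-1)^{\dim\sigma}=0$ on a given class: $\Sigma\setminus B_{r-e}$ is not a subcomplex (it is open), and contractibility of a region of $\A$ says nothing about the signed count of polysimplices of $\Sigma$ lying in it. What the argument actually needs is a concrete cancellation device at the level of polysimplices, e.g.\ a fixed-point-free involution (equivalently a chain homotopy) on the polysimplices of $\Sigma$ not contained in $B_{r-e}$, pairing each such $\sigma$ with a face or coface $\tau$ of complementary parity for which $\langle U^r_\oo\rangle\langle U^e_\sigma\rangle = \langle U^r_\oo\rangle\langle U^e_\tau\rangle$; convexity of $\Sigma$ and of $B_{r-e}$ is precisely what keeps this pairing from leaving $\Sigma\setminus B_{r-e}$. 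That, roughly, is the route taken in \cite{MS12}. Your sketch does not identify such a pairing, and I do not see how to extract one from the ``sectorial slab'' picture without essentially redoing the argument from scratch; as written the proof does not close.
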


Take $r$ such that $\A^b_1\subset \A^{U^r_\oo}$. For $n\in \N_{\geq 1}$, then $A^b_n \subset \A^{U^{nr}_\oo}$. Define $C_n := P_\oo \A^{U^{nr}_\oo} = B^{U^{nr}_\oo}$. Now $C_n$ is a finite $P_\oo$-invariant convex subcomplex containing $B_n$.

For $\Sigma\subset B$ define $\Sigma^0 := \{ v\in \Sigma \mid v \text{ is a vertex of } B\}$.
\begin{ste}\label{trnbuil}
 For each $f\in C_c^\infty(P_\oo)$ and finite $P_\oo$-invariant convex
subcomplex $\Sigma_0$ such that $\text{im}\; f \subset \sum_{x\in
\Sigma_0^0} V^{U_x^e}$ one has
 $$tr(f,V) = \int_{P_\oo} f(g)\tau_{\Sigma_0}(g)dg.$$
\end{ste}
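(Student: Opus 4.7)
The plan is to apply the Euler--Poincar\'e principle to the Schneider--Stuhler chain complex restricted to the finite convex subcomplex $\Sigma_0$. For each $n\geq 0$ form the abstract direct sum
$$C_n := \bigoplus_{\substack{\sigma\in\Sigma_0 \\ \dim\sigma=n}} V^{U_\sigma^e},$$
with boundary maps $C_n\to C_{n-1}$ coming from the inclusions $V^{U_\sigma^e}\hookrightarrow V^{U_\tau^e}$ for codimension-one faces $\tau\subset\sigma$, weighted by the standard incidence signs obtained from a choice of orientation on each polysimplex. Because $\Sigma_0$ is $P_\oo$-invariant and $U_{g\sigma}^e=gU_\sigma^eg^{-1}$, the subgroup $P_\oo$ acts on $C_\bullet$ by chain maps that permute the summands according to $\sigma\mapsto g\sigma$. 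Admissibility of $V$ and compactness of each $U_\sigma^e$ make $V^{U_\sigma^e}$ finite dimensional, so every $C_n$ is finite dimensional.

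The first main step is to prove that the augmented complex
$$\cdots\to C_1\to C_0\twoheadrightarrow W:=\sum_{x\in\Sigma_0^0}V^{U_x^e}\to 0$$
is exact in the category of $P_\oo$-modules. This is a localized version of the Schneider--Stuhler resolution; the plan is to combine the contractibility of the convex subcomplex $|\Sigma_0|$ with the cell-by-cell vanishing argument of \cite{SS97}, first checking exactness on a single free coefficient module (which reduces to the topological statement that $|\Sigma_0|$ has trivial reduced homology) and then deducing the general case by additivity of the functor $V\mapsto V^{U_\sigma^e}$. This step is the main technical obstacle of the proof.

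Granted the resolution, the hypothesis $\operatorname{im}\rho(f)\subset W$ implies that the operator $\rho(f)$ on $V$ factors as $V\to W\hookrightarrow V$, so by the cyclic property of the trace one has $tr(\rho(f),V)=tr(\rho(f)|_W,W)$. The Euler--Poincar\'e principle applied to the exact resolution above then gives
$$tr(\rho(f),V)=\sum_{n\geq 0}(-1)^n\,tr(\rho_n(f),C_n),$$
where $\rho_n(f)$ denotes the operator on $C_n$ induced by the $P_\oo$-module structure.

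Finally, since each $C_n$ is finite dimensional one may write $tr(\rho_n(f),C_n)=\int_{P_\oo}f(g)\,tr(\rho_n(g),C_n)\,dg$. For a fixed $g\in P_\oo$ the operator $\rho_n(g)$ permutes the direct summands of $C_n$ by $\sigma\mapsto g\sigma$, so the standard trace-of-a-permutation-of-summands computation shows that only $g$-stable summands contribute, each contributing $\epsilon_\sigma(g)\,tr(\rho(g),V^{U_\sigma^e})$, the sign recording the action of $g$ on the chosen orientation of $\sigma$. Summing over $n$ with signs $(-1)^n$ produces exactly $\tau_{\Sigma_0}(g)$ as defined, completing the proof.
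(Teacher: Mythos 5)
Your overall route — assemble the Schneider--Stuhler chain complex over $\Sigma_0$, show that it is an exact resolution of $W=\sum_{x\in\Sigma_0^0}V^{U_x^e}$, then obtain the trace identity from the Euler--Poincar\'e principle combined with the trace of a block permutation — is exactly the route the paper takes by citing \cite[Theorem 2.4 and Proposition 4.1]{MS10}. Steps 3--5 of your sketch (restricting the trace to $W$, Euler--Poincar\'e, and the $g$-stable-summand computation with sign $\epsilon_\sigma(g)$) are correct as written.

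The gap is in the step you yourself flag as the main technical obstacle, and your proposed strategy for it would not succeed. Exactness of the augmented complex does not follow from "contractibility of $|\Sigma_0|$ plus additivity." The coefficient system $\sigma\mapsto V^{U_\sigma^e}$ is not constant, so even for the regular module $V=C_c^\infty(G)$ the complex does not compute the reduced simplicial homology of $|\Sigma_0|$; there is no free module for which the claim reduces to a purely topological statement. Nor does "additivity of $V\mapsto V^{U_\sigma^e}$" let you pass from a class of generators to general admissible $V$: exactness of a complex is not inherited by quotients, and a comparison-of-resolutions argument would presuppose the very statement you are trying to prove for the kernel. The actual argument of \cite[Theorem 2.4]{MS10} (in the tradition of \cite{SS97}) is algebraic, not topological: it uses the consistency relations among the idempotents $e_\sigma$ (equivalently the compatibilities among the groups $U_\sigma^e$ for faces of a common polysimplex) to build explicit contracting homotopies, with an induction over convex subcomplexes. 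You should either reproduce that argument or cite it, as the paper does; the topological heuristic does not substitute for it.
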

\begin{proof} See the proofs in \cite[Theorem 2.4 and Proposition
4.1]{MS10}. \end{proof}

\begin{ste}\label{lctr}
 If $\gamma\in Z_G(S)$ is regular semi-simple, then 
 \begin{enumerate}[label=(\alph*)]
 \item $tr$ is constant on $\gamma U_\oo^{\max(sd(\gamma),e)}$,
 \item for all $\Sigma\subset B_a$, $\tau_\Sigma$ is constant on $U_x^{\max(ht(\Phi)sd(\gamma),e)}\gamma$.
 \end{enumerate}
\end{ste}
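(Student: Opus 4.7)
Both parts share a common core: for $u$ in a sufficiently deep Schneider--Stuhler subgroup, the quantity $\tau_\Sigma$ (respectively the character) is unchanged on passing from $\gamma$ to $u\gamma$ or $\gamma u$, because $u$ then acts trivially on each $V^{U_\sigma^e}$ whenever $\sigma$ is $\gamma$-stable and moreover $u$ fixes $\sigma$ pointwise. I would prove~(b) first, since it is the sharper geometric statement, and then deduce~(a).

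\textbf{Proof of~(b).} Fix $\Sigma\subset B_a$, a vertex $x\in B_a$, set $r:=\max(ht(\Phi)sd(\gamma),e)$, and pick $u\in U_x^r$. The formula for $\tau_\Sigma(u\gamma)$ sums signed traces over $u\gamma$-stable polysimplices of $\Sigma$. The entire identity $\tau_\Sigma(u\gamma)=\tau_\Sigma(\gamma)$ reduces to the single geometric claim
\[
 u\in U_\sigma^e \quad\text{for every }\sigma\in B_a\text{ with }\gamma\sigma=\sigma.
\]
Indeed, $U_\sigma^e\subset P_\sigma$ is the pointwise stabilizer, so $u$ then fixes every such $\sigma$ pointwise; applying the claim also to $u^{-1}$ identifies $\Sigma^{\gamma}$ with $\Sigma^{u\gamma}$ and matches their orientations $\epsilon_\sigma$; and $\rho(u)$ restricts to the identity on $V^{U_\sigma^e}$, so $tr(\rho(u\gamma),V^{U_\sigma^e})=tr(\rho(\gamma),V^{U_\sigma^e})$. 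Summing the signed contributions gives $\tau_\Sigma(u\gamma)=\tau_\Sigma(\gamma)$.

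\textbf{The geometric claim.} I would prove $U_x^r\subset U_\sigma^e$ by working root by root. Using the description of both groups as generated by root subgroups with depth controlled by affine functions on $\A_a$, the containment reduces to an inequality of the form $\langle x,-\alpha\rangle+r \geq f_\sigma(\alpha)+e$ for every $\alpha\in\Phi$, where $f_\sigma(\alpha)=\sup_{y\in\sigma}\langle y,-\alpha\rangle$. The condition $\gamma\sigma=\sigma$ with $\gamma\in Z_G(S)$ forces $\sigma$ to sit on the $\gamma$-invariant portion of the hyperplane arrangement; for a simple root $\alpha_i$ the displacement $f_\sigma(\alpha_i)-\langle x,-\alpha_i\rangle$ is bounded by $v(\alpha_i(\gamma)-1)\leq sd(\gamma)$, because this valuation controls how far from a $\gamma$-wall $\sigma$ can be displaced relative to $x$. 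Writing a general root of height $h\leq ht(\Phi)$ in its simple decomposition and summing the per-root bounds gives a total displacement $\leq h\cdot sd(\gamma)\leq ht(\Phi)sd(\gamma)\leq r-e$, as required.

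\textbf{Deducing~(a) and the main obstacle.} For~(a), apply Theorem~\ref{trnbuil} with $\Sigma_0=C_n$ for $n$ large enough that $C_n\supset B_{r-e}$; Lemma~\ref{colarg} then represents the character of $\pi$ on a neighborhood of $\gamma$ by $\tau_{C_n}$, and local constancy on $\gamma U_\oo^{\max(sd(\gamma),e)}$ follows from~(b) specialized to $x=\oo$ together with the observation that at the origin $U_\oo^e$ already dominates $U_\sigma^e$ for every simplex $\sigma$ contained in the bounded region $\A^b_{sd(\gamma)}$ without accumulating depths along root chains, so the factor $ht(\Phi)$ can be dropped. The principal technical obstacle is the inclusion $U_x^r\subset U_\sigma^e$: it demands a sharp, root-by-root comparison of Schneider--Stuhler filtrations at two distinct points of an apartment, and the factor $ht(\Phi)$ emerges only after carefully summing per-simple-root displacements along a root of maximal height. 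Everything else is formal bookkeeping around the definition of $\tau_\Sigma$.
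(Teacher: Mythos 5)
The paper does not give its own argument here: the stated proof is a citation to \cite[Theorem 7.2]{MS12}, so there is nothing in the text to compare against and your reconstruction has to stand on its own merits. It does not, because the load-bearing ``geometric claim'' --- that $U_x^r\subset U_\sigma^e$ for \emph{every} $\gamma$-stable polysimplex $\sigma\in B_a$ --- is false. Since $\gamma\in Z_G(S)$ is compact it fixes $\A_a$ pointwise, so every polysimplex of $\A_a$ is $\gamma$-stable no matter how far it sits from $x$; but the quantity you must bound,
\[
f_\sigma(\alpha)-\left<x,-\alpha\right>=-\inf_{y\in\sigma}\alpha(y-x),
\]
grows without bound as $\sigma$ recedes from $x$ along $\A_a$, so the inequality $f_\sigma(\alpha_i)-\left<x,-\alpha_i\right>\leq sd(\gamma)$ you invoke cannot hold uniformly. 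What $sd(\gamma)$ actually controls (via \cite[Proposition~4.2]{MS12}, exactly as used in Lemma~\ref{grenzen}) is how far a $\gamma$-\emph{fixed} point can wander transversally off the apartment, not the longitudinal distance of a $\gamma$-stable simplex from the basepoint. A correct argument must either restrict $\Sigma$ (as in the only application, Corollary~\ref{trnpabuil}, where $\Sigma=C_{r-e}$ lies in a ball of radius comparable to $r$ about $\oo$ and your displacement inequality could then be salvaged), or show that the far-away $\gamma$-stable simplices contribute nothing to $\tau_\Sigma(u\gamma)-\tau_\Sigma(\gamma)$ by a cancellation argument that a pointwise containment of Schneider--Stuhler groups simply cannot deliver.

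Two further soft spots. The aside ``applying the claim also to $u^{-1}$ identifies $\Sigma^\gamma$ with $\Sigma^{u\gamma}$'' does not follow: your claim is indexed by $\gamma$-stable $\sigma$, and to get $\Sigma^{u\gamma}\subset\Sigma^\gamma$ you would need its analogue for $u\gamma$-stable $\sigma$, which presupposes control of $sd(u\gamma)$ by $sd(\gamma)$ --- a nontrivial fact you do not establish. And the passage from (b) to (a) is not an argument: the asserted domination of $U_\sigma^e$ by $U_\oo^e$ for $\sigma\subset\A^b_{sd(\gamma)}$ is false (the depths differ by order $sd(\gamma)$ near the boundary of that region), and (a) --- a statement about the conjugation-invariant character $tr$ with the smaller depth $sd(\gamma)$ --- is established in \cite{MS12} by a separate mechanism, not by specializing~(b).
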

\begin{proof}
 See the proof of \cite[Theorem 7.2]{MS12}.
\end{proof}

\begin{gev}\label{trnpabuil}
 Let $\gamma \in P_\oo$ and $r\geq ht(\Phi)sd(\gamma)$, then 
$$tr_\rho(\gamma,V) = \tau_{C_{r-e}} (\gamma).$$
\end{gev}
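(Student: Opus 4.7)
The plan is to apply Theorem \ref{trnbuil} with $\Sigma_0 = C_{r-e}$ to a normalized indicator concentrated near $\gamma$, and then read off both sides as point values at $\gamma$ using the local constancy statements in Theorem \ref{lctr}. Concretely, I take $f = \langle \gamma U_\oo^{r}\rangle$. Since $U_\oo^r$ is compact open and normal in $P_\oo$ and $\gamma \in P_\oo$, the coset $\gamma U_\oo^r$ equals $U_\oo^r \gamma$, and $f \in C_c^\infty(P_\oo)$.

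The two sides of Theorem \ref{trnbuil} then collapse to the two sides of the corollary. On the one hand, the hypothesis $r \geq ht(\Phi)\,sd(\gamma)$, together with the implicit $r \geq e$ coming from $r-e \geq 1$, puts us in the range where Theorem \ref{lctr}(a) applies: the character is constant on $\gamma U_\oo^r$ with value $tr_\rho(\gamma,V)$, giving $tr(\rho(f),V) = tr_\rho(\gamma,V)$. On the other hand, Theorem \ref{lctr}(b) applied to $\Sigma = C_{r-e}$ says $\tau_{C_{r-e}}$ is constant on $U_\oo^r \gamma$, so $\int_{P_\oo} f(g)\,\tau_{C_{r-e}}(g)\,dg = \tau_{C_{r-e}}(\gamma)$. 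The corollary therefore reduces to verifying the hypothesis of Theorem \ref{trnbuil} in this setup, namely $\mathrm{im}(\rho(f)) \subset \sum_{x\in C_{r-e}^0} V^{U_x^e}$.

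Verifying this inclusion is where the main work lies, and is the step I expect to be the hardest to justify cleanly. Since $\rho(\langle U_\oo^r\rangle)$ is the projector onto $V^{U_\oo^r}$, we have $\mathrm{im}(\rho(f)) = \rho(\gamma)\,V^{U_\oo^r}$; as $C_{r-e}$ is $P_\oo$-stable and $\gamma \in P_\oo$, it suffices to prove $V^{U_\oo^r} \subset \sum_{x\in C_{r-e}^0} V^{U_x^e}$. Starting from the level decomposition $V = \sum_{x \in B^0} V^{U_x^e}$ and using the Hecke-algebra identity $\langle U_\oo^r\rangle\,u_\Sigma^e = \langle U_\oo^r\rangle\,u_{B_{r-e}}^e$ of Lemma \ref{colarg} for any finite convex $\Sigma \supset B_{r-e}$, the Schneider--Stuhler collapse identifies the action of $\langle U_\oo^r\rangle$ on $V$ with its action mediated by the finite subcomplex $C_{r-e} \supset B_{r-e}$, which delivers the desired inclusion. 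Chaining the equalities then yields $tr_\rho(\gamma,V) = tr(\rho(f),V) = \int_{P_\oo} f(g)\,\tau_{C_{r-e}}(g)\,dg = \tau_{C_{r-e}}(\gamma)$, as required.
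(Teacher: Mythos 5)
Your overall strategy matches the paper's: take $f=\gamma\langle U_\oo^r\rangle$, set $\Sigma_0=C_{r-e}$ in Theorem \ref{trnbuil}, collapse both sides to point values at $\gamma$ via Theorem \ref{lctr}, and reduce the hypothesis of Theorem \ref{trnbuil} to the inclusion $V^{U_\oo^r}\subset\sum_{x\in C_{r-e}^0}V^{U_x^e}$. Up to that reduction you are exactly on the paper's track, and you are right to flag the inclusion as the hard part.

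The gap is in how you justify that inclusion. "The Schneider--Stuhler collapse identifies the action ... which delivers the desired inclusion" is an assertion, not an argument: Lemma \ref{colarg} is an identity of operators, $\langle U_\oo^r\rangle u_\Sigma^e = \langle U_\oo^r\rangle u_{B_{r-e}}^e$, and combined with admissibility it only tells you that $\langle U_\oo^r\rangle u_{B_{r-e}}^e V = V^{U_\oo^r}$. This equality of images says nothing by itself about $V^{U_\oo^r}$ being a \emph{subset} of $u_{B_{r-e}}^e V$ — the averaging operator $\langle U_\oo^r\rangle$ could in principle move $u_{B_{r-e}}^e V$ to some other subspace of the same dimension. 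The missing step is the observation that $B_{r-e}$ is $P_\oo$-invariant and $U_\oo^r\triangleleft P_\oo$, so $u_{B_{r-e}}^e V$ is a $U_\oo^r$-stable subspace of $V$; consequently applying the idempotent $\langle U_\oo^r\rangle$ cannot take you outside it, and $V^{U_\oo^r}=\langle U_\oo^r\rangle u_{B_{r-e}}^e V\subset u_{B_{r-e}}^e V\subset\sum_{x\in C_{r-e}^0}V^{U_x^e}$. That $P_\oo$-invariance is precisely the ingredient the paper makes explicit and your write-up omits; without it the Hecke identity and the level decomposition do not force the containment you need.
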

\begin{proof}
Since $\gamma \in P_\oo$ and $U^r_\oo\nd P_\oo$, the
endomorphisms $\rho(\gamma)$ and $\rho(\left<U^r_\oo\right>)$ commute. Thus $im\;\rho(\gamma \left<U^r_\oo\right>)\subset V^{U^r_\oo}$. There exists a finite convex subcomplex $\Sigma$ containing $B_{r-e}$ such that $\left<U^r_\oo\right> u_\Sigma^e V = V^{U^r_\oo}$, because $U_\oo^r$ is an open compact group and $V$ admissible. So by Lemma \ref{colarg} $
\left<U^r_\oo\right>u_{B_{r-e}}^e V = V^{U^r_\oo}$. Since $B_{r-e}$ is $P_\oo$-invariant, the space $u_{B_{r-e}}^e V$ is $U^r_\oo$-invariant. Thus $V^{U_\oo^r} \subset u_{B_{r-e}}^e V$. Now $C_{r-e}$ is convex and contains $B_{r-e}$, so the requirements in Theorem \ref{trnbuil} are fulfilled for
$f=\gamma\left<U_\oo^r\right>$ and $\Sigma_0=C_{r-e}$. Therefore by Theorem \ref{lctr}
\[tr(\gamma,V)=tr\left(\gamma\left<U^r_\oo\right>V\right)=\frac{1}{\text{vol}(U^r_\oo)}\int_{U^r_\oo}\tau_{C_{r-e}}(\gamma g)dg=\tau_{C_{r-e}}(\gamma)\qedhere \]
\end{proof}

\begin{lem}\label{1lb}
 Let $h \in P_x$. There exists a $C$ such that for all $g\in h U_x^0$ and all simplices $\sigma\in B^g$:
 $$|tr(\rho(g),V^{U_\sigma^e})|\leq C.$$
\end{lem}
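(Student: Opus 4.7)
The plan is to obtain the uniform bound in two stages: first an a priori bound \(|tr(\rho(g),V^{U_\sigma^e})|\leq \dim V^{U_\sigma^e}\) coming from compactness of \(g\), and then a uniform bound on the dimension using the geometry of the \(g\)-fixed set.

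First I would observe that every \(g\in hU_x^0\) lies in \(P_x\): both \(h\in P_x\) and \(U_x^0\subset P_x\) fix \(x\), so \(gx=x\) and the closed cyclic subgroup \(\overline{\langle g\rangle}\subset P_x\) is compact (\(P_x\) being a compact-open subgroup modulo the centre, and the centre acting by a character on each isotypic component by admissibility). Since \(g\sigma=\sigma\) we have \(gU_\sigma^e g^{-1}=U_{g\sigma}^e=U_\sigma^e\), so \(\rho(g)\) preserves the finite-dimensional (by admissibility) subspace \(V^{U_\sigma^e}\).

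Next, the representation of the compact group \(\overline{\langle g\rangle}\) on the finite-dimensional space \(V^{U_\sigma^e}\) has image in a compact subgroup of \(GL(V^{U_\sigma^e})\), whose eigenvalues all have absolute value \(1\). Hence
\[|tr(\rho(g),V^{U_\sigma^e})|\leq \dim V^{U_\sigma^e}.\]

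The main obstacle, and the heart of the argument, is to bound \(\dim V^{U_\sigma^e}\) uniformly in \(\sigma\in B^g\) and \(g\in hU_x^0\). The plan here is geometric: since \(g\) fixes \(x\) and fixes the centroid \(c_\sigma\) of any \(g\)-stable \(\sigma\), by convexity of the displacement function on the CAT(0) space \(B\) it fixes the whole geodesic \([x,c_\sigma]\). The compactness of the coset \(hU_x^0\) (together with local finiteness of \(B\)) restricts the collection \(\{U_\sigma^e : \sigma\in B^g,\ g\in hU_x^0\}\) to finitely many \(P_x\)-conjugacy classes, and along each such class \(\dim V^{U_\sigma^e}\) is constant. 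Equivalently, I would identify a single compact-open subgroup \(K\subset G\), depending only on \(h\), \(x\), and \(e\), satisfying \(K\subset U_\sigma^e\) for all \(\sigma\) arising this way; then \(V^{U_\sigma^e}\subset V^K\) and \(C=\dim V^K<\infty\) by admissibility. The hardest point is producing such a \(K\) from the geometry, which is where the coset structure of \(hU_x^0\) and the convex fixed-point set of \(g\) have to be combined carefully.
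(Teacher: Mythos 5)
There are two genuine gaps. First, the claim that $\overline{\langle g\rangle}$ is compact is false: $P_x$ is compact only \emph{modulo} $Z(G)$, and $Z(G)$ need not be compact, so $g\in hU_x^0$ need not be a compact element and the eigenvalues of $\rho(g)$ on $V^{U_\sigma^e}$ need not have absolute value $1$. Concretely, if the central character of (an irreducible constituent of) $V$ is not unitary, the bound $|tr(\rho(g),V^{U_\sigma^e})|\leq\dim V^{U_\sigma^e}$ simply fails. The parenthetical appeal to admissibility and the central character does not rescue compactness of $\overline{\langle g\rangle}$. The paper's proof confronts this head-on: it writes $g^N=zk'$ with $z\in Z(G)$ and $k'\in U_x^0$ compact (here $N=|P_x/(Z(G)U_x^0)|$), observes that the eigenvalues of $\rho(k')$ do have modulus $1$, and hence bounds the trace by $\sum_i|\lambda_i|^{1/N}$ where $\lambda_i$ are the eigenvalues of the central operator $\rho(z)$. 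The resulting constant depends genuinely on $z$ (equivalently on $h$), not merely on a dimension.

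Second, the proposed uniform bound via a single compact-open $K\subset\bigcap_\sigma U_\sigma^e$ cannot work: already for $g=1$ one has $B^g=B$, and as $\sigma$ ranges over the whole building the groups $U_\sigma^e$ are translates by arbitrarily large elements of $G$, so they admit no common open subgroup. The finiteness you want is not produced by intersecting the $U_\sigma^e$, nor by $P_x$-conjugacy, but by the fact that the relevant quantity is constant on $G$-orbits of simplices and $B$ has only finitely many $G$-orbits of simplices. In the paper this is applied to the central quantity $c(z,\sigma)=\sum_i|\lambda_i|^{1/N}$, which depends on $\sigma$ only through $\dim$ and the central eigenvalues on $V^{U_\sigma^e}$, and these are $G$-conjugation invariants. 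So the geometric step you flagged as ``hardest'' is in fact where the plan goes wrong, and the correct idea is simpler and global (finitely many $G$-orbits), not local.
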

\begin{proof}
Denote $Z(G)$ with $Z$. Let $N$ be the order of the quotient group $P_x/(Z U_x^0)$. Take $z\in Z$ and $k\in U^0_x$ such that $h^N=zk$. Define $k' := g^Nz^{-1}$, then $k'\in U_x^0$ and $g^N=zk'$.

Since $g$ and $z$ fix $\sigma$, so does $k'$. Hence $k'$ is in $U_x^0\cap P_\sigma$. Let $m := \dim V^{U_\sigma^e}$. Choose on $V^{U_\sigma^e}$ a basis such that $\rho(z)$ and $\rho(g)$ are upper triangular matrices. Now also $\rho(k')$ is an upper triangular matrix. Let $\kappa_1,\ldots,\kappa_m$, $\lambda_1,\ldots, \lambda_m$ and $\nu_1,\ldots,\nu_m$ be the entries on the diagonal of the matrices $\rho(g)$, $\rho(z)$ and $\rho(k')$, respectively. Define $c(z,\sigma):= \sum_{i=1}^m |\lambda_i|^\frac{1}{N}$. Since $k'$ is contained in a compact subgroup acting on $V^{U_\sigma^e}$, $|\nu_i|=1$. Thus $|\kappa_i^N|=|\lambda_i\nu_i|=|\lambda_i|$. Hence $|tr(\rho(g),V^{U_\sigma^e})|\leq \sum_{i=1}^m|\lambda_i|^\frac{1}{N}=c(z,\sigma)$.\\
Because $z$ is in the center of $G$, for all $\sigma$ and $\sigma'$ in the same $G$-orbit,\\ $c(z,\sigma)=c(z,\sigma')$. (The eigenvalues and their multiplicity for $\rho(z)$ on $V^{U_\sigma^e}$ and $V^{U_{\sigma'}^e}$ are the same.) Since there are only finitely many $G$-orbits of simplices in $B$, there is a $C_z$ such that $c(z,\sigma)\leq C_z$ for all simplices $\sigma\in B$. Thus $|tr(\rho(g),V^{U_\sigma^e})|\leq C_z$ for all $\sigma\in B^g$. \end{proof}

Recall that for regular semi-simple elements $\gamma$, 
$$D(\gamma) := \prod_{\alpha \in R(G,Z_G(\gamma))} (\alpha(\gamma)-1).$$
Define $n := \dim \A_a$.
\begin{pro}\label{afschatting1}
 Let $g\in P_x$. There exists a $C\in\R$ depending only on the affine building of $G$, the element $g$ and the representation $(\rho,V)$, such that for all semi-simple regular $\gamma \in\; ^G{Z_G(S)}\cap gU_x^0$:
 $$|tr(\gamma,V)| \leq C(ht(\Phi)sd(\gamma)+1)^{n}|D(\gamma)|^{-\frac12}.$$
\end{pro}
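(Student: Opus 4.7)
The plan is to combine Corollary \ref{trnpabuil}, which writes the trace as the alternating sum $\tau_{C_{r-e}}(\gamma)$ over $\gamma$-fixed simplices in a bounded subcomplex, with the pointwise estimate of Lemma \ref{1lb} and the orbit-counting bound of Theorem \ref{ugrens}. First I would reduce to the case $\gamma \in Z_G(S)$: since $\gamma \in {}^G Z_G(S)$, write $\gamma = h\gamma_0 h^{-1}$ with $\gamma_0 \in Z_G(S)$ and replace $S$ by $hSh^{-1}$, another maximal $\F$-split torus. All the building constructions are $G$-equivariant, so every earlier result applies verbatim with this new split torus and with the vertex $x$ playing the role of $\oo$. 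Since $\gamma \in gU_x^0 \subset P_x$, Corollary \ref{trnpabuil} gives
\[
 tr_\rho(\gamma,V) = \tau_{C_{r-e}}(\gamma) = \sum_{\sigma \in C_{r-e}^\gamma} (-1)^{\dim \sigma}\epsilon_\sigma(\gamma)\, tr\bigl(\rho(\gamma), V^{U_\sigma^e}\bigr)
\]
for any integer $r \geq \max(ht(\Phi)sd(\gamma),e)$; I would choose $r := \max(\lceil ht(\Phi)sd(\gamma)\rceil, e)$, so that $r$ is bounded linearly in $ht(\Phi)sd(\gamma)+1$.

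Next, by Lemma \ref{1lb} applied with its $h$ equal to our $g$, there exists a constant $C_0 = C_0(g,(\rho,V))$ bounding $|tr(\rho(\gamma),V^{U_\sigma^e})|$ uniformly over $\sigma \in C_{r-e}^\gamma$, so the triangle inequality yields $|tr_\rho(\gamma,V)| \leq C_0 \cdot \#(C_{r-e}^\gamma)$. To estimate $\#(C_{r-e}^\gamma)$, recall that $C_{r-e} = P_x\cdot \A^{U_x^{(r-e)r_0}}$ for a fixed integer $r_0$ depending only on the building; hence $\Omega := C_{r-e}\cap \A$ is a polytope in the $n$-dimensional apartment of diameter linear in $r$, containing at most $O((ht(\Phi)sd(\gamma)+1)^n)$ simplices. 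For each $\sigma' \subset \Omega$, a simplex $p\sigma'$ of $P_x\cdot \sigma'$ is $\gamma$-fixed if and only if $p^{-1}\gamma p$ fixes $\sigma'$; the Iwahori-type decomposition $P_x = (U^- \cap P_x)(N_G(S)\cap P_x)(U^+ \cap P_x)$, combined with the fact that $\gamma \in Z_G(S)$ acts trivially on $\A$ and commutes with $N_G(S)\cap P_x$, reduces this condition to one on the $U^+ \cap P_x$-component of $p$, and Theorem \ref{ugrens} then bounds the number of admissible components by $|D(\gamma)|^{-\frac12}$. Summing over $\Omega$ gives $\#(C_{r-e}^\gamma) \leq O((ht(\Phi)sd(\gamma)+1)^n)\,|D(\gamma)|^{-\frac12}$, and combining with the previous display delivers the stated inequality.

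I expect the main obstacle to be this last counting step: turning the orbit-wise bound of Theorem \ref{ugrens} into a global count over $C_{r-e}$ requires a clean Iwahori decomposition of $P_x$ and a careful verification that the $U^- \cap P_x$ and Levi components of $p$ contribute no additional free parameters to the fixed-point count. A secondary technicality is the degenerate regime $ht(\Phi)sd(\gamma) < e$, which is handled by the $+1$ in the statement and by absorbing any $e$-dependent constants into $C = C(g,(\rho,V))$.
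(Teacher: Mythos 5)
Your proposal follows the same route as the paper: conjugate so that $\gamma \in Z_G(hSh^{-1})$, invoke Corollary \ref{trnpabuil} (via Theorem \ref{lctr}) to express the trace as $\tau_{hC_{ht(\Phi)sd(\gamma)}}(\gamma)$, bound the number of simplices of $C_r\cap\A$ by $c_b r^n$, apply Theorem \ref{ugrens} to bound the $\gamma$-fixed simplices by $c_b(ht(\Phi)sd(\gamma)+1)^n|D(\gamma)|^{-1/2}$, and use Lemma \ref{1lb} to bound each $|tr(\rho(\gamma),V^{U_\sigma^e})|$. So the overall architecture is identical.

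One point to correct in the way you phrase the counting step: $\gamma \in Z_G(S)$ does \emph{not} in general commute with $N_G(S)\cap P_x$ (only with $Z_G(S)\cap P_x$; conjugation by $N_G(S)$ permutes $Z_G(S)$ nontrivially through $W$). Moreover, appealing to an Iwahori-type decomposition of the full $P_x$ and then claiming only the $U^+$-component matters is not quite the cleanest path, because a priori both unipotent factors move points off $\A$. The mechanism the paper relies on (implicitly, and made explicit later in Theorem \ref{representanten}) is that every point of the building is in some $(U^{-\Phi'^+}\cap P_y)$-orbit of a point of $\A$, for one of the finitely many choices of positive system $\Phi'^+$; this reduces the fixed-point count to a single unipotent orbit per apartment simplex and per Weyl chamber, to which Theorem \ref{ugrens} applies directly (the $|W|$ factor is absorbed into the constant). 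With this adjustment your argument matches the paper's, and your instinct that this counting step is the delicate one is correct—the paper itself is terse there.
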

\begin{proof}
Take a $c_b\in\R$ depending on the affine building such that for each $r\in \N$ the number of simplices in $C_{r}\cap \A$ is bounded by $c_br^n$.

Let $h\in G$ be such that $\gamma \in Z_G(hSh^{-1})$. Combining Theorem \ref{lctr} and Corollary \ref{trnpabuil} results in $tr(\gamma,V) = \tau_{hC_{ht(\Phi)sd(\gamma)}}(\gamma)$.  The number of simplices in $hC_{ht(\Phi)sd(\gamma)}\cap h\A$ is bounded by $c_b (ht(\Phi)sd(\gamma)+1)^n$. By Theorem \ref{ugrens} the number of $\gamma$-fixed simplices in $hC_{ht(\Phi)sd(\gamma)}$ is bounded by $c_b (ht(\Phi)sd(\gamma)+1)^n |D(\gamma)|^{-\frac12}$. By Lemma \ref{1lb} $|tr(\rho(\gamma),V^{U^e_\sigma})|\leq C$ for all $\gamma\in gU^0_x$ and $\sigma\in B^\gamma$. Thus
\begin{align*}
tr(\gamma,V) &= \tau_{hC_{ht(\Phi)sd(\gamma)}}(\gamma)=\sum_{\sigma\in (hC_{ht(\Phi)sd(\gamma)})^\gamma}(-1)^{\dim \sigma}\epsilon_\sigma(\gamma) tr(\rho(g),V^{U^e_\sigma})\\ &\leq Cc_b(ht(\Phi)sd(\gamma)+1)^n |D(\gamma)|^{-\frac12}.\qedhere
\end{align*}
\end{proof}

Now we have an upper-bound for the trace of the compact regular elements in $Z_G(S)$ in a neighborhood of a compact element of $G$. For a general regular element in $Z_G(S)$ in a neighborhood of a general element of $G$ we use Casselman's method to compute the character.\\

Let $P$ be a $\F$-parabolic subgroup of $G$, $N$ its unipotent radical and $M$ a Levi factor of $P$. For a representation $(\rho,V)$ of $G$ define $$V(N) := \left< v-\pi(n)v: v\in V,n\in N\right>$$ and $V_N := V/V(N)$. Now $M$ acts on $V_N$ via the action of $M$ on $V$. The action of $M$ on $V_N$ is denoted by $\rho_M$. The $M$-module $(\rho_M,V_N)$ is called the Jacquet module of $V$.\\

For $g\in G$ we have the parabolic subgroup contracted by $g$: 
\begin{align*}
 P_g &:= \{ p\in G : \{g^npg^{-n} : n\in\N\} \text{ is bounded}\}\text{ and}\\
 M_g &:= \{ p\in G : \{g^npg^{-n} : n\in\Z\} \text{ is bounded}\}.
\end{align*}

By \cite[Proposition 2.3]{MS12} $P_g$ is a parabolic subgroup of $G$, $M_g$ is a Levi subgroup and $g$ is, viewed as element of $M_g$, compact. Roughly speaking by looking at the group $M_g$, the center of the group containing $g$ is enlarged in such a way that $g$ is compact modulo this enlarged center.

\begin{mdef}
 Let $g\in G$. We define the displacement function $d_g: B_e\rightarrow \R$ by $d_g(x) := d(x,gx)$. Let $d(g) := \inf_{x\in B_e} d_g(x)$.
\end{mdef}

Let $l$ be a line contained in $\A_e$. Let $\Phi_l$ be the set of roots $\alpha$ of $S$ such that $\left<\alpha,\cdot\right>$ is constant on $l$. Let $M_l$ be the Levi subgroup of $G$ generated by $Z_G(S)$ and the groups $U_\alpha$ for $\alpha\in \Phi_l$.

\begin{lem}\label{conmicong}
 Let $M$ be a Levi subgroup of a parabolic subgroup of $G$. Let $S$ be a maximal split torus in $M$.\\
 The regular semi-simple elements in $^GZ_G(S)\cap M$ are the regular semi-simple elements in $^{M}Z_{M}(S) \cap M$.
\end{lem}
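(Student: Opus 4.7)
The plan is to prove the two inclusions of sets separately; the containment ${}^{M}Z_{M}(S)\cap M\subseteq {}^{G}Z_{G}(S)\cap M$ is immediate, so I will concentrate on the reverse. First I will set $A$ to be the maximal $\F$-split torus in $Z(M)^{0}$, so that $M=Z_{G}(A)$, and observe that $A\subseteq S$ (by maximality of $S$ and centrality of $A$) forces $Z_{G}(S)\subseteq Z_{G}(A)=M$ and hence $Z_{G}(S)=Z_{M}(S)$; this identification will let me move between $Z_{G}$ and $Z_{M}$ at the end.

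Next, for a regular semi-simple $\gamma\in {}^{G}Z_{G}(S)\cap M$, I will write $\gamma=gzg^{-1}$ with $z\in Z_{G}(S)$ and consider the maximal tori $T':=Z_{G}(\gamma)$ and $T'':=g^{-1}T'g=Z_{G}(z)$. The pivotal observation will be that both $T'$ and $T''$ sit inside $M$: since $A$ is central in $M$ and commutes with $\gamma\in M$ and with $z\in Z_{G}(S)\subseteq M$, we have $A\subseteq T'\cap T''$, hence $T',T''\subseteq Z_{G}(A)=M$. In particular both are maximal $\F$-tori of $M$.

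I will then identify their maximal $\F$-split subtori: since $S\subseteq T''$ and $S$ is already maximal $\F$-split in $G$, it is the $\F$-split part of $T''$, so $gSg^{-1}$ is the $\F$-split part of $T'$ and lands inside $T'\subseteq M$. Because $M$ and $G$ have the same $\F$-split rank, both $S$ and $gSg^{-1}$ are maximal $\F$-split tori of $M$, so by the classical $M(\F)$-conjugacy of maximal $\F$-split tori in the connected reductive $\F$-group $M$ there exists $m\in M$ with $mSm^{-1}=gSg^{-1}$. Setting $n:=m^{-1}g\in N_{G}(S)$ gives $g=mn$ and
\[\gamma = m\,(nzn^{-1})\,m^{-1},\]
and because $Z_{G}(S)\triangleleft N_{G}(S)$ the element $nzn^{-1}$ is still in $Z_{G}(S)=Z_{M}(S)$, so $\gamma\in {}^{M}Z_{M}(S)$.

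The main conceptual point, really the only non-formal step, is recognizing that $T'=Z_{G}(\gamma)$ must absorb the central split torus $A$ of $M$ and is therefore automatically contained in $M$; everything else is a bookkeeping argument resting on the well-known conjugacy of maximal $\F$-split tori inside the Levi $M$.
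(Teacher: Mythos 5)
Your proof is correct and follows the same overall strategy as the paper: establish that $Z_G(\gamma)$ is contained in $M$, deduce that the maximal $\F$-split subtorus $gSg^{-1}$ of $Z_G(\gamma)$ therefore lies in $M$, and conclude via $M$-conjugacy of maximal $\F$-split tori. The only variation is in how the containment $Z_G(\gamma)\subseteq M$ is obtained --- you use $M=Z_G(A)$ for the split central torus $A$ together with the fact that the abelian group $Z_G(\gamma)$ contains (hence centralizes) $A$, while the paper instead notes that $Z_M(\gamma)$ is a maximal torus of $M$ and invokes equality of ranks to identify it with $Z_G(\gamma)$.
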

\begin{proof}
If $\gamma\in\; ^GZ_G(S)\cap M$ and $\gamma$ is regular, then $Z_M(\gamma)$ is a maximal torus of $M$. Since the ranks of $G$ and $M$ are the same, $Z_M(\gamma)$ is also a maximal torus of $G$. Now $Z_G(\gamma)$ is a maximal torus of $G$, so $Z_G(\gamma)=Z_M(\gamma)$. Take $g\in G$ such that $\gamma \in Z_G(gSg^{-1})$. Thus $gSg^{-1}<Z_G(\gamma)=Z_M(\gamma)$. Since $gSg^{-1}$ is a maximal split torus of $G$, it is also a maximal split torus of $M$. Since $M$ is reductive, maximal split tori in $M$ are conjugated over $M$. So there is a $m\in M$ such that $gSg^{-1} = m S m^{-1}$. Thus $\gamma \in Z_M(mSm^{-1})$.
\end{proof}

The following is in the extended building.\\
For the moment let $g\in G$ be non-compact modulo the center. Thus $d(g)\not=0$. Let $l$ be a line in $B_e$ on which $g$ acts by translations. Such a line exists by \cite[Lemma 3.4.4]{DB02b}. Let $S$ be a maximal split-torus such that $l$ is in the apartment of $S$. By \cite[Lemma 3.4.4]{DB02b} $M_g=M_l$. So in particular $S\subset M_g$. Take $x\in l$, then $d_g(x)=d(g)$. Let $H = P_{[x,gx]}$. From the proof of \cite[Lemma 3.4.7]{DB02b} we see that
\begin{align}
d(gh)=d(g) \text{ for all $h\in H$.} \label{diseq}
\end{align}
\begin{lem}\label{conjugatievanleviofg}
 Let $h\in H$. The group $M_{gh}$ is conjugated with $M_g$ by an element of $H$.\\
 If moreover $gh\in M_g$, then $M_{gh}=M_g$.
\end{lem}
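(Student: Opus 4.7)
The plan is as follows. Since $h \in H = P_{[x, gx]}$ fixes the segment $[x, gx]$ pointwise, we have $ghx = gx$, so by (\ref{diseq})
\[ d_{gh}(x) = d(x, gx) = d(g) = d(gh). \]
Thus $x$ lies in the min-set of $gh$, whence $gh$ is semi-simple and admits a translation axis $l'$ through $x$; because $gh$ sends $x$ to $gx$ along $l'$ at exactly the translation length, the segment $[x, gx]$ is contained in $l'$. Consequently $l$ and $l'$ share this segment. Moreover, any two axes of a semi-simple isometry of a CAT$(0)$ space are parallel (the min-set splits isometrically as $Y \times \R$), so every axis of $gh$ has the same direction vector as $l'$, and in particular the same direction as $l$.

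For the first claim, pick any apartment $\A'$ of $B_e$ containing $l'$. Since $\A'$ and $\A_e$ both contain the convex set $[x, gx]$, the standard Bruhat--Tits axiom supplies an element $k \in P_{[x, gx]} = H$ with $k\A' = \A_e$; then $kl' \subset \A_e$ is an axis of $k(gh)k^{-1}$. Because $k$ fixes $[x, gx]$ pointwise, $kl'$ and $l$ still share $[x, gx]$, so they have the same direction in $\A_e$. Applying \cite[Lemma 3.4.4]{DB02b} to $k(gh)k^{-1}$ gives $M_{k(gh)k^{-1}} = M_{kl'}$. Since $M_{l''}$ depends only on $\Phi_{l''}$, which in turn depends only on the direction of $l''$ in $\A_e$, we obtain $M_{kl'} = M_l = M_g$. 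Conjugating back yields $M_{gh} = k^{-1} M_g k$ with $k^{-1} \in H$.

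For the refinement assuming $gh \in M_g$, one uses that $M_g$ is a reductive group containing the maximal split torus $S$, so its extended building $B_e(M_g)$ embeds in $B_e(G)$ with standard apartment $\A_e$. As $gh \in M_g$ is semi-simple, it has an axis already in $B_e(M_g)$, and applying \cite[Lemma 3.4.4]{DB02b} inside the Levi $M_g$ we obtain $m \in M_g$ such that $ml_{M_g} \subset \A_e$ is an axis of $m(gh)m^{-1}$. By the parallelism of axes noted above, this axis has the same direction as $l$, so the previous argument yields $M_{m(gh)m^{-1}} = M_g$. Since $m \in M_g$ normalizes $M_g$, this forces $M_{gh} = M_g$.

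The main obstacle I anticipate is the building-theoretic step in the first paragraph: extracting the element $k \in H$ that maps some apartment containing $l'$ onto $\A_e$. This requires invoking cleanly the fact that in the extended Bruhat--Tits building any two apartments sharing a non-empty convex subset are conjugate by an element of the pointwise stabilizer of that subset; one must verify that the relevant transitivity statement holds in the extended-building framework of \cite{MS12} and for the (possibly non-simplicial) segment $[x, gx]$, rather than merely for chambers.
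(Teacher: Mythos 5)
Your argument for the first assertion is essentially the paper's proof, just phrased in the inverse direction: where you produce $k\in H$ with $k\A'=\A_e$ and then push forward $l'$, the paper produces $h_o\in H$ with $h_o\A=\A'$ and observes $h_ol=l'$ because $h_o$ fixes $[x,gx]$ pointwise and a segment extends uniquely to a line inside a single apartment. The apartment-transitivity step you flag as the main obstacle is precisely what the paper cites: \cite[7.4.9]{BT72}. (The CAT$(0)$ parallelism fact you invoke is not used and is not needed here.)

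For the second assertion your route is genuinely different from the paper's, and it has a gap. You apply \cite[Lemma 3.4.4]{DB02b} inside $M_g$ to produce an axis $ml_{M_g}\subset\A_e$ of $m(gh)m^{-1}$ and then claim ``by the parallelism of axes noted above, this axis has the same direction as $l$.'' The parallelism statement concerns two axes of the \emph{same} semi-simple isometry; here $ml_{M_g}$ is an axis of $m(gh)m^{-1}$ while $l$ is an axis of $g$, two different isometries, so the fact does not apply. One would instead have to show that conjugating by $m$ preserves the direction, which is exactly what is not known a priori. There is also a quieter issue: an axis of $gh$ acting on $B_e(M_g)$ need not be an axis of $gh$ acting on $B_e(G)$, since the metric on the apartment $\A_a(M_g)$ is built from $R^+(M_g)$ while the metric on $\A_a(G)$ is built from $R^+(G)$, so the two displacement functions and min-sets can differ.

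The paper avoids all of this by working algebraically. Writing $g'=gh$, it notes that $g'x=gx$ and that (by \cite[Lemma 3.4.4]{DB02b}) the image of $x$ in $B_a(M_g)$ is $g$-fixed, hence also $g'$-fixed; so $g'$ is compact modulo $Z(M_g)$. Consequently $\{g'^nmg'^{-n}:n\in\Z\}$ is bounded for every $m\in M_g$, which by the very definition of $M_{g'}$ gives $M_g\subset M_{g'}$. In particular $g\in M_{g'}$, and the same reasoning with the roles of $g$ and $g'$ exchanged yields $M_{g'}\subset M_g$, whence $M_{gh}=M_g$. You may want to rework your second paragraph along these lines, since the definition-chasing with bounded conjugation orbits sidesteps both of the metric/parallelism problems above.
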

\begin{proof}
Since $h\in H$, $d(gh)=d(x,gx)=d(x,ghx)$. By the proof of \cite[Lemma 3.4.4]{DB02b} there is a line $l'$ such that the points $(gh)^nx$ for $n\in \Z$ are on $l'$. This line lies in an apartment $\A'$. Now $[x,gx] \subset l\cap l'$. By \cite[7.4.9]{BT72} there is a $h_o\in H$ such that $h_o\A=\A'$. In the apartment $\A$ (respectively $\A'$) there is only one way to continue the line segment $[x,gx]$ namely $l$ ($l'$ , respectively). Since $h_o$ fixes $[x,gx]$ and maps lines to lines, we have $h_ol=l'$. So $$M_{gh}=M_{l'} = M_{h_ol}=h_oM_lh_o^{-1}=h_oM_gh_o^{-1}.$$

Assume that $g':= gh\in M_g$. Since $g'$ fixes $x$ in $B_a(M_{g})$, $g'$ is compact modulo the center of $M_g$. Because $g'$ is compact modulo the center, if $m\in M_g$ then $\{ g'^{n}mg'^{-n} : n\in \Z\}$ is bounded. So $M_g \subset M_{g'}$. Thus $g\in M_{g'}$. Since $g$ fixes $x$ in $B_a(M_{g'})$, also $M_{g'}\subset M_g$. So $M_{gh}=M_{g'}=M_g$. 
\end{proof}
\begin{pro}\label{afschattingalgemeen}
 For every $g\in G$ there exists a constant $C\in\R$, such that for all semi-simple regular $\gamma \in \;^G{Z_G(S)}\cap gP_{[x,gx]}$:
 $$|tr(\gamma,V)| \leq C(ht(\Phi)sd(\gamma)+1)^{\dim \A_a}|D(\gamma)|^{-\frac12}.$$
\end{pro}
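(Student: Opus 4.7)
The plan is to reduce the non-compact case to Proposition \ref{afschatting1} applied inside the Levi $M := M_g$, by using a Casselman-type character formula for the Jacquet module along $P_g = M N_g$. If $g$ is compact modulo $Z(G)$, it fixes a point $x \in B_e$, in which case $[x,gx]=\{x\}$, $H = P_x$, and the statement follows directly from Proposition \ref{afschatting1}. So assume $g$ is non-compact modulo the center, and retain the line $l$, point $x\in l$, and $H = P_{[x,gx]}$ from the setup preceding the Proposition.

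Fix $\gamma \in {}^G Z_G(S) \cap gH$. By Lemma \ref{conjugatievanleviofg} there exists $h_o \in H$ with $M_\gamma = h_o M h_o^{-1}$; set $\gamma' := h_o^{-1}\gamma h_o$, which lies in $M_{\gamma'} = M$ (using the general fact $m\in M_m$) and satisfies $\text{tr}(\rho(\gamma),V) = \text{tr}(\rho(\gamma'),V)$. The same argument as in Lemma \ref{conjugatievanleviofg} shows $\gamma'$ fixes a point in $B_a(M)$: the translation direction of $g$ lies in $Y(M)/Y(G)$, and the part coming from $h\in H$ is compact. Hence $\gamma'$ is compact modulo $Z(M)$, and by Lemma \ref{conmicong} it is regular semisimple in $M$ and lies in ${}^M Z_M(S')$ for some maximal $M$-split torus $S' \subset M$.

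Next I would invoke Casselman's character formula in the form
\[|D_G(\gamma')|^{1/2}\,|\text{tr}(\rho(\gamma'),V)| = |D_M(\gamma')|^{1/2}\,|\text{tr}(\rho_M(\gamma'), V_{N_g})|,\]
valid for $\gamma'$ regular semisimple in $M$ (conjugating by a Weyl element of $M$ to meet Casselman's positivity requirement for $P_g$ if necessary). The Jacquet module $V_{N_g}$ is admissible as an $M$-representation. Since $H$ is compact, the collection of all such $\gamma'$, as $\gamma$ ranges over ${}^G Z_G(S) \cap gH$, lies in a fixed compact subset of $M$, which I would cover by finitely many cosets $g_i P^M_{x_i}$ and apply Proposition \ref{afschatting1} inside $M$ to each, obtaining
\[|\text{tr}(\rho_M(\gamma'), V_{N_g})| \leq C_M\bigl(ht(\Phi_M)\,sd_M(\gamma')+1\bigr)^{\dim \A_a(M)}|D_M(\gamma')|^{-1/2}.\]

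Combining the two displays, the $|D_M(\gamma')|^{1/2}$ factors cancel, leaving $|D_G(\gamma')|^{-1/2} = |D_G(\gamma)|^{-1/2}$ by conjugation invariance. The elementary monotonicities $ht(\Phi_M) \leq ht(\Phi)$, $sd_M(\gamma') \leq sd(\gamma)$, and $\dim \A_a(M) \leq \dim \A_a$ (both reduced buildings share the torus $S$, and $Z(M)_\Delta \supset Z(G)_\Delta$) then upgrade the bound to the claimed inequality. The main obstacle is the invocation of Casselman's formula in Step 2: one must verify the contraction hypothesis after the conjugation by $h_o$, which also transports the parabolic associated with $\gamma$ back to $P_g$, and check that the normalization factors cancel to leave precisely $|D_G(\gamma)|^{-1/2}$ rather than some residual ratio of roots of $N_g$.
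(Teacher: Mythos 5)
Your overall strategy is the same as the paper's: reduce the non-compact case to $M_g$ by Casselman's theorem, invoke Lemma \ref{conjugatievanleviofg} and Lemma \ref{conmicong} to conjugate $\gamma$ into $M_g$ as a compact regular element, and then apply Proposition \ref{afschatting1} inside the Levi. The one genuine difference is exactly the point you flag as your "main obstacle," and it deserves a closer look, because as written your intermediate display is not correct.

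Casselman's theorem, as cited by the paper from \cite{Ca77} and with the paper's \emph{unnormalized} Jacquet module $V_{N_g} = V/V(N_g)$, reads simply $\mathrm{tr}(\gamma',V)=\mathrm{tr}(\gamma',V_{N_g})$, with no $D$-factors. Your display
\[ |D_G(\gamma')|^{1/2}|\mathrm{tr}(\rho(\gamma'),V)| = |D_M(\gamma')|^{1/2}|\mathrm{tr}(\rho_M(\gamma'),V_{N_g})| \]
would then force $|D_G(\gamma')|=|D_M(\gamma')|$, which fails for non-compact $\gamma'$: the roots in $\bar{\mathfrak n}_g$ contribute $\prod_{\alpha}|\alpha(\gamma')|>1$ to $|D_G/D_M|$. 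Your formula is the correct Casselman identity only if $V_{N_g}$ is replaced by the \emph{normalized} Jacquet module $V_{N_g}\otimes\delta_{P_g}^{-1/2}$, since $\delta_{P_g}(\gamma')^{1/2}=|D_M(\gamma')/D_G(\gamma')|^{1/2}$. If you run your argument consistently with the normalized module, the $|D_M|^{1/2}$ factors do cancel cleanly and you avoid any residual ratio — a slightly slicker bookkeeping than the paper's. The paper instead keeps the unnormalized Casselman identity and bounds the residual ratio by a separate continuity/compactness argument: Lemma \ref{D/Dm} constructs a continuous function $\varphi$ on $M_g$ whose value at semisimple regular $\gamma$ is $D(\gamma)/D_{M_g}(\gamma)$, so $|D/D_{M_g}|$ is bounded on the compact set $gP_x\cap M_g$, giving the extra constant $\sqrt{C'}$. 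Either route works; the thing you must not do is mix them, i.e.\ write the normalized identity but then feed the unnormalized $\rho_M$ into Proposition \ref{afschatting1} without accounting for the $\delta_{P_g}^{-1/2}$ twist. Finally, note that the paper also quietly arranges for the constant in Proposition \ref{afschatting1} (applied in $M_g$) to be uniform over the finitely many parabolics $P$ with Levi $M_g$, since $N_\gamma$ can vary even when $M_\gamma=M_g$; your parenthetical about conjugating by a Weyl element is one way to handle this, but it needs to be made precise.
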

\begin{proof}
If $g$ is compact modulo the center we can use Proposition \ref{afschatting1}.\\ 
Assume that $g$ is not compact, then $d(g)\not=0$.\\
Let $H = P_{[x,gx]}$ for a $x\in B_e$ such that $d_g(x)=d(g)$. By conjugating $g$ we may assume that $g^z x\in\A_e$ for all $z\in\Z$. 

Let $N_g$ be the unipotent radical of $P_g$. Let $(V_{N_g},\rho_{N_g})$ be the Jacquet representation of $M_g$ for $\rho$. 

To indicate the difference between the objects defined for $G$ and $M_g$ the one corresponding with $M_g$ are labelled by $M_g$, e.g. $B_e$ is the building of $G$ and $B_e(M_g)$ is the building of $M_g$, $D_{M_g}$ is the Harish-Chandra function $D$ for $M_g$.

By \cite[Lemma 3.4.4]{DB02b} $\A_e$ is an apartment of $B_e(M_g)$ and the image of $x$ in $B_a(M_g)$ is a $g$-fixed point.\\

Let $\gamma$ be a semi-simple regular element in $^GZ_G(S)$.\\
Assume that $\gamma \in M_g$ and $\gamma\in gP_x$. By Lemma \ref{conmicong} $\gamma \in \;^{M_g}Z_{M_g}(S)\cap M_g$. Also $\gamma \in gP_x\cap M_g= gP_x(M_g)$. Let $P$ be a parabolic subgroup containing $M_g$ and let $N$ be the unipotent radical of $P$. By Proposition \ref{afschatting1} applied to $(\rho_{N},V_{N})$, there is a $C\in\R$ such that for all such $\gamma$ with $N=N_\gamma$,
$$|tr(\gamma,V_{N_\gamma})| \leq C(ht(\Phi)sd_{M_g}(\gamma)+1)^n|D_{M_g}(\gamma)|^{-\frac12}.$$
This $C$ can and will be chosen independently of $P$ and $N$, since there are only finitely many parabolic subgroups containing $M_g$.\\
By Casselman \cite{Ca77} $tr(\gamma,V)=tr(\gamma,V_{N_g})$. Thus for all $\gamma\in M_g$ with $\gamma \in gP_x$,
\begin{align}\label{ineqmg}
|tr(\gamma,V)| \leq C(ht(\Phi_{M_g})sd_{M_g}(\gamma)+1)^n|D_{M_g}(\gamma)|^{-\frac12}.
\end{align}

\begin{lem}\label{D/Dm}
 There exists a $C'\in\R_{>0}$ such that for all semi-simple regular elements $\gamma \in gP_x\cap M_g$ one has $\frac{|D(\gamma)|}{|D_{M_g}(\gamma)|}\leq C'$.
\end{lem}
\begin{proof}
We are going to construct a continuous function on $M_g$, which on the semi-simple regular elements $\gamma$ takes the value $\frac{D(\gamma)}{D_{M_g}(\gamma)}$.\\
Pick a basis $b_1,\ldots,b_n$ of $\gl$ such that $b_1,\ldots, b_{\dim M_g}$ is a basis for $\ml_g$. 
Let $g'\in M_g$. Write the matrix $ad(g')$ with respect to this basis. Let $\varphi(g)$ be determinant of the submatrix of $ad(g')$ in the lower right corner of dimension $\dim G-\dim M_g$. Then clearly $\varphi : M_g\mapsto \F$ is continuous.\\

Let $\gamma$ be a semi-simple regular element in $M_g$. Notice that the definition of $\varphi$ is independent of the choice of a basis with the property that the first $\dim M_g$ basis elements are in $\ml_g$. Since $\gamma$ is semi-simple regular, $T_\gamma := Z_G(\gamma)$ is a maximal torus and contained in $M_g$. Choose as basis for $\gl$ a basis for $\tl_\gamma$ and the eigenvectors $\mathfrak{u}_\alpha$, $\alpha\in R(M_g,T_\gamma)$, supplemented with the eigenvectors $\mathfrak{u}_\beta$ for all $\beta\in R(G,T_\gamma)-R(M_g,T_\gamma)$. So $$\varphi(\gamma) = \prod_{\beta\in R(G,T_\gamma)-R(M_g,T_\gamma)} 1-\beta(\gamma)=\frac{\prod_{\alpha\in R(G,T_\gamma)} 1-\alpha(\gamma)}{\prod_{\alpha\in R(M_g,T_\gamma)} 1-\alpha(\gamma)}=\frac{D(\gamma)}{D_{M_g}(\gamma)}.$$

Since $\varphi$ is continuous and $gP_x\cap M_g$ is compact, there is a $C'$ such that $\frac{|D(\gamma)|}{|D_{M_g}(\gamma)|}\leq C'$ for all semi-simple regular elements $\gamma \in gP_x\cap M_g$.
\end{proof}

{\noindent\it Continuation of the proof of Proposition \ref{afschattingalgemeen}.}  Combining Lemma \ref{D/Dm} with the estimate of the trace (\ref{ineqmg}), $sd_{M_g}(\gamma)\leq sd(\gamma)$ and $ht(\Phi_{M_g})\leq ht(\Phi)$ we get for all semi-simple regular $\gamma \in gP_x\cap M_g$:
\begin{align}
\begin{split}\label{ineqg}
|tr(\gamma,V)| &\leq C(ht(\Phi_{M_g})sd_{M_g}(\gamma)+1)^n|D_{M_g}(\gamma)|^{-\frac12}\\
&\leq C\sqrt{C'}(ht(\Phi)sd(\gamma)+1)^n |D(\gamma)|^{-\frac12}.
\end{split}
\end{align}

Assume that $\gamma\in ^G{Z_G(S)}\cap gP_{[x,gx]}$. There is, by Lemma \ref{conjugatievanleviofg}, a $h\in H$ such that $hM_\gamma h^{-1}= M_g$. Now $h\gamma h^{-1}\in M_g$ and $h\gamma h^{-1} \in g P_x$, because\\ $h\gamma h^{-1} x = h\gamma x = hgx=gx$. Thus by (\ref{ineqg}):
\begin{align*}
|tr(\gamma,V)| &= |tr(h\gamma h^{-1} ,V)|\leq  
 C\sqrt{C'}(ht(\Phi)sd(h\gamma h^{-1})+1)^n|D(h\gamma h^{-1})|^{-\frac12}\\
 &= C\sqrt{C'}(ht(\Phi)sd(\gamma)+1)^n|D(\gamma)|^{-\frac12}.\qedhere
\end{align*}
\end{proof}

\section{An estimate for the Weyl integration formula}
Let $T := Z_G(\gamma)$ be the maximal torus containing $\gamma$. Let $n := \dim \A_a$.\\
In this section we want to give an estimate of the Weyl integration formula. To be precise, we will show that for every $f\in C_c^\infty(G)$ there exists a $C\in \R$ such that for all semi-simple regular $\gamma \in Z_G(S)$ the following inequality holds:

$$|\int_{T\backslash G} f(g^{-1}\gamma g) dg| \leq C(ht(\tilde{\Phi})sd(\gamma)+1)^n|D(\gamma)|^{-\frac12}.$$

For $g\in G$ define 
$$B(g) := \{ x\in B_e(G) \mid d_g(x)=d(g)\}.$$

Let $g\in G$ and $x\in B(g)$. We will first give an estimate in the case that $f := 1_{gP_[x,gx]}$. Let $\gamma \in Z_G(S)\cap gP_{x,gx}$ be a semi-simple regular element. By equation (\ref{diseq}) $d(\gamma)=d(g)=d(\gamma x,x)$, so $x\in B(\gamma)$. For simplicity we estimate the integral of $1_{\gamma P_x}$ instead of $1_{gP_{[x,gx]}}$. Let $\phi_{M_\gamma} : B_e(M_\gamma)\rightarrow B_a(M_\gamma)$ be the canonical projection.\\
The relation between the integral and points in the building is due to the fact that if $1_{\gamma P_x}(g^{-1}\gamma g)=1$, then $gx\in B(\gamma)\subset B_e(M_\gamma)$, since
$$d(gx,\gamma gx)=d(gx,g\gamma x)=d(x,\gamma x)=d(\gamma).$$
So we need to identify the elements in $Gx\cap B(\gamma)$. Or more precisely, the $T$-orbits in $Gx\cap B(\gamma)$, because we are integrating over $T\backslash G$. To give an upper-bound for the number of $T$-orbits in $Gx\cap B(\gamma)$, we look at $B_{a,x}(\gamma)=\phi_{M_\gamma}(Gx\cap B(\gamma))$. Now $B_{a,x}(\gamma)$ consists of $\gamma$-fixed points. After some technicalities we get an upper-bound for the number of $T$-orbits of $\gamma$-fixed points. This upper-bound can certainly be improved, since it takes the measure on $T\backslash G$ into account.\\

Let $F$ be the fundamental domain of $T$ in $\A_e$ defined by 
$$F := \{ x\in \A_e \mid \forall{t\in T}\;[d(x,\oo)\leq d(x,t\oo)] \}.$$
\begin{mdef}
Let $x\in \A_a$ and $z\in B_a$, then $z$ is called above $x$ if $x$ is a vertex and $d(x,z)\leq d(v,z)$ for all vertices $v\in \A_a$.\\ 
Let $x+ y \in \A_e$ be a vertex, with $x\in \A_a$ and $y\in Y$. Let $z\in B_e$. Then $z$ is called above $x+ y$ and $x+ y$ is called below $z$ if $\phi(z)$ is above $\phi(x)$ and $d(z,x+ y)\leq d(z,x+ y')$ for all $y'\in Y$. 
\end{mdef}
\begin{lem}\label{y=y'}
 Let $x+ y, z+ y'\in \A_e$, with $x,z\in\A_a$ and $y,y'\in Y$. If $u (z+ y')$ is above $x+ y$ for $u\in U_x = U_{x+ y}$, then $y'=y$.
\end{lem}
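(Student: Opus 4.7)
The plan is to exploit two structural facts: that $U_x = U_{x+y}$ pointwise stabilizes every point in the fiber $\phi^{-1}(x)$, and that the metric on $B_e$ splits orthogonally along $\A_a \oplus Y$ in every apartment.

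First I would observe that $u \in U_x$ fixes $x + y''$ for every $y'' \in Y$, not merely $x + y$. Indeed, $U_x$ is generated by the root subgroups $U_{\alpha, \langle x,-\alpha\rangle}$, and each such subgroup fixes the affine hyperplane $\{p \in \A_e : \alpha(p) = \alpha(x)\}$ in $\A_e$. Since every root $\alpha \in \Phi$ vanishes on $Y$, we have $\alpha(x+y'') = \alpha(x)$ for all $y'' \in Y$, so the whole fiber $\{x + y'' : y'' \in Y\}$ lies in this hyperplane. Hence $u(x+y'') = x + y''$ for every $y''$. This matches the fact that $U_x = U_{x+y}$ as subgroups of $G$.

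Second, I would use this together with the fact that $u$ acts as an isometry of $B_e$: for every $y'' \in Y$,
\[d(u(z+y'),\, x+y'') = d(z+y',\, u^{-1}(x+y'')) = d(z+y',\, x+y'').\]
Both $z+y'$ and $x+y''$ lie in the single apartment $\A_e = \A_a \oplus Y$, so the orthogonal decomposition of the metric recorded at the end of Section~3 gives
\[d(z+y',\, x+y'')^2 = d_{\A_a}(z,x)^2 + \|y' - y''\|^2.\]

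Finally, I apply the definition of "above": since $u(z+y')$ is above $x+y$, we have $d(u(z+y'), x+y) \leq d(u(z+y'), x+y'')$ for every $y'' \in Y$. Combining with the previous step, this reduces to $\|y'-y\|^2 \leq \|y' - y''\|^2$ for all $y'' \in Y$, and choosing $y'' = y'$ forces $\|y'-y\| = 0$, i.e.\ $y = y'$. The only genuinely non-trivial step is the first: making explicit that root groups act trivially in the $Y$-direction because roots annihilate $X_*(Z(G))$. Everything after that is an orthogonal-projection argument inside the single apartment $\A_e$.
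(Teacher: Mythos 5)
Your proof is correct and follows the same route as the paper: observe that $u$ fixes $x+y''$ for every $y''\in Y$ because $U_x = U_{x+y''}$, use $G$-invariance of the metric to rewrite $d(u(z+y'),x+y'')=d(z+y',x+y'')$, invoke the orthogonal splitting $\A_e = \A_a \oplus Y$, and minimize over $y''$. You simply make explicit two steps that the paper leaves implicit — why $U_x$ fixes the whole $Y$-fiber (roots annihilate $X_*(Z(G))$), and the final minimization — which is a welcome clarification rather than a different approach.
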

\begin{proof}
Let $y''\in Y$. Since $u\in U_x=U_{x+ y''}$,
$$d( u(z+ y'),x+ y'') = d( z+ y', x+ y'').$$ Now $d (z+ y',x+ y'') = (d(z,x)^2+d(y',y'')^2)^{\frac12}$. Therefore $y=y'$.
\end{proof}

\begin{lem}\label{bermaat}
 Let $G$ and $H$ be unimodular groups such that $H$ is a closed subgroup of $G$. Let $K$ be an open compact subgroup of $G$. Suppose that the measures of $G$, $H\backslash G$ and $H$ are chosen in such a way that $\mu_H(H\cap K)=\mu_{H\backslash G}(HK)=\mu_G(K)=1$. Then, for any $g\in G$, \[\mu_{H\backslash G}(HgK) := \frac{[H\cap K: H\cap K \cap gKg^{-1}]}{[H\cap gKg^{-1} : H\cap K\cap gKg^{-1}]}.\]
\end{lem}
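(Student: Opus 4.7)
The plan is to apply the quotient integral formula. Since $G$ and $H$ are both unimodular, there is a unique $G$-invariant measure on $H\backslash G$ such that
\[\int_G \phi(x)\,dx = \int_{H\backslash G}\int_H \phi(hx)\,dh\,d\bar{x}\]
for every $\phi \in C_c(G)$. With the given normalizations the three measures are compatible in this formula: a direct check with $\phi = 1_K$, using that any $g \in HK$ can be written as $h_0 k_0$ and that right translation by $h_0 \in H$ preserves $\mu_H$, shows that both sides evaluate to $1$, so the quotient measure appearing in the formula is precisely the one assumed.

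Next I would apply the formula to $\phi = 1_{gK}$. The left-hand side equals $\mu_G(gK) = \mu_G(K) = 1$ by left invariance. For the inner integral, fix a representative $x$ of a coset $\bar{x} \in H\backslash G$. The set $\{h \in H : hx \in gK\}$ is empty unless $\bar{x} \in HgK$; if $\bar{x} \in HgK$, write $x = h_0 g k_0$ and observe that $hx \in gK$ is equivalent to $hh_0 \in gKg^{-1}$, so by right invariance of $\mu_H$ the measure of this set equals $\mu_H(H \cap gKg^{-1})$, independent of the chosen representative. Substituting back into the quotient formula gives
\[1 = \mu_{H\backslash G}(HgK)\cdot \mu_H(H \cap gKg^{-1}).\]

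Finally I would rewrite $\mu_H(H \cap gKg^{-1})$ as the required ratio of indices. Since $K$ is compact and open in $G$, both $H \cap K$ and $H \cap gKg^{-1}$ are compact open subgroups of $H$ containing the common open subgroup $H \cap K \cap gKg^{-1}$ with finite index. The normalization $\mu_H(H \cap K) = 1$ yields $\mu_H(H \cap K \cap gKg^{-1}) = [H \cap K : H \cap K \cap gKg^{-1}]^{-1}$, so
\[\mu_H(H \cap gKg^{-1}) = \frac{[H \cap gKg^{-1} : H \cap K \cap gKg^{-1}]}{[H \cap K : H \cap K \cap gKg^{-1}]},\]
and inverting produces the stated identity for $\mu_{H\backslash G}(HgK)$.

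The only genuine subtlety is the first step, verifying that the three prescribed normalizations are mutually consistent with the quotient integral formula (otherwise an unknown proportionality constant would appear); after that the argument is bookkeeping with left and right invariance of the Haar measures.
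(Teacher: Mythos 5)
Your argument is correct and follows essentially the same route as the paper: compute the inner $H$-integral of $1_{gK}$, show it equals $\mu_H(gKg^{-1}\cap H)$ on the support $HgK$, compare with $\int_G 1_{gK}=1$ via the quotient integral formula, and convert the Haar measure value to the ratio of indices using the normalization $\mu_H(H\cap K)=1$. The only (welcome) addition is your explicit check that the three prescribed normalizations are mutually compatible with the quotient integral formula, a point the paper leaves implicit.
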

\begin{proof}
See \cite[II.3.9]{RE10} for a proof of the existence of a $G$-invariant measure on $H\backslash G$.
\[
 \int_H 1_{gK}(hg) dh = \int_H 1_{gKg^{-1}}(h)dh = \mu_H(gKg^{-1}\cap H).
\]
Thus $\int_H 1_{gK}(hg')dh = \mu_H(gKg^{-1}\cap H)1_{HgK}(g')$ for all $g'\in G$.\\
By the choice of the measure on $H$ we have:
\[
 \mu_H(gKg^{-1}\cap H) = \frac{[gKg^{-1}\cap H:K\cap gKg^{-1}\cap H]}{[K\cap H:K\cap gKg^{-1}\cap H]}.
\]
Since $\int_{H\backslash G} \int_H 1_{gK}(hx)dhdx = \int_G 1_{gK}(x)dx=\mu_G(gK)=1$,
\begin{align*}
\int_{H\backslash G} 1_{HgK}(x)dx &= \frac{1}{\mu_H(gKg^{-1}\cap H)}\int_{H\backslash G} \int_H 1_{gK}(hx)dhdx\\
&=\frac{[K\cap H:K\cap gKg^{-1}\cap H]}{[gKg^{-1}\cap H:K\cap gKg^{-1}\cap H]}. \qedhere
\end{align*}
\end{proof}

Let $K := P_x$. Take the measures on $G$, $T$ and $T\backslash G$ as in Lemma \ref{bermaat}. 
\begin{align*}
 L &:= \{ y\in B_e \mid y \text{ is above a vertex of } F\}.\\
 L_\gamma & := \{ y\in L \mid y\in Gx \text{ and } y\in \phi_{M_\gamma}^{-1}(B_a^\gamma)\}. 
\end{align*}
\begin{lem}\label{intnaarly}
 $\int_{T\backslash G} 1_{\gamma P_x}(g^{-1}\gamma g)dg \leq |L_\gamma|$.
\end{lem}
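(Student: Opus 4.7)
My plan is to rewrite the left-hand side as a measure, decompose it along $T$-orbits in the building, and inject these orbits into $L_\gamma$.

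Set $A := \{g \in G : g^{-1}\gamma g \in \gamma P_x\}$, so the left-hand side equals $\mu_{T\backslash G}(A)$. Unwinding the condition, $g \in A$ is equivalent to $g\gamma x = \gamma g x$, and by the observation in the paragraph preceding the lemma this forces $gx \in B(\gamma) \subseteq \phi_{M_\gamma}^{-1}(B_a^\gamma)$. Hence $\Psi(A) := \{gx : g \in A\}$ lies in $Gx \cap \phi_{M_\gamma}^{-1}(B_a^\gamma)$, and the $\Psi$-fibres are right cosets of $H := P_x \cap P_{\gamma x}$ (since for $g \in A$ and $k \in P_x$ one has $gk \in A$ iff $k$ also fixes $\gamma x$). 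Grouping these fibres by $T$-orbits on $\Psi(A)$ partitions $A$ into $T$--$H$ double cosets
\[ A = \bigsqcup_{[y] \in T\backslash \Psi(A)} T g_y H, \]
so that $\mu_{T\backslash G}(A) = \sum_{[y]} \mu_{T\backslash G}(T g_y H)$.

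Next I assign to each $T$-orbit $[y]$ a point of $L_\gamma$. Using the decomposition $B_e \cong B_a \times Y$, write $y \in B_e$ as $y_a + y_Y$, let $v_a$ be the closest $\A_a$-vertex to $\phi(y) = y_a$, and set $v := v_a + y_Y \in \A_e$; this is the vertex below $y$. Since $T \subseteq Z_G(S)$ acts on $\A_e$ by the translations $v(T) \subseteq X_*(S)\otimes \R$, and $F$ is the Dirichlet fundamental domain of this action around $\oo$, some $t \in T$ sends $v$ into $F$. The translate $ty$ then lies above $tv \in F$, so $ty \in L$; moreover $\phi_{M_\gamma}(ty) = t\phi_{M_\gamma}(y)$ is still $\gamma$-fixed since $\gamma$ and $t$ commute in $T$, so $ty \in L_\gamma$. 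Lemma \ref{y=y'} is invoked to guarantee that the $Y$-component of the below vertex is determined by $y$, so distinct $T$-orbits land on distinct points of $L_\gamma$.

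Finally, for each $T$-orbit $[y]$ I bound $\mu_{T\backslash G}(T g_y H) \leq 1$. Applying Lemma \ref{bermaat} with $K = P_x$ and the chosen normalizations $\mu_G(P_x) = \mu_T(T \cap P_x) = \mu_{T\backslash G}(TP_x) = 1$, and using the observation that since $\gamma \in T$ is central in $T$ any $t \in T \cap P_x$ automatically fixes $\gamma x$ (so $T \cap H = T \cap P_x$), one gets enough cancellation among the indices in the double-coset measure formula to conclude. Summing the per-orbit bounds over the injection $[y] \mapsto L_\gamma$ then yields $\mu_{T\backslash G}(A) \leq |L_\gamma|$, as desired. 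The main obstacle is precisely this measure bound: Lemma \ref{bermaat} as stated only covers $T$--$P_x$ double cosets, while here one needs it for the smaller $T$--$H$ double cosets, so a careful comparison of $T \cap g_y P_x g_y^{-1}$ with $T \cap P_x$ is needed to keep the bound sharp at $1$. A secondary subtlety is that a $T$-orbit can meet several vertices on the boundary of $F$, so the map to $L_\gamma$ may fail to be strictly injective there; since the conclusion is an inequality, this only helps.
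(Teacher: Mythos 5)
Your overall strategy mirrors the paper's: pass from the integral to a sum over double cosets, relate the condition $g^{-1}\gamma g \in \gamma P_x$ to the point $gx$ landing in the $\gamma$-fixed set, translate by $T$ into the fundamental domain to land in $L_\gamma$, and invoke Lemma \ref{bermaat} to control the measure. The decomposition into $T$--$H$ double cosets with $H = P_x\cap P_{\gamma x}=P_{[x,\gamma x]}$, and the identification of those cosets with $T$-orbits on $\Psi(A)$, are correct and essentially reproduce the paper's (\ref{in1})--(\ref{in2}).

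The gap is exactly where you flag it: the claim $\mu_{T\backslash G}(Tg_yH)\leq 1$ is false in general, and no amount of "cancellation" will rescue it. Consider the case where $\gamma$ is compact, so $\gamma x = x$ and $H=P_x$. Then by Lemma \ref{bermaat} (with the normalization $\mu_T(T\cap P_x)=1$) one has, whenever $T\cap g P_x g^{-1}\subset T\cap P_x$,
\[
\mu_{T\backslash G}(TgP_x)=\bigl[T\cap P_x : T\cap gP_xg^{-1}\bigr]=\bigl|(T\cap P_x)\,gx\bigr|,
\]
which is the size of the $(T\cap P_x)$-orbit of $gx$ and can be arbitrarily large as $gx$ moves deeper into the $\gamma$-fixed subset of the building (already for $SL_2$, with $gx$ at distance $d$ from the apartment and $D(\gamma)$ small enough, this is $q^{d}>1$). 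The observation that $T\cap H = T\cap P_x$ is true but does not help: the denominator in Lemma \ref{bermaat} involves $T\cap g P_x g^{-1}$, which shrinks as $gx$ leaves the apartment, inflating the measure above $1$. The correct bound, and the one the paper uses in (\ref{in4}), is $\mu_{T\backslash G}(TgP_x)\leq |(T\cap P_x)gx|$. The reason this still sums to $|L_\gamma|$ is that a $T$-orbit on $\Psi(A)$ does \emph{not} inject into a single point of $L_\gamma$; it meets $L_\gamma$ in an entire $(T\cap P_x)$-orbit (since $T\cap P_x$ fixes $F$ pointwise and commutes with $\gamma$), of size precisely $|(T\cap P_x)gx|$. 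So each $T$-orbit contributes $|(T\cap P_x)gx|$ both to the measure bound and to $|L_\gamma|$; your per-orbit bound of $1$ undercounts both sides and the two errors do not compensate. To repair the argument, replace the final step by the bound $\mu_{T\backslash G}(Tg_yH)\leq\mu_{T\backslash G}(Tg_yP_x)\leq|(T\cap P_x)g_y x|$ and observe that $(T\cap P_x)g_y x\subset L_\gamma$, with distinct $T$-orbits giving disjoint $(T\cap P_x)$-orbits in $L_\gamma$; this is exactly the paper's (\ref{in4})--(\ref{in5}).
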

\begin{proof}
We will prove the following inequalities:
\begin{align}
 \int_{T\backslash G} 1_{\gamma P_x}(g^{-1}\gamma g)dg &= \sum_{g\in T\backslash G/P_{[x,\gamma x]}} 1_{P_x}(\gamma^{-1}g^{-1}\gamma g)\mu_{T\backslash G}(TgP_{[x,\gamma x]})\label{in1}\\
 &\leq \sum_{g\in T\backslash G/P_x} \mu_{T\backslash G}(TgP_x) 1_x(gx)\label{in2} \\
 &\leq \sum_{g\in (T\cap P_x)\backslash G/P_x \mid gx\in L_\gamma} \mu_{T\backslash G}(TgP_x) \label{in3}\\
 &\leq \sum_{g\in (T\cap P_x)\backslash G/P_x \mid gx\in L_\gamma} |(T\cap P_x)gx|\label{in4}\\
 &= |L_\gamma|.\label{in5}
\end{align}
Since for $g'\in TgP_{[x,\gamma x]}$ we have 
$$\gamma g x= g\gamma x \Leftrightarrow \gamma g' x= g'\gamma x,$$
the function $g\mapsto 1_{\gamma P_x}(g^{-1}\gamma g)$ is constant on double cosets $T\backslash G/P_{[x,\gamma x]}$. Therefore we have equality (\ref{in1}).\\
Define $1_x : B_e \rightarrow \R$ by $$1_x(y) := \left\{ \begin{array}{cc} 1 & \exists g\in G[ y=gx \wedge \gamma g x = g\gamma x]\\ 0 &\text{otherwise.}\end{array}\right.$$
Now $1_x(gx)=1$ if and only if there exists an $h\in gP_x$ such that $1_{P_x}(\gamma^{-1}h^{-1}\gamma h)=1$. Also $1_x(y)=1_x(ty)$ for all $t\in T$. So 
\begin{multline*}
\sum_{h\in T\backslash TgP_x/P_{[x,\gamma x]}}1_{P_x}(\gamma^{-1}h^{-1}\gamma h)\mu_{T\backslash G}(ThP_{[x,\gamma x]})\\
\leq \sum_{h\in T\backslash TgP_x/P_{[x,\gamma x]}}1_{x}( hx)\mu_{T\backslash G}(ThP_{[x,\gamma x]}) = 1_{x}(gx)\mu_{T\backslash G}(TgP_x).
\end{multline*}
This gives inequality (\ref{in2}).\\
For every coset $Tg$ there exists a $g'\in Tg$ such that $g'x\in L$. If moreover $1_x(gx)=1$, then $g'x\in B(\gamma)$. So $g'x\in L_\gamma$ and inequality (\ref{in3}) follows.\\
From Lemma \ref{bermaat} and $gP_xg^{-1}=P_{gx}$ we get inequality (\ref{in4}):
\begin{align*}
\mu_{T\backslash G}(TgP_x) &= \frac{[T\cap P_x : T\cap P_x\cap gP_xg^{-1}]}{[gP_xg^{-1}\cap T: T\cap P_x\cap gP_xg^{-1}]}\\&\leq [T\cap P_x : T\cap P_x\cap gP_xg^{-1}]=|(T\cap P_x) gx|.
\end{align*}

The group $T\cap P_x$ fixes $\A_e$ pointwise and commutes with $\gamma$, so it acts on $L_\gamma$. So the sum in (\ref{in4}) is over the $(T\cap P_x)$-orbits in $L_\gamma$. Each orbit contributes to the sum the number of elements in that orbit. Thus the sum is the number of elements in $L_\gamma$. Therefore equality (\ref{in5}) holds.
\end{proof}

\subsection{$\gamma$-fixed points in the reduced building}
In this subsection we assume that $\gamma \in Z_G(S)$ is a compact semi-simple regular element of $T$.\\
Define $\Phi := \Phi(G,S)$ and $\tilde{\Phi} := \Phi(\cg,\ct)$. Let $\rho : \tilde{\Phi}\rightarrow \Phi\cup\{0\}$ be the canonical projection. Define $n := \dim \A_a$.\\
The goal of this section is to prove the following Theorem:
\begin{ste}\label{Lgrens}
 There is a $c\in\R$ such that for all vertices $x\in \A_a$ and $\gamma \in T\cap P_\oo$ the following holds. The number of vertices fixed by $\gamma$ above $x$ is bounded by $c (ht(\tilde{\Phi})sd(\gamma)+1)^n|D(\gamma)|^{-\frac12}$.
\end{ste}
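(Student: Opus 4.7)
The plan is to leverage Theorem \ref{ugrens} by expressing each $\gamma$-fixed vertex above $x$ in the form $ux$ for $u\in U^+\cap P_y$, with $y$ ranging over a controlled set of auxiliary vertices in $\A_a$, and summing the resulting bounds. The two factors in the claimed bound will arise separately: the $|D(\gamma)|^{-\frac12}$ from Theorem \ref{ugrens} applied to a single $(U^+\cap P_y)$-orbit, and the polynomial $(ht(\tilde\Phi)sd(\gamma)+1)^n$ from counting the auxiliary vertices $y\in\A_a$ that matter.

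First, since $\gamma\in Z_G(S)\cap P_\oo$ is compact, it fixes $\A_a$ pointwise. I would use a Bruhat--Tits-style decomposition to write any vertex $z\in B_a$ above $x$ as $ux$ for some $u\in U^+$, and then organise such $u$ by a ``smallest'' vertex $y\in \A_a$ (in the Weyl-chamber order) such that $u\in U^+\cap P_y$. For each fixed $y$, Theorem \ref{ugrens} bounds the number of $\gamma$-fixed elements of $(U^+\cap P_y)\cdot x$ by $|D(\gamma)|^{-\frac12}$, so the problem reduces to bounding the number of $y$'s.

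Second, I would show that the auxiliary $y$'s for which $(U^+\cap P_y)\cdot x$ can contain a $\gamma$-fixed vertex above $x$ lie in a ball of radius $O(ht(\tilde\Phi)sd(\gamma))$ around $x$. The mechanism is the commutator analysis of Lemma \ref{ualem}: if $\gamma ux=ux$ then for each $\alpha\in\Phi^+$ the commutator $[u^{-1},\gamma]_\alpha$ must lie in $U_{\alpha,-\alpha(x)}$, and this forces the affine-root coordinates of $u$ (hence of $y-x$) to satisfy bounds in terms of $v(\tilde\beta(\gamma)-1)\leq sd(\gamma)$ for $\tilde\beta\in\tilde\Phi$. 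The height factor $ht(\tilde\Phi)$ appears because the induction on the height of positive roots (as in the proof of Theorem \ref{ugrens}) accumulates the shift coming from the Chevalley commutator formula, so each step of height growth may enlarge the admissible region by at most $sd(\gamma)$.

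Third, the number of vertices of $\A_a$ in a ball of radius $R$ around $x$ is bounded by $c_bR^n$ for a constant $c_b$ depending only on the affine building, giving at most $c_b(ht(\tilde\Phi)sd(\gamma)+1)^n$ admissible $y$'s. Multiplying by the per-orbit bound from Theorem \ref{ugrens} yields the stated inequality, with constants uniform in $x$ by homogeneity of the apartment. The main obstacle is step two: distilling from Lemma \ref{ualem} and the affine-root geometry a precise geometric constraint on $y-x$ in terms of $ht(\tilde\Phi)sd(\gamma)$, and verifying that this constraint is uniform in $x$ and $\gamma$, so that a single constant $c$ suffices for all $\gamma\in T\cap P_\oo$ and all vertices $x\in\A_a$ at once.
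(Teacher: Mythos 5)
Your high-level strategy matches the paper's: obtain the $|D(\gamma)|^{-\frac12}$ factor by applying Theorem \ref{ugrens} to a single unipotent orbit, and obtain the polynomial factor by bounding the number of admissible auxiliary apartment points. But the decomposition you set up in step one does not actually parametrize the vertices above $x$. You propose to write each $\gamma$-fixed vertex $z$ above $x$ as $z = ux$ with $u\in U^+\cap P_y$ for some auxiliary $y\in\A_a$; that is, the orbit base point is the given vertex $x$ while the group's fixing point $y$ varies. This is backwards. Already in the rank-one tree, an element $u\in U^+\cap P_y$ that does not fix $x$ carries $x$ to a vertex lying above the apartment vertex nearest to the fixed-point half-line of $u$, which is strictly between $x$ and $y$; the only vertex of $(U^+\cap P_y)\cdot x$ that is above $x$ is $x$ itself. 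So your parametrization omits every nontrivial vertex above $x$, and the commutator analysis in your step two has nothing to act on.

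What the paper does, via Theorem \ref{representanten}, is the dual decomposition: each $z$ above $x$ is written $z = uy$ with $y$ an auxiliary point in a Weyl cone based at $x$, and $u\in U^-\cap P_x$, i.e.\ the opposite unipotent radical fixing the given vertex $x$. So the group is attached to $x$ and the orbit base point $y$ moves. Theorem \ref{ugrens} (with its two apartment points relabeled and the positive system flipped) then gives the $|D(\gamma)|^{-\frac12}$ bound per orbit. The bound on the admissible $y$'s is Lemma \ref{grenzen}, which rests on the estimate $v(u_{-\beta})\geq\beta(y)-ht(\beta)\,sd(\gamma)$ for $\gamma$-fixed points from \cite[Proposition 4.2]{MS12} rather than on Lemma \ref{ualem}: if $y$ is more than $ht(\tilde{\Phi})sd(\gamma)+1$ away from $x$ in some simple-root direction $\alpha_i$, then $u$ is forced to also fix $x+c_ia_i$, so $uy$ is strictly closer to $x+c_ia_i$ than to $x$ and hence not above $x$. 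Your step three (counting apartment vertices in the resulting box) is fine and matches the paper; the gap is entirely in the parametrization in step one, which must be corrected before the rest of your argument can proceed.
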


Let $C$ be a Weyl chamber of $\A_a$ with vertex $\oo$, $\mathcal{C}$ the cone of $C$, $\Delta = \{\alpha_1,\ldots,\alpha_n\}$ the set of simple roots associated to $\mathcal{C}$ and $\Phi^+$ the set of positive roots.\\
Define for each simple root $\alpha_i$ a vertex $a_i$ in $\A_a$ in the following way. Let $\Gamma\subset \Delta$ be the connected part of $\alpha_i$ in the Dynkin diagram. Let $\beta_0 := \sum_{\alpha_j\in\Gamma} c_j \alpha_j$ be the longest positive root in the root system generated by $\Gamma$. Define $a_i$ to be the vertex in $\A_a$ such that $\alpha_j(a_i)= \frac{\delta_{ij}}{c_i}$.

\begin{lem}\label{gwva}
  For all $i\in \{1,\ldots,n\}$ one has $d(\oo,x+ta_i)>d(\oo,x)$ for $t\in\R_{>0}$ and $x\in \mathcal{C}$.
\end{lem}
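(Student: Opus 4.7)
The plan is to expand the squared distance using the explicit $W$-invariant inner product $\langle v,w\rangle = \sum_{\alpha\in\Phi^+}\alpha(v)\alpha(w)$ already defined on $\A_a$, and show that the cross term is non-negative while the quadratic term is strictly positive. Concretely, since $\oo$ is the origin, writing
\[
d(\oo,x+ta_i)^2 - d(\oo,x)^2 \;=\; 2t\,\langle x,a_i\rangle + t^2\,\langle a_i,a_i\rangle,
\]
the goal reduces to verifying that (a) $\langle x,a_i\rangle \geq 0$ for every $x\in\mathcal{C}$ and (b) $\langle a_i,a_i\rangle>0$.

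For (a), I would expand any positive root $\alpha\in\Phi^+$ as a non-negative integer combination of simple roots $\alpha=\sum_j m_j\alpha_j$ with $m_j\geq 0$. Then by the defining property $\alpha_j(a_i)=\delta_{ij}/c_i$, we get $\alpha(a_i)=m_i/c_i\geq 0$. Since $x\in\mathcal{C}$ means $\alpha(x)\geq 0$ for every $\alpha\in\Phi^+$, each term $\alpha(x)\alpha(a_i)$ in the sum defining $\langle x,a_i\rangle$ is non-negative, so $\langle x,a_i\rangle\geq 0$.

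For (b), observe that $\alpha_i\in\Phi^+$ satisfies $\alpha_i(a_i)=1/c_i>0$, so $a_i\neq 0$; since $\langle\cdot,\cdot\rangle$ has already been shown to be positive definite, $\langle a_i,a_i\rangle>0$. Combining (a) and (b) with $t>0$, the cross term $2t\,\langle x,a_i\rangle$ is $\geq 0$ and the quadratic term $t^2\langle a_i,a_i\rangle$ is strictly positive, so the difference is strictly positive and the inequality follows.

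There is no real obstacle here: the only thing to be careful about is making sure that $a_i$ lies in the closed cone $\overline{\mathcal{C}}$ (equivalently, that $\alpha(a_i)\geq 0$ for every $\alpha\in\Phi^+$), which is automatic from the definition of $a_i$ as a positive multiple of the $i$-th fundamental coweight, and that the inner product is the one from the paper rather than a normalised Killing form — both are handled by directly using the summation formula defining $\langle\cdot,\cdot\rangle$.
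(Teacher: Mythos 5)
Your proof takes essentially the same route as the paper's: both reduce the claim to the explicit expansion $d(\oo,y)^2=\sum_{\alpha\in\Phi^+}\alpha(y)^2$ and compare term by term using $\alpha(x)>0$ and $\alpha(a_i)\geq 0$. If anything your version is slightly more careful than the paper's one-liner: the paper asserts $\alpha(ta_i)>0$ for \emph{all} $\alpha\in\Phi^+$, which is not literally true (e.g.\ $\alpha_j(a_i)=0$ for $j\neq i$), whereas you correctly record $\alpha(a_i)\geq 0$ and obtain strictness from positive definiteness together with $a_i\neq 0$; that patch is exactly what is needed.
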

\begin{proof}
 Recall that $d(\oo,x)=\left<x,x\right> = \sum_{\alpha\in \Phi^+}\alpha(x)^2$. Since $\alpha(x)>0$ and $\alpha(ta_i)>0$ for all $\alpha\in\Phi^+$, $\sum_{\alpha\in \Phi^+}\alpha(x+ta_i)^2 > \sum_{\alpha\in\Phi^+}\alpha(x)^2$.
\end{proof}

\begin{lem}\label{grenzen}
Let $x\in \A_a$ be a vertex. Assume that for $y = x+\sum_{j=1}^n n_jc_ja_j\in x+\mathcal{C}$ one has $n_i=\alpha_i(y-x)\geq ht(\tilde{\Phi})sd(\gamma)+1$ for some $i\in \{1,\ldots,n\}$. Let $u\in U^-\cap U_x$. If $uy$ is fixed by $\gamma$, then $d(uy,x+c_ia_i)< d(uy,x)$. So if $uy$ is fixed by $\gamma$, then $uy$ is not above $x$.
\end{lem}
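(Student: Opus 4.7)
The strategy is to show that the $\gamma$-fixedness of $uy$, combined with the hypothesis $n_i\geq ht(\tilde\Phi)sd(\gamma)+1$, forces $u$ to fix not only $x$ but the neighbouring vertex $x+c_ia_i$; the conclusion then drops out of a short distance comparison using $G$-invariance of the metric.

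Since $\gamma$ is compact it fixes $\A_a$ pointwise (as in the proof of Theorem \ref{ugrens}), so $\gamma y=y$ and the condition $\gamma uy=uy$ becomes $[u^{-1},\gamma^{-1}]\in P_y$; because $u\in U^-$ and $\gamma\in T\subset Z_G(S)$ normalises each root subgroup, this commutator lies in $U^-\cap U_y$. Write $u=\prod_{\alpha\in\Phi^-}u_\alpha$ and parametrise each $u_\alpha$ by its $\tilde\alpha$-components $u_{\tilde\alpha}$ with $\tilde\alpha\in\rho^{-1}(\alpha)\cup\rho^{-1}(2\alpha)$ via Corollary \ref{visg}. An inductive analysis on the absolute height of $\alpha$---entirely parallel to the counting induction in the proof of Theorem \ref{ugrens} and employing Lemma \ref{ualem}---exploits the fact that the $\alpha$-component of $[u^{-1},\gamma^{-1}]$ equals the linear term $(\tilde\alpha(\gamma)^{-1}-1)u_{\tilde\alpha}$ plus brackets of strictly lower-height components. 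Propagating the valuation bounds level by level (losing at most $sd(\gamma)$ per level) yields
\[
v(u_{\tilde\alpha}) \;\geq\; -\alpha(y) - ht(\tilde\Phi)\,sd(\gamma)
\]
for every $\tilde\alpha$ above every $\alpha\in\Phi^-$; the $ht(\tilde\Phi)$-factor is precisely the aggregate error accumulated along this induction.

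Write $\alpha=-\sum_j m_j(\alpha)\alpha_j$ for $\alpha\in\Phi^-$ with $m_j(\alpha)\in\Z_{\geq 0}$, so that $-\alpha(x+c_ia_i)=-\alpha(x)+m_i(\alpha)$. If $m_i(\alpha)=0$, the requirement $v(u_{\tilde\alpha})\geq -\alpha(x+c_ia_i)$ is just $u\in U_x$; if $m_i(\alpha)\geq 1$, combine $-\alpha(y-x)=\sum_j m_j(\alpha)n_j\geq m_i(\alpha)n_i$ with the hypothesis to get
\[
v(u_{\tilde\alpha}) \;\geq\; -\alpha(x) + m_i(\alpha)\,n_i - ht(\tilde\Phi)\,sd(\gamma) \;\geq\; -\alpha(x) + m_i(\alpha),
\]
where the last step uses $m_i(\alpha)(n_i-1)\geq n_i-1\geq ht(\tilde\Phi)sd(\gamma)$. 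Hence every factor of $u$, and therefore $u$ itself, lies in $U_{x+c_ia_i}$.

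Because $u$ fixes both $x$ and $x+c_ia_i$, the $G$-invariance of $d$ yields $d(uy,x)=d(y,x)$ and $d(uy,x+c_ia_i)=d(y,x+c_ia_i)$, reducing the claim to the geometric inequality $d(y,x+c_ia_i)<d(y,x)$. Expanding in the $W$-invariant inner product and using $\alpha_j(c_ka_k)=\delta_{jk}$ with $y-x=\sum_k n_kc_ka_k$, the quantity $d(y,x)^2-d(y,x+c_ia_i)^2=2\langle y-x,c_ia_i\rangle-\|c_ia_i\|^2$ already picks up the strictly positive contribution $2n_i-1>0$ from the simple root $\alpha_i$ alone, while every other $\alpha\in\Phi^+$ contributes non-negatively---this is in the spirit of Lemma \ref{gwva}. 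The final assertion is immediate, since being above $x$ would require $x$ to minimise distance to $uy$ among vertices of $\A_a$, contradicted by the strictly closer $x+c_ia_i$. The main obstacle is the height induction: controlling the higher-order bracket contributions while maintaining a linear bound on each $v(u_{\tilde\alpha})$ is both where real care is needed and precisely what produces the $ht(\tilde\Phi)$-factor in the hypothesis.
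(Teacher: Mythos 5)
Your argument is correct and follows the same skeleton as the paper's: derive a lower bound on $v(u_{\tilde\alpha})$ from $\gamma$-fixedness of $uy$, combine it with the hypothesis $n_i\geq ht(\tilde\Phi)sd(\gamma)+1$ to show that $u$ fixes $x+c_ia_i$, then deduce the strict distance inequality and hence that $uy$ is not above $x$.

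Two remarks on where you and the paper diverge. First, for the key valuation bound you sketch a height induction on the commutator $[u^{-1},\gamma^{-1}]$, appealing to Lemma \ref{ualem} and the counting argument of Theorem \ref{ugrens}; the paper instead cites this bound directly, in the sharper form $v(u_{-\beta})\geq\beta(y)-ht(\beta)sd(\gamma)$, from \cite[Proposition 4.2]{MS12}. Your sketch is in the right spirit, but Lemma \ref{ualem} is a cardinality estimate rather than a valuation-propagation statement, so invoking it here does not quite give what you need; if you wish to avoid the citation, the induction would have to be carried out from scratch. Second, for the geometric step the paper reduces $d(uy,x+c_ia_i)<d(uy,x)$ to $d(y-c_ia_i,x)<d(y,x)$ and invokes Lemma \ref{gwva}, while you expand the squared distances in the $W$-invariant inner product directly; both are fine, and your computation (simple-root contribution $2n_i-1>0$, all other $\alpha\in\Phi^+$ contributing $\alpha(c_ia_i)(2\alpha(y-x)-\alpha(c_ia_i))\geq 0$) checks out since $n_i\geq 1$. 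Finally, the last sentence of the lemma needs $x+c_ia_i$ to be a vertex of $\A_a$; the paper verifies this by observing $\alpha_k(c_ia_i)\in\N$ for every simple root $\alpha_k$, so that translation by $c_ia_i$ is a simplicial automorphism — you should add this line, as it is used implicitly when you say the minimiser among vertices cannot be $x$.
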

\begin{proof}
Let $\beta\in \tilde{\Phi}$. Since $uy$ is fixed by $\gamma$, $v(u_{-\beta}) \geq \beta(y)-ht(\beta)sd(\gamma)$ \cite[Proposition 4.2]{MS12}. Let $\alpha\in\Phi^+$. Write $$u_{-\alpha}=\prod_{\beta\in\rho^{-1}(\alpha)} u_{-\beta} \prod_{\beta \in \rho^{-1}(2\alpha)}u_{-\beta}.$$

Now $v(u_\alpha)=\min\{v(u_{-\beta}) : \beta\in\rho^{-1}(\alpha)\}\cup\{v(u_{-\beta})/2 : \beta\in\rho^{-1}(2\alpha)\}$.  The lower-bound for $v(u_{-\beta})$ and $\beta(y)=\rho(\beta)(y)$ give that
$$v(u_{-\alpha})\geq \alpha(y)-ht(\tilde{\Phi})sd(\gamma). $$
Now let $\alpha\in\Phi^+$ with a non-zero coefficient for $\alpha_i$ in the decomposition of $\alpha$ as linear combination of the simple roots in $\Delta$. So $\alpha = \sum_{j=1}^n d_j\alpha_j$ and $d_i\geq 1$. 
\begin{align*}
 v(u_{-\alpha}) &\geq \alpha(y)-ht(\tilde{\Phi})sd(\gamma)= \alpha(x)+\alpha(y-x)-ht(\tilde{\Phi})sd(\gamma)\\
 &\geq \alpha(x)+d_i\alpha_i(y-x)-ht(\tilde{\Phi})sd(\gamma)\\
 &\geq \alpha(x)+d_i(ht(\tilde{\Phi})sd(\gamma)+1)-ht(\tilde{\Phi})sd(\gamma)
 \geq \alpha(x)+d_i.
\end{align*}

For all $\alpha\in\Phi^+$ one has $v(u_{-\alpha}) \geq \alpha(x)$, since $u$ fixes $x$. Therefore with the previous inequality $v(u_{-\alpha})\geq \alpha(x+c_ia_i)$ for all $\alpha\in\Phi^+$. We conclude that $u$ fixes $x+c_ia_i$. Hence $$d(uy,x+c_ia_i)=d(y,u^{-1}(x+c_ia_i)) = d(y,x+c_ia_i) = d(y-c_ia_i,x).$$
Since $n_i\geq 1$, $y-c_ia_i\in x+\mathcal{C}$. So by Lemma \ref{gwva} $$d(uy,x+c_ia_i)=d(y-c_ia_i,x)<d(y,x)=d(uy,x).$$
Since $\alpha_k(c_ia_i) = \frac{\delta_{ki}c_i}{c_i} \in \N$ for all simple roots $\alpha_k$, the translation $y\mapsto y+c_ia_i$ is an automorphism of the apartment. So $x+c_ia_i$ is a vertex in $\A_a$.
\end{proof}

For $\alpha\in \Phi$ define $n_\alpha$ to be the smallest $r\in \R_{>0}$ such that $U_{\alpha,r} \not= U_{\alpha,r+}$. For $r\in \R$ define the $\alpha$-ceiling as: $\lceil r\rceil_\alpha := \min \{z\in n_\alpha\Z\mid z\geq r\}$.
\begin{lem}
Let $x\in \A_a$ and $y\in \overline{C}$. There is a system of positive roots $\Phi^{++}$ such that:\\
 $-\alpha(x) \geq -\alpha(y)$ and $\lceil-\alpha(x)\rceil_\alpha\geq \lceil f_C(\alpha)\rceil_\alpha$ for all $\alpha \in \Phi^{++}$\\
 $-\alpha(x) \leq -\alpha(y)$ and $\lceil-\alpha(x)\rceil_\alpha\leq \lceil f_C(\alpha)\rceil_\alpha$ for all $\alpha \in -\Phi^{++}=\Phi^{--}$
\end{lem}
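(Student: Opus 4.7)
My plan is to construct $\Phi^{++}$ as the positive system cut out by a regular direction $v\in\A_a$ chosen in general position.

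First I would reformulate the two conditions in a uniform way. The inequality $-\alpha(x)\geq -\alpha(y)$ is the linear condition $\alpha(x-y)\leq 0$, while the ceiling condition $\lceil -\alpha(x)\rceil_\alpha\geq \lceil f_C(\alpha)\rceil_\alpha$ unwinds to $\alpha(x)<n_\alpha-\lceil f_C(\alpha)\rceil_\alpha$. Both reverse direction under $\alpha\mapsto -\alpha$, so for every pair $\{\alpha,-\alpha\}$ we must pick exactly one representative for which both inequalities hold. The assumption $y\in\overline{C}$ gives $-\alpha(y)\leq f_C(\alpha)$ for every $\alpha$, so the two thresholds to which $-\alpha(x)$ is compared are aligned: a choice of sign that satisfies the first inequality is generically compatible with the second.

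Next I would pick a regular vector $v\in\A_a$, i.e.\ one with $\alpha(v)\neq 0$ for every $\alpha\in\Phi$, as a small generic perturbation of $x-y$, and set $\Phi^{++}:=\{\alpha\in\Phi:\alpha(v)<0\}$. By the standard characterization of positive systems via regular linear functionals, $\Phi^{++}$ is a system of positive roots; closure under root addition is immediate since the defining condition $\alpha(v)<0$ is linear in $\alpha$. Taking the perturbation small enough forces $\alpha(x-y)\leq 0$ for every $\alpha\in\Phi^{++}$, which is the first required inequality, and the analogous statement for $\alpha\in\Phi^{--}=-\Phi^{++}$ follows by replacing $v$ with $-v$.

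The main work is the ceiling inequality. I would argue pair-by-pair over $\{\alpha,-\alpha\}$ that at least one of the two orientations is simultaneously compatible with both inequalities, using that $\oo\in\overline{C}$ (so $f_C(\alpha)\geq 0$) and that the walls of $\overline{C}$ are hyperplanes of the form $\A_{\alpha,k}$ with $k\in n_\alpha\Z$, so the discreteness of $\lceil\cdot\rceil_\alpha$ matches the geometry of $\overline{C}$. The main obstacle is that a generic $v$ built only from $x-y$ may pick the wrong orientation on a pair when the first inequality is near-equality but the second flips under rounding; the resolution is to perturb $v$ further by small multiples along the coroots corresponding to those finitely many offending pairs, which keeps $v$ regular and $\Phi^{++}$ a positive system while repairing each problematic ceiling condition. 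The dual inequalities on $\Phi^{--}$ then follow by symmetry.
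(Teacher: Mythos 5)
Your reduction of the two conditions is correct, and your observation that for each pair $\{\alpha,-\alpha\}$ at least one orientation is compatible with both inequalities is also correct. But the step that you flag as ``the main work'' --- getting a consistent choice across all pairs that forms a positive system --- is exactly where the proposal has a gap, and the ``perturb $v$ by small multiples along coroots for the offending pairs'' fix does not close it.

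The difficulty is that for every root $\alpha$ in the subsystem $\Phi' := \{\alpha : \alpha(x-y)=0\}$, a generic perturbation $v$ of $x-y$ chooses the sign of $\alpha(v)$ arbitrarily, while the ceiling condition forces a \emph{specific} choice: when $\alpha\in\Phi^+$ with $\alpha(y)=n_\alpha$ you must take $-\alpha\in\Phi^{++}$, and when $\alpha(y)=0$ you must take $\alpha\in\Phi^{++}$. You never show that these forced choices on $\Phi'$ extend to a positive system on $\Phi'$, let alone that an arbitrary initial positive system can be steered to one by coroot shifts. Each coroot shift $c\alpha^\vee$ changes $\beta(v)$ for every $\beta$ with $\langle\beta,\alpha^\vee\rangle\neq 0$, and on roots of $\Phi'$ all these quantities are $O(\epsilon)$, so a shift large enough to flip $\alpha$ is the same order as the perturbations it disturbs; you have offered no termination or non-cascading argument. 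In a concrete instance such as $x=y$ in the interior of the alcove, $\Phi'=\Phi$, the correct answer is $\Phi^{++}=\Phi^-$, and a random generic $v$ followed by ad hoc coroot shifts has no mechanism to land there.

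The paper's proof avoids this by making the choice deterministic: for $\alpha\in\Phi^+$, it puts $\alpha\in\Phi^{++}$ exactly when $\alpha(x)<\alpha(y)$ (if $\alpha(y)>0$) or $\alpha(x)\leq\alpha(y)$ (if $\alpha(y)=0$), and checks directly that this set is a positive system by a case analysis on closure under addition. Unwinding this rule shows it is the lexicographic positive system attached to the ordered triple $(y-x,\,-y,\,v_3)$, where $v_3$ cuts out the original $\Phi^+$; the tie when $\alpha(x-y)=0$ is broken by $-y$, and the residual tie when also $\alpha(y)=0$ is broken by $\Phi^+$. That is precisely the structure your ``coroot perturbation'' would have to reproduce, and the fact that it must be $-y$ (not an arbitrary small repair) that breaks the tie is what makes the ceiling inequalities go through. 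To repair your argument, replace the generic perturbation plus coroot tinkering by the deterministic perturbation $v := (y-x) + \epsilon'(-y) + \epsilon'' v_3$ with $0<\epsilon''\ll\epsilon'\ll 1$ and then check, as the paper does, that the resulting positive system satisfies both chains of inequalities.
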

\begin{proof}
First there is a construction of $\Phi^{++}\subset \Phi$, then it will be proven that it is a system of positive roots that satisfies the requirements.
For $\alpha \in \Phi^+$ the following rules decide whether $\alpha \in \Phi^{++}$ or $-\alpha \in \Phi^{++}$.\\

If $0<\alpha(y)\leq n_\alpha$:
\begin{align*}
 \alpha(x) < \alpha(y) &\Rightarrow +\alpha\in \Phi^{++}\\
 \alpha(x)\geq \alpha(y) & \Rightarrow -\alpha\in\Phi^{++}
\end{align*}

If $0=\alpha(y)$:
\begin{align*}
 \alpha(x) \leq \alpha(y) &\Rightarrow +\alpha\in \Phi^{++}\\
 \alpha(x) > \alpha(y) & \Rightarrow -\alpha\in\Phi^{++}
\end{align*}

\noindent By definition of $C$ one has $\lceil f_C(\alpha)\rceil_\alpha = n_\alpha$ if $\alpha \in \Phi^-$ and $0$ if $\alpha \in \Phi^+$.

First we check that the roots of $\Phi^{++}$ satisfy the requirements:\\
Certainly if $\alpha\in \Phi^{++}$, then $-\alpha(x)\geq -\alpha(y)$.\\
Let $\alpha\in \Phi^+$.\\
If $\alpha\in\Phi^{++}$, one has $\alpha(x)< n_\alpha$.\\  
If $\alpha(x)< n_\alpha$, then $\lceil -\alpha(x) \rceil_\alpha \geq 0 = \lceil f_C(\alpha)\rceil_\alpha$ and $\lceil \alpha(x)\rceil_\alpha \leq n_\alpha =\lceil f_C(-\alpha)\rceil_\alpha$.\\
If $-\alpha\in\Phi^{++}$, one has $\alpha(x)>0$.\\
If $\alpha(x)>0$, then $\lceil \alpha(x) \rceil_\alpha \geq n_\alpha = \lceil f_C(-\alpha)\rceil_\alpha$ and $\lceil -\alpha(x) \rceil_\alpha\leq 0=\lceil f_C(\alpha)\rceil_\alpha  $.\\
Thus for $\alpha\in\Phi^{++}$, one has $\lceil-\alpha(x)\rceil_\alpha\geq \lceil f_C(\alpha)\rceil_\alpha$ and for $\alpha\in\Phi^{--}$ one has $\lceil-\alpha(x)\rceil_\alpha\leq \lceil f_C(\alpha)\rceil_\alpha$.\\

By definition of $\Phi^{++}$ if $-\alpha(x)>-\alpha(y)$, then $\alpha\in \Phi^{++}$.\\

Clearly the half of the roots are in $\Phi^{++}$ and $\Phi^{++}\cap -\Phi^{++}=\emptyset$. Therefore it is now enough to show that if $\alpha,\beta\in \Phi^{++}$ and $\alpha+\beta \in \Phi$, then $\alpha+\beta\in\Phi^{++}$.\\
Let $\alpha,\beta\in \Phi^+$. Let $i,j\in \{-,+\}$. Assume that one has $i\alpha+j\beta\in\Phi$ and $i\alpha,j\beta\in \Phi^{++}$. Case by case it can be shown that $i\alpha+j\beta\in\Phi^{++}$.
\end{proof}
\begin{ste}\label{representanten}
 Let $y\in\A_a$.\\
 Define $\Pi := \{ \Psi \subset \Phi \mid \Psi \text{ is a system of positive roots of } \Phi \}$. Define for $\Psi\in\Pi$ the group $U^\Psi$ as the group generated by $U_\alpha$ for $\alpha\in \Psi$. Then
 $$B(G) = \bigcup_{\Phi^+\in \Pi} \{ ux : x\in \A_a, u \in U^{-\Phi^+}_y \mid \forall_{\alpha\in \Phi^+}\;\alpha(x)\geq \alpha(y) \}.$$
\end{ste}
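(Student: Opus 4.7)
The plan is to reduce the statement to a standard Iwasawa-type decomposition of $P_y$. Let $z \in B(G)$ be arbitrary; the goal is to find $\Phi^+ \in \Pi$, a point $x \in \A_a$ with $\alpha(x) \geq \alpha(y)$ for every $\alpha \in \Phi^+$, and $u \in U^{-\Phi^+}_y$ so that $z = ux$. First I would invoke two standard Bruhat-Tits axioms for the building: any two points lie in a common apartment, and the pointwise stabilizer $P_y$ acts transitively on apartments containing $y$. These give an apartment $\A'$ containing $y$ and $z$, together with an element $g \in P_y$ with $g\A_a = \A'$. Setting $x_0 := g^{-1}z \in \A_a$, we have $z = gx_0$. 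Since the closed dominant Weyl chambers based at $y$ tile $\A_a$ as $\Phi^+$ varies, I would then choose $\Phi^+$ so that $\alpha(x_0) \geq \alpha(y)$ for all $\alpha \in \Phi^+$.

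Next I would apply the Iwahori-type factorization $P_y = (U^{-\Phi^+} \cap P_y)(U^{\Phi^+} \cap P_y)(N_G(S) \cap P_y)$ to write $g = u_- u_+ n$. The key observation is that the dominance condition $\alpha(x_0) \geq \alpha(y)$ for $\alpha \in \Phi^+$ forces every element of $U^{\Phi^+} \cap P_y$ to fix $x_0$: for $\alpha \in \Phi^+$ one has $-\alpha(y) \geq -\alpha(x_0)$, so $U_{\alpha,-\alpha(y)} \subseteq U_{\alpha,-\alpha(x_0)}$. In particular $u_+ x_0 = x_0$, and therefore $z = u_-(nx_0)$ with $nx_0 \in \A_a$.

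The remaining subtlety is that $n \in N_G(S) \cap P_y$ acts on $\A_a$ by an affine transformation fixing $y$ with linear part some $\bar{n} \in W$, so $nx_0$ may be moved out of the $\Phi^+$-dominant chamber based at $y$. To handle this, I would choose $g$ within its $(N_G(S) \cap P_y)$-coset from the very start so that $g^{-1}z$ already lies in the closed dominant chamber for an appropriate $\Phi^+$; equivalently, the Weyl-part $n$ can be absorbed by simultaneously updating $\Phi^+$ to $\bar{n}\Phi^+$. Either way one arrives at $z = u_- x$ with $u_- \in U^{-\Phi^+}_y$ and $x \in \A_a$ satisfying the required dominance. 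I expect the main obstacle to be verifying that the Iwahori factorization of $P_y$ behaves well under this absorption step — specifically, that after replacing $\Phi^+$ by $\bar{n}\Phi^+$ the component $u_-$ indeed lies in $U^{-\bar{n}\Phi^+}_y$, which requires carefully tracking the conjugation of root subgroups by $n$. Given the structure theory of Bruhat-Tits buildings this is essentially bookkeeping, but it is the step that demands the most care.
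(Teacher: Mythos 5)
Your overall strategy -- find a common apartment, use transitivity of a stabilizer, factor the resulting group element via an Iwahori-type decomposition, and use the dominance condition to kill the positive part -- is the same skeleton as the paper's proof, which uses the retraction $\rho$ centered at a chamber $C$ with $y\in\overline{C}$, the previous Lemma to pick $\Phi^{++}$, a chamber $D$ near $\rho(z)$, and the containment $P_C\subset U_C^{++}P_D$. But your choice of stabilizer creates a gap that the paper's choice is specifically designed to avoid.

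Concretely: you factor $g\in P_y$ as $g = u_-u_+n$ with $n\in N_G(S)\cap P_y = N_y$, and the whole difficulty lands on $n$. You correctly identify this, but your two proposed fixes do not close it. The ``update $\Phi^+$ to $\bar{n}\Phi^+$'' move is circular, because the Iwahori factorization of $g$ depends on $\Phi^+$: if you change $\Phi^+$, the pieces $u_-,u_+,n$ all change, so you cannot hold $n$ fixed while rotating the positive system. The ``normalize $g$ within its $N_y$-coset from the start'' move has the same problem, since you cannot choose $\Phi^+$ until you know $x_0=g^{-1}z$, and you cannot decide which coset representative is good until you have fixed $\Phi^+$. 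Moreover, even after writing $z=u_-x_1$ with $x_1 = nx_0$, the statement requires $x_1$ to satisfy the dominance inequalities, and there is no reason it should, since $\bar{n}$ can move $x_0$ out of the dominant cone based at $y$. Note also that the order matters: from $g=u_-u_+n$ your dominance argument shows $u_+$ fixes $x_0$, not $nx_0$, so $z=u_-u_+(nx_0)$ does not simplify as written.

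The paper sidesteps all of this by replacing $P_y$ with $P_C$ for a chamber $C$ containing $y$ in its closure (so $P_C\subset P_y$ and $U^{++}_C\subset U^{\Phi^{++}}_y$). For a chamber, $N_C$ consists of elements of $N_G(S)$ that fix the open set $C$ pointwise, hence fix \emph{all} of $\A_a$ pointwise. Consequently $N_C=N_D$ for any other chamber $D$, and the factorization $P_C=U_C^{++}U_C^{--}N_C\subset U_C^{++}P_D$ absorbs the entire $N$-part into $P_D$, which then fixes $\rho(z)\in\overline{D}$. There is nothing analogous for $N_y$, which genuinely permutes the chambers around $y$. To repair your argument you would essentially have to replace $P_y$ by $P_C$ and introduce the retraction centered at $C$ -- at which point you have the paper's proof.
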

\begin{proof}
(See \cite[\S 4.1]{MS12}) Let $x\in B(G)$, choose a retraction $\rho$ to $\A_a$ centered in $C$. Take $\Phi^{++}$ a set of positive roots such that $-\alpha(\rho(x))\leq -\alpha(y)$ and $\lceil f_{\rho(x)}(\alpha))\rceil_\alpha \leq \lceil f_C(\alpha)\rceil_\alpha$ for $\alpha \in -\Phi^{++}$. Let $D$ be a chamber in $\A_a$ whose closure contains $\rho(x)$ and for $\alpha\in\Phi^{--}$ one has $\lceil f_D(\alpha)\rceil_\alpha=\lceil\alpha(x)\rceil_\alpha$. Now $\lceil f_C(\alpha)\rceil_\alpha\geq \lceil f_{\rho(x)}(\alpha)\rceil_\alpha=\lceil f_D(\alpha)\rceil_\alpha$ for $\alpha \in \Phi^{--}$. Therefore $U^{--}_C \subset U^{--}_D$. Since $N_C=N_D$, one has $P_C\subset U_C^{++}P_D$. Because $P_C$ acts transitively on the sets of apartments containing $C$ there exists a $u\in U^{++}_C$ such that $x=u\rho(x)$.
\end{proof}
(Notice that with the same proof Theorem \ref{representanten} holds with $\A_a$ substituted by $\A_e$.)\\

Now we have all the ingredients to prove Theorem \ref{Lgrens}.\\

{\it Proof of Theorem \ref{Lgrens}\\}
Let $x\in \A_a$ be a vertex and let $z$ be a vertex above $x$ fixed by $\gamma$.\\
According to Theorem \ref{representanten} there is a positive root system $\Phi^+$ and $u\in U^-$ such that $z=uy$ with $y\in \A_a$ and $\alpha(y)\geq \alpha(x)$ for $\alpha\in\Phi^+$. Take $\Delta = \{\alpha_1,\ldots,\alpha_n\}$ to be the set of simple roots of $\Phi^+$. Define for each root $\alpha_i$ a vertex $a_i$ in $\A_a$ in the following way. Let $\mathcal{C} := \{ y\in\A_a : \alpha(y)>0 \text{ for all } \alpha\in\Phi^+\}$. Hence $y = x + \sum_{i=1}^n n_ic_ia_i$ with $n_i\in \R_{\geq0}$. Since $\gamma$ fixes $uy$ and $uy$ is above $x$, according to Lemma \ref{grenzen} $n_i< ht(\tilde{\Phi})sd(\gamma)+1$ for all $i\in \{1,\ldots,n\}$. Since $\dim \A_a =n$, there is a $c\in \R$ such that for all $\gamma \in T\cap P_\oo$ the number of vertices in $y\in\A_a\cap (x+\mathcal{C})$ with $\alpha_i(y-x)<  ht(\tilde{\Phi})sd(\gamma)+1$ is bounded by  $c(ht(\tilde{\Phi})sd(\gamma)+1)^n$.\\
By Theorem \ref{ugrens} 
$$\#\{uy : u\in U^-\cap P_x \mid \gamma uy=uy\} \leq |D(\gamma)|^{-\frac12}.$$ Therefore there is a $c\in \R$ such that for all $\gamma\in T\cap P_\oo$ and all vertices $x\in \A_a$ the number of vertices fixed by $\gamma$ and above $x$ is bounded by $c (ht(\tilde{\Phi})sd(\gamma)+1)^n|D(\gamma)|^{-\frac12}$.\hfill $\square$\\

Define the fundamental domain $F_a$ for the action of $S$ on $\A_a$ as follows:
$$F_a := \{x\in\A_a \mid \forall s\in S[d(x,\oo)\leq d(x,s\oo)] \}.$$
For $\gamma \in Z_G(S)$ and $w\in B_a$ let
$$ L_{a,\gamma} := \{ x\in Gw \mid x \text{ is above a vertex in } F_a\text{ and } \gamma x=x\}.$$
\begin{gev}\label{LLgrens}
 There is a $c\in \R$ such that for all semi-simple regular $\gamma \in Z_G(S)\cap P_w$:
 $$|L_{a,\gamma}|\leq c (ht(\tilde{\Phi})sd(\gamma)+1)^n|D(\gamma)|^{-\frac12} $$
\end{gev}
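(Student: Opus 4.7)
The plan is to partition $L_{a,\gamma}$ according to which vertex of $F_a$ each element sits above, and to bound each piece by invoking Theorem \ref{Lgrens}. Two preliminary observations drive the argument: that $\gamma$ satisfies the hypotheses of Theorem \ref{Lgrens}, and that $F_a$ contains only finitely many vertices.

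For the first, since $\gamma \in Z_G(S)$ is regular semi-simple, the maximal torus $T := Z_G(\gamma)$ contains $S$, so $\gamma \in T$. Moreover $Z_G(S)$ acts on $\A_a$ by translations via the map $v$, and since $\gamma$ additionally fixes $w \in B_a$, its translation vector in $\A_a$ must vanish; hence $\gamma$ fixes $\A_a$ pointwise. In particular $\gamma \in P_v$ for every vertex $v \in \A_a$, so Theorem \ref{Lgrens} applies relative to any such vertex, not merely $\oo$. For the second, because $\A_a = (X_*(S)/X_*(S_\Delta)) \otimes_\Z \R$, the image $v(S)$ is a cocompact lattice in $\A_a$, so the Voronoi-type fundamental domain $F_a$ is bounded and contains only finitely many vertices. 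Let $N := |F_a \cap \A_a^0|$, where $\A_a^0$ denotes the set of vertices of $\A_a$.

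With these in hand, the count is immediate. Every $x \in L_{a,\gamma}$ lies above some $v \in F_a \cap \A_a^0$ and is itself a vertex of $B_a$ (assuming $w$ is a vertex, so that $Gw$ consists of vertices of $B_a$). Hence
\[
|L_{a,\gamma}| \le \sum_{v \in F_a \cap \A_a^0} |\{ x \in B_a \mid x \text{ is a vertex above } v,\ \gamma x = x\}| \le Nc\,(ht(\tilde{\Phi})sd(\gamma)+1)^n |D(\gamma)|^{-\frac12}
\]
by Theorem \ref{Lgrens}, which is the claimed inequality (absorbing $N$ into the constant). The main conceptual step will be the translation-trivial observation verifying the hypothesis of Theorem \ref{Lgrens}; once it is in place, the rest is bookkeeping, since Theorem \ref{Lgrens} already does all the analytic work and the finiteness of $F_a \cap \A_a^0$ is a routine consequence of $v(S)$ being a full-rank lattice in $\A_a$.
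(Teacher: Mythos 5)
Your proposal is correct and follows exactly the paper's own (one-line) argument: the paper sets the constant to be the number of vertices of $F_a$ times the constant from Theorem~\ref{Lgrens}. Your additional verifications — that $\gamma\in Z_G(S)\cap P_w$ translates $\A_a$ trivially and hence lies in $T\cap P_\oo$, and that $F_a$ has finitely many vertices since $S$ acts cocompactly on $\A_a$ — are the implicit hypotheses the paper is invoking.
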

\begin{proof}
Let $N\in\N$ be the number of vertices in $F_a$ and $C$ be the $C$ of Theorem \ref{Lgrens}. Then $c:= NC$ will do.
\end{proof}

\subsection{An upper-bound for the Weyl integral}

\begin{ste}\label{utwi}
 Let $h\in G$ and $x\in B(h)$. Then there is a $C\in \R$ such that for all regular semi-simple $\gamma\in\; ^GZ_G(S)$
 \[\int_{Z_G(\gamma)\backslash G} 1_{hP_{[x,hx]}}(g^{-1}\gamma g)dg \leq C(ht(\tilde{\Phi})sd(\gamma)+1)^{n} |D(\gamma)|^{-\frac12}.\]
\end{ste}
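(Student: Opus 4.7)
The plan is to reduce the estimate to the case when $\gamma$ is compact, where Lemma \ref{intnaarly} and Corollary \ref{LLgrens} apply directly, and then to pass from the compact case to the general regular semi-simple case via the Levi subgroup $M_\gamma$, mirroring the strategy of Proposition \ref{afschattingalgemeen}.

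First I would handle the compact case. If $\gamma$ is compact then $d(\gamma)=0$; for the integrand to be nonzero we must also have $d(h)=0$, in which case $h$ fixes $x$, the set $hP_{[x,hx]}$ equals $P_x$, and the condition $g^{-1}\gamma g \in hP_{[x,hx]}$ becomes $\gamma\cdot gx = gx$. The proof of Lemma \ref{intnaarly} uses only that the relevant set stabilizes $x$, so it goes through verbatim and bounds the integral by the number of $(T\cap P_x)$-orbits of points in $Gx\cap L$ that are fixed by $\gamma$. Pushing these fixed points down to the reduced building via $\phi_{M_\gamma}$ (whose fibres above a chosen lift are of bounded size by Lemma \ref{y=y'}) and invoking Corollary \ref{LLgrens} yields the required bound $C(ht(\tilde{\Phi})sd(\gamma)+1)^n|D(\gamma)|^{-\frac12}$.

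For a general regular semi-simple $\gamma$, any contributing $g$ satisfies $g^{-1}\gamma g \in hP_{[x,hx]}$; by (\ref{diseq}) this forces $d(g^{-1}\gamma g) = d(h) = d(\gamma)$, so $gx \in B(\gamma) \subset B_e(M_\gamma)$, and by Lemma \ref{conjugatievanleviofg} the Levi $g^{-1}M_\gamma g = M_{g^{-1}\gamma g}$ is $P_{[x,hx]}$-conjugate to $M_h$. Using this conjugacy I would rewrite the orbital integral on $G$ as an orbital integral inside $M_\gamma$, in which $\gamma$ is compact modulo the centre and $x$ is a $\gamma$-fixed point of $B_e(M_\gamma)$. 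The compact case, now applied in $M_\gamma$, produces a bound involving $|D_{M_\gamma}(\gamma)|^{-\frac12}$, $sd_{M_\gamma}(\gamma)$ and $ht(\tilde{\Phi}_{M_\gamma})$.

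Converting back to $G$-invariants is routine: the inequalities $ht(\tilde{\Phi}_{M_\gamma}) \leq ht(\tilde{\Phi})$ and $sd_{M_\gamma}(\gamma) \leq sd(\gamma)$ are immediate, while $|D(\gamma)|/|D_{M_\gamma}(\gamma)|$ is uniformly bounded on the compact set $hP_{[x,hx]}\cap M_\gamma$ by the continuous-extension argument of Lemma \ref{D/Dm}. The hard step is the reduction to $M_\gamma$: because $P_{[x,hx]}$ does not in general normalize $hP_{[x,hx]}$, the integrand is not constant on $T$-$P_{[x,hx]}$ double cosets, so one must partition $T\backslash G$ according to the $P_{[x,hx]}$-orbit structure on $\{g : g^{-1}M_\gamma g \text{ is } P_{[x,hx]}\text{-conjugate to } M_h\}$ and control the resulting fibre measures, in the style of Lemma \ref{bermaat}.
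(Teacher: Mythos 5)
Your compact case is sound, but for a general regular semi-simple $\gamma$ the proposal has a genuine gap. You propose to rewrite the orbital integral over $T\backslash G$ as an orbital integral inside $M_\gamma$, but this is not carried out and is not straightforward: the support $\{Tg : g^{-1}\gamma g\in hP_{[x,hx]}\}$ is not contained in $T\backslash TM_\gamma$; only the points $gx$ land in $B_e(M_\gamma)$, and these can be reached by $g$ far outside $M_\gamma$. The obstacle you identify (non-constancy of the integrand on $T$--$P_{[x,hx]}$ double cosets) is also not the real difficulty: the paper replaces $1_{hP_{[x,hx]}}$ by the larger indicator $1_{\gamma P_x}$ (valid since $hP_{[x,hx]}=\gamma P_{[x,hx]}\subset\gamma P_x$ once $\gamma$ is conjugated into $hP_{[x,hx]}$), and $g\mapsto 1_{\gamma P_x}(g^{-1}\gamma g)$ depends only on whether $\gamma g x=g\gamma x$, which \emph{is} constant on the double cosets $T\backslash G/P_{[x,\gamma x]}$.

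The paper's route avoids any reduction of the integral itself to $M_\gamma$. Lemma \ref{intnaarly} is applied to arbitrary (not only compact) $\gamma$ with $d(\gamma)=d(h)$, producing the bound $|L_\gamma|$, and $L_\gamma$ is already defined via $\phi_{M_\gamma}^{-1}(B_a(M_\gamma)^\gamma)$, so the passage to $M_\gamma$ is built into the point count. The Levi reduction then takes place at that combinatorial level: Lemma \ref{GOnMO} (finitely many $M_\gamma$-orbits in $Gx\cap B_e(M_\gamma)$) together with Lemma \ref{LnLm} (which bounds $|L_\gamma|$ by $c_0|L_{x,\gamma}(M_\gamma)|$, using Lemma \ref{y=y'} to control the $\phi_{M_\gamma}$-fibres over above/below vertices). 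Lemma \ref{LnLm} is the heart of the argument and is absent from your proposal. After it, Corollary \ref{LLgrens} applies in $M_\gamma$, and the $M_\gamma$-to-$G$ conversion via $sd_{M_\gamma}\leq sd$, $ht(\tilde{\Phi}_{M_\gamma})\leq ht(\tilde{\Phi})$, Lemma \ref{D/Dm}, and finiteness of Levi subgroups containing $S$ is exactly as you describe in your final paragraph.
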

\begin{proof}
By conjugating $h$ with a suitable element of $G$, $x$ can be moved to $\A_e$.
Both sides are invariant under conjugation with $G$. So without loss of generality we assume that $\gamma \in Z_G(S)$. Define $T := Z_G(\gamma)$. If the integral is non-zero, there is a $g\in G$ such that $g^{-1}\gamma g \in hP_{[x,hx]}$. Then $d(\gamma) = d(g^{-1}\gamma g)=d(h)$ by equation \ref{diseq}. Thus without loss of generality we assume that $d(\gamma)=d(h)$.\\

Since $\gamma\in Z_G(S)$ and $x\in \A_e(S)$, $x\in B(\gamma)$. Thus by Lemma \ref{intnaarly} 
$$\int_{T\backslash G} 1_{\gamma P_x}(g^{-1}\gamma g)dg \leq |L_\gamma|.$$
So it is enough to show that $|L_\gamma|\leq C(ht(\tilde{\Phi})sd(\gamma)+1)^{n} |D(\gamma)|^{-\frac12}$.\\

Let $M$ be a Levi subgroup, such that $Z_G(S)\subset M$. Define 
$$Z_G(S)_M := \{ \gamma \in Z_G(S) \mid d(\gamma)=d(h), \gamma \text{ is regular semi-simple and } M_\gamma =M\}.$$
We will give an upper-bound for $|L_\gamma|$ for all $\gamma \in Z_G(S)_M$.

\begin{lem}\label{GOnMO}
 Let $x\in B_e(G)$ and let $M$ a Levi subgroup. Then $Gx \cap B_e(M)$ consists of finitely many $M$-orbits.
\end{lem}
\begin{proof}
If the Lemma holds for $M$ it also holds for $gMg^{-1}$. Thus without loss of generality we assume that $S\subset M$. If $gx \in B_e(M)$, there is an $m\in M$ such that $mgx\in \A_e$. Thus every $M$-orbit may and will be represented by a point in $\A_e$. Let $F_a$ be the fundamental domain of $S$ in $\A_e$. Then every $M$-orbit has at least one point in $F_a$. Since $F_a$ is bounded and there is an $r\in \R$ such that $d(z,z')\geq r$ for distinct $z,z'\in Gx$, there are only finitely many points of $Gx$ in $F_a$. So the number of $M$-orbits in $G\cap B_e(M)$ is finite.
\end{proof}

Recall the canonical map $\phi_M : B_e(M)\rightarrow B_a(M)$. Define
$$L_{x,\gamma}(M) := \{ y\in \phi_M(Gx\cap B_e(M)) \mid y\text{ is above a vertex of } F_a \text{ and } \gamma y = y\}.$$
By Corollary \ref{LLgrens} and Lemma \ref{GOnMO} there is a $c\in \R$ such that for all $\gamma \in Z_G(S)_M$:
$$|L_{x,\gamma}(M)|\leq c (ht(\tilde{\Phi}_{M})sd_{M}(\gamma)+1)^n|D_{M}(\gamma)|^{-\frac12}.$$ 
Let $Y_M := B_e(Z(M))=\A_e(Z(M))$.\\
Then $\A_e(M) = \A_a(M)\oplus Y_M$.\\
Define $\pi_M : B_e(M)\rightarrow Y_M$ by $(g,x+ y) \mapsto (g,y)$, for $x\in \A_a(M)$ and $y\in Y_M$.
Define $D := \pi_M(F_a)$.

\begin{lem}\label{LnLm}
There is a $c_0$ only depending on $M$ such that $|L_\gamma| \leq c_0|L_{x,\gamma}(M)|$.
\end{lem}
\begin{proof}
For $z\in L_{x,\gamma}(M)$ define
$$F(z) := \phi_{M}^{-1}(z)\cap L_\gamma.$$
Let $z'\in \A_a(M)$, $a\in \A_a(M)$ and $u\in U_a$, such that $z= uz'$ and $z'$ is above $a$. 
Let $v \in \phi^{-1}_{M}(z)\cap L_\gamma$. Then there is a $y\in Y_M$ such that $v= u(z'+ y)$. Let $a+ y'\in F_a$ such that $u (z+ y)$ is above $a+ y'$. By Lemma \ref{y=y'}, then $y=y'$. Thus if $v\in \phi^{-1}_{M}(z) \cap L_\gamma$, then $u^{-1}v \in (z'+ D) \cap Gx$.\\
Because there exists an $r\in\R_{>0}$ such that $d(z,z')>r$ for all distinct $z,z'\in Gx$, there exists a $N\in\N$ such that $|(z'+ D) \cap Gx|\leq N$ for all $z'\in \A_a(M)$.\\
Thus $|F(z)|\leq N$ and the Lemma follows.
\end{proof}

{\it Continuation of the proof of Theorem \ref{utwi}.} By Lemma \ref{LnLm} and Corollary \ref{LLgrens} for all Levi subgroups $M$ containing $S$, there is a $C_M\in \R_{>0}$ such that for all $\gamma\in Z_G(S)$ with $M_\gamma =M$: \[|L_\gamma|\leq C_M(ht(\tilde{\Phi}_{M_\gamma})sd_{M_\gamma}(\gamma)+1)^n|D_{M_\gamma}(\gamma)|^{-\frac12}.\]
By Lemma \ref{D/Dm} and the fact that there are only finitely many Levi subgroups containing $S$ there is a $C\in \R$ such that for all $\gamma\in Z_G(S)$ with $d(\gamma)=d(h)$: 
\[|L_\gamma|\leq C(ht(\tilde{\Phi})sd(\gamma)+1)^n|D(\gamma)|^{-\frac12}.\qedhere\]
\end{proof}
\begin{pro}\label{Iybound}
 Let $f\in C_c^\infty(G)$ and let $\omega\subset G$ be a compact subset of $G$. Then there exists a $C\in \R$ such that for all $\gamma\in Z_G(S)\cap \omega$ 
 $$|\int_{Z_G(\gamma)\backslash G} f(g^{-1}\gamma g) dg| \leq C(ht(\tilde{\Phi})sd(\gamma)+1)^{\dim \A_a}|D(\gamma)|^{-\frac12}.$$
\end{pro}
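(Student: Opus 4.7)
The plan is to reduce to the case already handled in Theorem \ref{utwi}, namely $f = 1_{hP_{[x,hx]}}$ for some $h \in G$ and $x \in B(h)$, and then combine finitely many such estimates using the compactness of $\mathrm{supp}(f)$.

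First I would observe that for every $h \in G$ the set $B(h)$ is non-empty, so we may pick some $x_h \in B(h)$. Since $P_{[x_h,hx_h]}$ is the pointwise stabiliser in $G$ of a compact subcomplex of $B_e$, it is a compact open subgroup, and $hP_{[x_h,hx_h]}$ is thus an open neighbourhood of $h$. Because $f \in C_c^\infty(G)$ is locally constant, for each $h \in \mathrm{supp}(f)$ we may shrink this neighbourhood to a coset $hK_h$, with $K_h$ a compact open subgroup contained in $P_{[x_h,hx_h]}$, on which $f$ is constantly equal to $f(h)$.

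By compactness of $\mathrm{supp}(f)$ I extract a finite subcover $\{h_j K_{h_j}\}_{j=1}^N$. Writing $x_j := x_{h_j}$, the pointwise inequality
\[ |f| \;\leq\; \sum_{j=1}^{N} |f(h_j)|\, 1_{h_j P_{[x_j, h_j x_j]}} \]
holds on $G$: every point of $\mathrm{supp}(f)$ lies in some $h_j K_{h_j} \subseteq h_j P_{[x_j, h_j x_j]}$ and $f$ is constant equal to $f(h_j)$ on that coset (overlaps only help the majorant). Pulling this inequality back along $g \mapsto g^{-1}\gamma g$, integrating over $Z_G(\gamma)\backslash G$, and applying Theorem \ref{utwi} to each of the $N$ summands yields constants $C_j \in \R_{>0}$ such that
\[ \Bigl| \int_{Z_G(\gamma)\backslash G} f(g^{-1}\gamma g)\, dg \Bigr| \;\leq\; \Bigl(\sum_{j=1}^{N} |f(h_j)|\, C_j \Bigr) (ht(\tilde{\Phi})\,sd(\gamma)+1)^{\dim \A_a}\,|D(\gamma)|^{-\frac12}, \]
which is the claimed inequality with $C := \sum_j |f(h_j)|\, C_j$.

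I do not foresee a serious obstacle. The compact set $\omega$ plays no essential role in the argument: the estimate produced by this method is in fact uniform over all regular semi-simple $\gamma \in Z_G(S)$, and the hypothesis $\gamma \in \omega$ is only relevant for the later combination with the character estimate of Proposition \ref{afschattingalgemeen} via the integration formula. The only mild point requiring care is writing $|f|$ as a majorant-sum rather than as an identity of locally constant functions; this is handled painlessly by the pointwise cover argument, which tolerates multiplicity and thereby avoids any bookkeeping for overlaps among the $h_j K_{h_j}$.
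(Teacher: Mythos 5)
Your argument is correct and essentially identical to the paper's proof: both cover the support of $f$ by finitely many cosets $g_iP_{[x_i,g_ix_i]}$ with $x_i\in B(g_i)$, apply Theorem~\ref{utwi} to each piece, and sum. Your explicit pointwise majorant $|f|\le\sum_j|f(h_j)|\,1_{h_jP_{[x_j,h_jx_j]}}$ is a slightly more careful rendering of the paper's brief appeal to $C_c^\infty(G)$ being spanned by indicator functions of compact sets, and your observation that the compactness of $\omega$ is not actually used in the bound is also consistent with the paper's argument.
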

\begin{proof}
Let $\Omega \subset G$ be a compact subset. Let $T := Z_G(\gamma)$. Then there are $g_1,\ldots, g_m\in G$ and $x_1,\ldots,x_m\in B_e$ such that $d_{g_i}(x_i)=d(g_i)$ and $\Omega \subset \bigcup_{i=1}^m g_iP_{[x_i,g_ix_i]}$. Therefore
$$ \int_{T\backslash G} 1_\Omega(g^{-1}\gamma g)dg \leq \sum_{i=1}^m \int_{T\backslash G} 1_{g_iP_{[x_i,g_ix_i]}}(g^{-1}\gamma g)dg.$$
So it is enough to give an estimate for $\int_{T\backslash G} 1_{g_iP_{[x_i,g_ix_i]}}(g^{-1}\gamma g)dg$.\\
Take $h_i\in G$ such that $x_i\in h_i\A_e(S)$. Now apply Theorem \ref{utwi} to $x\in \A_e(hSh^{-1})$ and $\gamma \in Z_G(S)\subset \;^GZ_G(hSh^{-1})$.\\ Since $C_c^\infty(G)$ is spanned as $\mathbb{C}$-vector space by the $1_\Omega$ with $\Omega$ a compact subset of $G$, the proposition follows. 
\end{proof}

\section{Local summability of the character on $^G{T}$ ($S\subset T$)}
In this section we combine the upper-bounds for the Weyl integration formula and for the character of the representation to show that the character is locally summable on $^GT$ for a maximal torus $T$ containing a maximal split torus $S$. It turns out that it is enough to show that $sd^k$ is locally summable on $T$. Inspired by Harish-Chandra \cite[Lemma 43]{HC70} we show that even $sd^k|D|^{-\epsilon}$ is locally summable on every maximal $\F$-torus $T$ of $G$ for some $\epsilon>0$ depending on $T$.  
\subsection{Local summability of $sd^k|D|^{-\epsilon}$ on $T$}
In the first part of this subsection $T$ is an arbitrary $\F$-torus (not necessarily contained in $G$).\\
Integrating a function in a small neighborhood of the identity in a $\F$-split torus can be translated to integrating a function in a small neighborhood of $0$ in a $\F$-vector space. Just apply the map $e: \co \rightarrow \co^\times$, $e(a) := 1+\pi a$. If $\chi\in X^*(T)$, then integrating the function $|\chi(t)-1|^{-\epsilon}$ in a small neighborhood of $id$, becomes integrating  $|(1+\pi x)^n-1|^{-\epsilon}$ over a small neighborhood of $0$. To study the integral $|(1+\pi x)^n-1|^{-\epsilon}$ over $\co$, we want to have an estimate for the measure of \[\co_r := \{ x\in \co \mid v((1+\pi x)^n-1)\geq r\}\] in $\co$. For this we study first \[\co_r(f) := \{ x\in \co \mid v(f(x))\geq r\}\] for a polynomial $f\in \co[x]$, with $f\not=0$. In the case that $T$ is not an $\F$-split torus there is in general no polynomial bijection between $\co^m$ and a neighborhood of the identity. However, we are able to construct a surjective map from $\co_\E^n$ to $\cst$ for some Galois extension $\E$ and compact subgroup $\cst$ of $T$, using a generalised norm map $N_{\E/\F} : T(\E)\rightarrow T(\F)$. This gives rise to the study of the measure of \[\co^{n}_r(f) := \{ x\in \co^n \mid v(f(x))\geq r\}\] in $\co^n$ for a polynomial $f\in\co_\E[x_1,\ldots,x_n]$, with $f\not=0$.\\

For $f\in \F[x_1,\ldots, x_n]$, write $f = \sum_{a\in\N^n}c(a)\prod_{i=1}^n x_i^{a_i}$.\\
Define $m_i := \max \left\{ l \in \N \mid \exists a\in\N^n[a_i = l \text{ and } c(a)\not=0]\right\}$.\\
Define $m_f := \max_{i} m_i$.\\
Thus $m_f$ is the highest number that occurs as a power of any $x_i$ in the expression of $f$.\\
Thus for $f(x_1,x_2) := x_1x_2^3+x_1x_2+2$ we have $m_1=1$, $m_2=3$ and $m_f=3$.
\begin{lem}\label{epimv}
 Let $\E/\F$ be a finite field extension.\\
 Let $f \in \co_\E[x_1,\ldots,x_n]$ and $f\not=0$. There exists a $C\in\R_{>0}$ such that for all $r\in \Q$ and $N\in\N$ with $N\geq r$:
 $$\frac{1}{q^{nN}}|\{ x\in (\co_\F/\pi^N\co_\F)^n\mid v(f(x))\geq r\}| \leq CN^{n-1}q^{-\frac{r}{m_f}}.$$
\end{lem}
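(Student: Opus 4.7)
The plan is to prove the lemma by induction on the number of variables $n$. The normalizing factor $1/q^{nN}$ makes the left-hand side the $\co_\F/\pi^N$-density of the ``bad set,'' the exponent $-r/m_f$ should come from a Krasner / Newton-polygon argument in one variable, and the polylogarithmic factor $N^{n-1}$ will arise from iterating the one-variable estimate.

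For the base case $n=1$, I would factor $f$ over $\bar{\E}$ as $f(x)=c\prod_{i=1}^{m_f}(x-\alpha_i)$ with $c$ the leading coefficient (of valuation $s_c\geq 0$). The condition $v(f(x))\geq r$ forces $v(x-\alpha_i)\geq (r-s_c)/m_f$ for at least one $i$. Each set $\{x\in\co_\F : v(x-\alpha_i)\geq \rho\}$ is either empty or an ultrametric ball containing at most $q^{N-\lceil\rho\rceil}$ cosets modulo $\pi^N$; the hypothesis $N\geq r$ together with $s_c\geq 0$ guarantees $\rho=(r-s_c)/m_f\leq N$. Summing over the $m_f$ roots yields the claimed bound with $N^{n-1}=1$ and $C=m_f q^{s_c/m_f}$.

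For the inductive step I would reorder the variables so that $\deg_{x_n} f = m_f$ and write
$$f(\vec{x}',x_n)=\sum_{i=0}^{m_f} g_i(\vec{x}')\, x_n^i, \qquad g:=g_{m_f}\neq 0.$$
Since every $x_j$ has degree at most $m_f$ in $g$, one has $m_g\leq m_f$, so the inductive hypothesis applied to $g$ gives, for $0\leq s\leq N$,
$$\#\{\vec{x}'\in(\co_\F/\pi^N)^{n-1} : v(g(\vec{x}'))\geq s\} \leq C_g\, q^{(n-1)N}\, N^{n-2}\, q^{-s/m_f}.$$
For each $\vec{x}'$ in the slice $v(g(\vec{x}'))\in[s,s+1)$ with $s\leq r$, the univariate polynomial $h_{\vec{x}'}(x_n):=f(\vec{x}',x_n)$ has degree exactly $m_f$ and leading coefficient of valuation $s$, so the base case applied to $h_{\vec{x}'}$ yields $\#\{x_n : v(h_{\vec{x}'}(x_n))\geq r\}\leq C_0\, q^{N-(r-s)/m_f}$. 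For $\vec{x}'$ with $v(g(\vec{x}'))>r$ (including $g(\vec{x}')\equiv 0\bmod \pi^N$), I would use the trivial bound $q^N$. Summing yields
$$S_r\leq \sum_{s=0}^{\lceil r\rceil} (\text{inductive density at }s)\cdot C_0\, q^{N-(r-s)/m_f} \,+\, (\text{tail density})\cdot q^N.$$
The telescoping $q^{-s/m_f}\cdot q^{(s-r)/m_f}=q^{-r/m_f}$ makes each of the $O(N)$ terms in the first sum contribute $O(q^{nN}N^{n-2}q^{-r/m_f})$, while the tail is bounded by a convergent geometric series in $s$. Dividing by $q^{nN}$ yields the desired $O(N^{n-1}q^{-r/m_f})$ bound.

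The main obstacle is handling those $\vec{x}'$ for which $g(\vec{x}')$ has large valuation: there the polynomial $h_{\vec{x}'}$ might have ``effective'' degree less than $m_f$ (or be $\equiv 0\bmod\pi^N$), and the base case cannot be applied with the nominal degree. The induction on $g$ addresses this by showing such $\vec{x}'$ are few, so the trivial bound $q^N$ for each is absorbed by the smallness of their density. A minor complication, which I would handle by bookkeeping, is that $v$ on $\E$ takes values in $\tfrac{1}{e}\Z$ when $\E/\F$ is ramified, so the ``integer slices'' for $v(g(\vec{x}'))$ cost at most a constant factor after rounding.
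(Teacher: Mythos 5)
Your proposal is correct and follows essentially the same route as the paper: induction on the number of variables, with a base case obtained by factoring the one-variable polynomial over an algebraic closure and bounding the number of $x$ close to each root, and an inductive step that isolates the top coefficient $g$ in the last variable, stratifies by $s=v(g(\vec{x}'))$, applies the inductive hypothesis to control the density of each stratum, and uses the one-variable bound (with leading-coefficient valuation $s$) fiberwise, absorbing the large-$s$/tail fibers (including $g\equiv 0\bmod\pi^N$) via the trivial $q^N$ bound times the small inductive density. The bookkeeping you flag for ramified $\E/\F$ (valuations in $\tfrac{1}{e}\Z$) is handled in the paper exactly as you suggest, by summing over slices $s=i/e$, $0\le i<eN$.
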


\begin{proof}
Since $f\in\co_\E[x_1,\ldots,x_n]$, to ask for $x\in (\co/\pi^N\co)^n$, whether $v(f(x))\geq r$ makes sense if $N\geq r$.\\
We prove this lemma with induction on $n$.\\
Assume that $n=1$, so $f(x) := \sum_{i=1}^m a_ix^i$, with $a_m\not=0$. Take $\alpha_1,\ldots,\alpha_n$ in an algebraic closure of $\E$ such that $f(x) = a_m\prod_{i=1}^m (x-\alpha_i)$.\\
Assume that $v(f(x))\geq r$. Then for some $i$, $v(a_m)+mv(x-\alpha_i)\geq r$. Thus $v(x-\alpha_i)\geq \frac{r-v(a_m)}{m}$.\\
So the number of $x\in \co_\F/\pi^N\co_\F$ such that $v(f(x))\geq r$ is bounded by $mq^{N-\frac{r-v(a_m)}{m}}$. Hence

$$\frac{1}{q^{N}}|\{ x\in \co_F/\pi^N\co_F\mid v(f(x))\geq r\}| \leq mq^{-\frac{r-v(a_m)}{m}}.$$

Assuming that we know the Lemma for $n$, we will prove the Lemma for $n+1$.\\
Let $m := m_f$. Without loss of generality assume that $m=m_{n+1}$. Take $g_0,\ldots,g_m\in \co_\E[x_1,\ldots,x_n]$ such that $f = \sum_{i=0}^m g_ix_{n+1}^i$. Then $g_m\not=0$ and $m\geq m_{g_m}$. Now we apply the induction hypothesis on $g_m$. Take a $C\in\R_{>0}$ such that for all $r\in\Q$ and $N\in \N$ with $N\geq r$, 
$$\frac{1}{q^{nN}}|\{ x\in (\co_\F/\pi^N\co_\F)^n\mid v(g_m(x))\geq r\}| \leq CN^{n-1}q^{-\frac{r}{m_{g_m}}}.$$

Define the following sets
\begin{align*}
V_r &:= \{ x\in (\co/\pi^N\co)^n \mid v(g_m(x))= r\},\\
O_{r,s} &:= \{x\in (\co/\pi^N\co)^{n+1}\mid v(g_m(x_1,\ldots,x_n))= s \text{ and }  v(f(x))\geq r\}.
\end{align*}
Define, for $x_1,\ldots, x_n \in \co/\pi^N\co$ and $r\in\Q$, the set:
$$ U_{x_1,\ldots,x_n,r} := \{ x\in \co/\pi^N\co \mid v(f(x_1,\ldots,x_n,x))\geq r\}.$$
So
$$O_{r,s} = \{ x \in (\co/\pi^N\co)^{n+1} \mid (x_1,\ldots,x_n)\in V_{s}\text{ and } x_{n+1}\in U_{x_1,\ldots,x_n,r}\}.$$
By the proof of the lemma in the case $n=1$ we have 
$$|U_{x_1,\ldots,x_n,r}| \leq mq^{N-\frac{r-v(g_m(x_1,\ldots,x_n))}{m}}$$
whenever $v(g_m(x_1,\ldots,x_n))<N$.

Let $x_1,\ldots,x_n\in\co/\pi^N\co$, such that $v(g_m(x_1,\ldots,x_n))=s< N$. Then
$$\frac{1}{q^N}|U_{x_1,\ldots,x_n,r}|\leq mq^{-\frac{r-s}{m}}.$$
By the induction hypothesis on $g_m$ we have 
$$\frac{1}{q^{nN}}|V_s| \leq CN^{n-1}q^{-\frac{s}{m_{g_m}}}.$$
Thus
\begin{align*}
 \frac{1}{q^{(n+1)N}} |O_{r,s}| &= \frac{1}{q^{(n+1)N}} \sum_{x\in V_s} |U_{x,r}|\leq \frac{1}{q^{nN}} \sum_{x\in V_s} mq^{-\frac{r-s}{m}}\\
 &= \frac{1}{q^{nN}}|V_s| mq^{-\frac{r-s}{m}}\leq CN^{n-1}q^{-\frac{s}{m_{g_m}}}mq^{-\frac{r-s}{m}}\\
 &\leq mCN^{n-1}q^{-\frac{r}{m}}.
\end{align*}
Let $e$ be the ramification index of $\E/\F$. So
\begin{align*}
&\frac{1}{q^{(n+1)N}}|\{ x\in (\co_\F/\pi^N\co_\F)^{n+1}\mid v(f(x))\geq r\}|\\
&\leq \frac{1}{q^{(n+1)N}}|\{x\in (\co_\F/\pi^N\co_\F)^{n+1}\mid v(g_m(x_1,\ldots,x_n))\geq N\}|+\sum_{i=0}^{eN-1}\frac{1}{q^{(n+1)N}} |O_{r,\frac{i}{e}}|\\
&\leq \frac{1}{q^{nN}}|\{ x\in (\co_\F/\pi^N\co_\F)^{n}\mid v(g_m(x))\geq N \}|+\sum_{i=0}^{eN-1}\frac{1}{q^{(n+1)N}} |O_{r,\frac{i}{e}}|\\
&\leq CN^{n-1}q^{-\frac{N}{m_{g_m}}}+\sum_{i=0}^{eN-1}mCN^{n-1}q^{-\frac{r}{m}} \\
&\leq CN^{n-1}q^{-\frac{r}{m}}+eNmCN^{n-1}q^{-\frac{r}{m}}\\
&\leq 2emC N^nq^{-\frac{r}{m}}. \qedhere
\end{align*}
\end{proof}
Let $\E/\F$ be a finite Galois extension such that $T$ is $\E$-split. Define the function $N_{\E/\F}: T(\E)\rightarrow T(\F)$ as follows:
$$ N_{\E/\F}(t) := \prod_{\sigma\in Gal(\E/\F)} \sigma(t).$$
Since $T$ is Abelian, $N_{\E/\F}(t)$ is invariant under the Galois action. Hence the image of $N_{\E/\F}$ lies in $T(\F)$. The group $\text{Gal}(\E:\F)$ acts on $X^*(T)$ by
\[ (\sigma\cdot \chi)(t) := \sigma(\chi(\sigma^{-1}(t))).\]

Let $m = \dim T$ and $n= [\E:\F]$.\\

Let $\chi_1,\ldots,\chi_m$ be a basis for $X^*(T)$ and $X_1,\ldots,X_m$ the dual basis for $X_*(T)$. Parametrize $T(\E)$ by $(\E^\times)^n\rightarrow T(\E)$:
$$ a\mapsto \prod_{i=1}^m X_i(a).$$
Define $K := \{ \prod_{i=1}^n X_i(a_i) : a_i\in 1+\pi\co_\E\}$.\\

Take $\alpha\in\E$ such that $\co_\E = \co_\F[\alpha]$.\\
Define $\alpha_i := \alpha^{i-1}$ for $i=1,\ldots,n$. So
\[
\sum_{i=1}^n a_i\alpha_i \in \pi^k\co_\E \Leftrightarrow \forall i[ a_i\in\pi^k\co_\F].
\]
Write $\E$ as $\F$-vector space with basis $1,\alpha_2,\ldots,\alpha_n$. For $a\in \E^m$, we define, for $1\leq i\leq m$ and $1\leq j\leq n$, the elements $a_{ij}\in\F$ to be the coordinates of $a_i$ with respect to this basis. Thus
$$a_i = \sum_{j=1}^n a_{ij}\alpha_i.$$
Define $\mathsf{p} : (\F^{n}-0)^m\rightarrow T(\E)$ by:
$$\mathsf{p}(a) := \prod_{i=1}^m X_i(\sum_{j=1}^n a_{ij}\alpha_j).$$
\begin{lem}\label{polyuit}
 Let $\chi\in X^*(T)$. There exist $f,g\in \co_\E[x_{11}, \ldots, x_{mn}]$, such that 
 $$\chi\circ N_{\E/\F}\circ \mathsf{p} (a) = \frac{f(a)}{g(a)}.$$
 Moreover if $\mathsf{p}(a)\in K$, then $f(a),g(a)\in \co_\E^\times$.
\end{lem}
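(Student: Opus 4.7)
My strategy is simply to expand $\chi\circ N_{\E/\F}\circ \mathsf{p}$ using multiplicativity, read off an explicit product formula, and then split the factors by the sign of the exponent to produce $f$ and $g$. Both $N_{\E/\F}$ and $\chi$ are group homomorphisms (for $N_{\E/\F}$ this is immediate from $\sigma(xy)=\sigma(x)\sigma(y)$ and the commutativity of $T$), so, setting $b_i := \sum_{j=1}^n a_{ij}\alpha_j$, I would write
\[
\chi\bigl(N_{\E/\F}(\mathsf{p}(a))\bigr) \;=\; \prod_{i=1}^m\prod_{\sigma\in \mathrm{Gal}(\E/\F)}\chi\bigl(\sigma(X_i(b_i))\bigr).
\]
Then I would apply the identity $\sigma(X_i(b))=(\sigma\cdot X_i)(\sigma(b))$ coming from the Galois action on cocharacters, together with the tautological pairing $\chi(Y(c))=c^{\langle \chi,Y\rangle}$ for $Y\in X_*(T)$ and $c\in \E^\times$, to obtain
\[
\chi\bigl(N_{\E/\F}(\mathsf{p}(a))\bigr) \;=\; \prod_{i,\sigma}\Bigl(\sum_{j=1}^n a_{ij}\,\sigma(\alpha_j)\Bigr)^{c_{i,\sigma}}, \qquad c_{i,\sigma}:=\langle\chi,\sigma\cdot X_i\rangle\in\Z.
\]
Because $\sigma\in\mathrm{Gal}(\E/\F)$ stabilises $\co_\E$, each $\sigma(\alpha_j)\in\co_\E$, so every base is a linear form with coefficients in $\co_\E$.

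I would then define $f$ as the product of those factors with $c_{i,\sigma}\geq 0$ (raised to the corresponding non-negative power) and $g$ as the product of the factors with $c_{i,\sigma}<0$ raised to $-c_{i,\sigma}$. Both are elements of $\co_\E[x_{11},\ldots,x_{mn}]$, and by construction $\chi\circ N_{\E/\F}\circ \mathsf{p}(a)=f(a)/g(a)$.

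For the \emph{moreover} claim, suppose $\mathsf{p}(a)\in K$. Then each $b_i\in 1+\pi\co_\E$. Since $\pi\in\F$ is fixed by every $\sigma\in\mathrm{Gal}(\E/\F)$ and $\co_\E$ is Galois-stable, $\sigma(1+\pi\co_\E)=1+\pi\co_\E\subset\co_\E^\times$; hence every factor $\sum_j a_{ij}\sigma(\alpha_j)=\sigma(b_i)$ is a unit in $\co_\E$, and $f(a),g(a)$, being products of such units, lie in $\co_\E^\times$. The only point requiring any care is the bookkeeping of the Galois action on $X_*(T)$ to ensure that the exponent $c_{i,\sigma}$ is an honest integer compatible with the natural pairing $\langle\;,\;\rangle$; once that is set up the argument is entirely formal.
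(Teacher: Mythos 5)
Your proof is correct and follows essentially the same route as the paper: expand via multiplicativity, reduce to $\prod_\sigma \sigma(\cdot)^{(\text{integer exponent})}$ where the exponent comes from the pairing with the Galois-translated (co)character, observe that $\sigma$ acts by an $\co_\E$-coefficient linear form on the coordinates, split by sign of the exponent, and for the unit claim use $\sigma(1+\pi\co_\E)\subset 1+\pi\co_\E$. The only cosmetic difference is that you write the exponent as $\langle\chi,\sigma\cdot X_i\rangle$ while the paper writes $\langle\sigma^{-1}\cdot\chi,X_i\rangle$, which agree by Galois-equivariance of the pairing.
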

\begin{proof}
Let $a\in\E^\times$, then
\[\chi\circ N_{\E/\F}\circ X_i(a) =\prod_{\sigma\in \text{Gal}(\E/\F)} \chi(\sigma(X_i(a)))= \prod_{\sigma\in \text{Gal}(\E/\F)}\sigma(a)^{z_\sigma},\]
where $z_\sigma=\left<\sigma^{-1}\cdot \chi,X_i\right>$. An automorphism $\sigma\in\text{Gal}(\E/\F)$ is, with $\E$ viewed as $\F$-vectorspace with basis $1,\alpha_2,\ldots,\alpha_n$, a polynomial map:
$$g_\sigma(x_1,\ldots,x_n):= \sum_{i=1}^{n} \sigma(\alpha_i)x_i.$$
Then for $a = \sum_{i=1}^n a_i\alpha_i$ with $a_i\in \F$ we have
$$g_\sigma(a_1,\ldots,a_n)= \sigma(a).$$
Since $\alpha\in\co_\E$ also $\sigma(\alpha^i)\in \co_\E$ for $i\geq 0$. Therefore $g_\sigma\in\co_\E[x_1,\ldots,x_n]$.\\
Thus $$\chi\circ N_{\E/\F}\circ X_i(\sum_{j=1}^na_j\alpha_j)=\prod_{\sigma\in\text{Gal}(\E/\F)}g_\sigma(a_1,\ldots,a_n)^{z_\sigma}=\frac{f_i(a_1,\ldots,a_n)}{g_i(a_1,\ldots,a_n)},$$
where $f_i,g_i\in \co_\E[x_1,\ldots,x_n]$. The first part of the lemma follows.\\
If $a \in 1+\pi\co_\E$, then $\sigma(a)\in 1+\pi\co_\E$ for all $\sigma\in\text{Gal}(\E/\F)$. Thus if $\mathsf{p}(a)\in K$, then $f(a),g(a)\in\co_\E^\times$.
\end{proof}
Fix $\chi\in X^*(T)$.\\
For $r\in \frac{1}{e}\N$ define
\[K_r := \{ k\in K : v(\chi\circ N_{\E/\F}(k)-1)\geq r\}.\]
Then $K_r$ is a compact open subgroup of $K$.
\begin{lem}\label{afkkr}
 There exist $c_1, c_2\in \R_{> 0}$ such that 
 $$\frac{1}{[K:K_r]} \leq c_1\lceil r\rceil^{nm-1}q^{-\frac{r}{c_2}},$$
 for all $r\in \frac{1}{e}\N$.
\end{lem}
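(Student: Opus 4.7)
The plan is to reduce Lemma \ref{afkkr} to the polynomial sub-level-set estimate of Lemma \ref{epimv} by encoding $\chi\circ N_{\E/\F}-1$ as a polynomial in coordinates on $K$. First I would parametrise $K$ by $\co_\F^{mn}$ as follows: since $\alpha_1=1,\alpha_2,\ldots,\alpha_n$ is an $\co_\F$-basis of $\co_\E$, the condition $a_i\in 1+\pi\co_\E$ from the definition of $K$ becomes $a_{i1}\in 1+\pi\co_\F$ and $a_{ij}\in\pi\co_\F$ for $j\geq 2$. Setting $b_{i1}:=(a_{i1}-1)/\pi$ and $b_{ij}:=a_{ij}/\pi$ for $j\geq 2$ yields a polynomial homeomorphism $\Phi:\co_\F^{mn}\to K$ with $\Phi(0)=1$ whose Jacobian is bounded and bounded away from zero on the compact domain; hence its push-forward of the product measure agrees with Haar measure on $K$ up to a bounded multiplicative constant.

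Next, by Lemma \ref{polyuit}, $\chi\circ N_{\E/\F}\circ\mathsf{p}(a)=f(a)/g(a)$ with $f,g\in\co_\E[x_{ij}]$ and $f(a),g(a)\in\co_\E^\times$ for $\mathsf{p}(a)\in K$, so substituting $a(b)$ produces a single polynomial $\tilde h(b):=f(a(b))-g(a(b))\in\co_\E[b_{11},\ldots,b_{mn}]$ satisfying
\[ v\bigl(\chi\circ N_{\E/\F}(\Phi(b))-1\bigr) = v\bigl(\tilde h(b)\bigr) \]
for all $b\in\co_\F^{mn}$. Since $\Phi(0)=1$ forces $\tilde h(0)=0$ and the implicit hypothesis that $\chi\circ N_{\E/\F}$ is non-trivial on $K$ forces $\tilde h\not\equiv 0$, one has $\Phi^{-1}(K_r)=\{b\in\co_\F^{mn}\mid v(\tilde h(b))\geq r\}$.

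Finally, applying Lemma \ref{epimv} to $\tilde h$ (in $mn$ variables) with $N:=\lceil r\rceil\in\N$ (so $N\geq r$) yields a constant $C$ with
\[ \mu_{\co_\F^{mn}}(\Phi^{-1}(K_r)) \leq C\lceil r\rceil^{mn-1}q^{-r/m_{\tilde h}}, \]
and combining this with the Jacobian comparison gives $1/[K:K_r]=\mu_K(K_r)/\mu_K(K)\leq c_1\lceil r\rceil^{nm-1}q^{-r/c_2}$ with $c_2:=m_{\tilde h}$ and $c_1$ absorbing both the bounded Jacobian constant and the constant from Lemma \ref{epimv}. The main obstacle will be the measure-theoretic bookkeeping of the first step: since $\Phi$ is not a group homomorphism (the product of two elements of the form $1+\pi c$ carries extra $\pi^2$-terms), one must check carefully that the push-forward measure comparison still transfers the polynomial sub-level-set estimate into the desired bound on the subgroup index; once this is in place the rest is a direct specialisation of Lemma \ref{epimv}.
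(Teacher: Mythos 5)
Your proof follows the paper's approach: parametrise $K$ by $\co_\F^{mn}$, push $\chi\circ N_{\E/\F}-1$ through Lemma \ref{polyuit} into a single polynomial $\tilde h$ with a unit denominator absorbed (your $\tilde h$ is exactly the paper's $h=\psi(f)-\psi(g)$), and apply Lemma \ref{epimv} in dimension $mn$. Where the paper counts cosets of the congruence subgroups $K^{(N)}=\{\prod_i X_i(a_i): a_i\in 1+\pi^N\co_\E\}$, identifying $1/[K:K_r]$ with a normalized count over $(\co_\F/\pi^{N-1}\co_\F)^{mn}$, you phrase the identical count via Haar measure; these are the same computation. The ``main obstacle'' you flag is not actually there: $\Phi$ factors as the affine map $b\mapsto\bigl(1+\pi\sum_j b_{ij}\alpha_j\bigr)_i$ from $\co_\F^{mn}$ to $(1+\pi\co_\E)^m$, with constant Jacobian determinant $\pi^{mn}$, followed by the \emph{group isomorphism} $(1+\pi\co_\E)^m\to K$, $a\mapsto\prod_i X_i(a_i)$. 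Thus $\Phi$ carries normalized additive measure on $\co_\F^{mn}$ \emph{exactly} to normalized Haar measure on $K$, and $1/[K:K_r]=\mu\bigl(\Phi^{-1}(K_r)\bigr)$ is an equality, not a comparison up to a bounded factor — the failure of $\Phi$ itself to be a homomorphism never enters. Two small points you should supply rather than treat as implicit: first, $\tilde h\not\equiv 0$ must be argued (the paper does this by showing $\chi\circ N_{\E/\F}|_{T(\F)}=n\chi$ is non-trivial and using Zariski density of $T(\F)$); second, to invoke Lemma \ref{epimv} you need the truncation level $N$ to satisfy $N\geq r$ so that $v(\tilde h)\geq r$ is determined mod $\pi^N$ — your choice $N=\lceil r\rceil$ works, while the paper's $N=\lceil r\rceil+1$ accounts for the index shift in $K/K^{(N)}\cong(\co_\F/\pi^{N-1}\co_\F)^{mn}$.
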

\begin{proof}
Take $f,g\in \co_\E[x_{11},\ldots, x_{mn}]$ as in Lemma \ref{polyuit}.\\

Since the elements of $T(\F)$ are invariant under the Galois action:
$$ \chi \circ N_{\E/\F} |_{T(\F)} = n\chi|_{T(\F)}.$$
Since $T(\F)$ is Zariski dense in $\mathcal{T}$, there is a $t\in T(\F)$ such that $n\chi(t)\not=1$. So there is a $t\in T(\F)$ such that $\chi \circ N_{\E/\F}(t) \not=1$. Thus $\frac{f(a)}{g(a)}\not=1$.\\

Let $\mathbf{e} :\co_\E\rightarrow 1+\pi\co_\E$ by $\mathbf{e}(a) := 1+\pi a$.\\
Let $\mathsf{p}': (\co_\F^n)^m\rightarrow K$ defined by 
$$\mathsf{p}' (a) := \prod_{i=1}^m X_i(\mathbf{e}(\sum_{j=1}^n a_{ij}\alpha_i)).$$
Then $\mathsf{p}'$ is a bijection.
Now
$$\chi\circ N_{\E/\F}\circ \mathsf{p}'(a)= \frac{\psi(f)(a)}{\psi(g)(a)},$$
where $\psi: \E[x_{11},\ldots,x_{mn}]\rightarrow \E[x_{11},\ldots,x_{mn}]$ is the automorphism defined by $$\psi(x_{ij}) := \left\{ \begin{array}{cl} 1+\pi x_{ij} & \text{if } j=1 \\ \pi x_{ij} & \text{otherwise.}\end{array}\right.$$

The bijection $\mathsf{p}'$ gives a set corresponding to $K_r$ in $(\co_\F^n)^m$:
$$(\co_\F^n)^m_r := \mathsf{p}'^{-1}(K_r) = \{ a\in (\co_F^n)^m \mid v\left(\frac{\psi(f)(a)}{\psi(g)(a)}-1\right)\geq r\}.$$

Since for all $x\in(\co_\F^n)^m$, $\psi(g)(x)\in\co^\times$ we have $$v\left(\frac{\psi(f)(x)}{\psi(g)(x)}-1\right)=v(\psi(f)(x)-\psi(g)(x)).$$
Define $h(x) := \psi(f)(x)-\psi(g)(x)$, then $h\in \co_\E[x_{11},\ldots, x_{mn}]$ and
$$(\co_\F^n)^m_r=\{ a\in ((\co_\F)^n)^m \mid v(h(a))\geq r\}.$$
Define $K^{(N)} := \{ \prod_{i=1}^n X_i(a_i) : a_i\in 1+\pi^N\co_\E\}$.\\
Now $\mathsf{p'}(a) K^{(N)} = \mathsf{p'}(a') K^{(N)}$ if and only if $a_{ij}\equiv a'_{ij} \mod \pi^{N-1}\co_\F$. Let $N\geq r$, then $K^{(N)}<K_r$. Thus

$$\frac{1}{[K:K_r]} = \frac{1}{q^{nm(N-1)}}|\{ x\in (\co_\F/\pi^{N-1}\co_\F)^{mn}\mid v(h(x))\geq r\}|.$$ 
By Lemma \ref{epimv} there exists a $C$ such that for all $r$ and $N$ with $N\geq r$,
$$\frac{1}{q^{nmN}}|\{ x\in (\co_\F/\pi^N\co_\F)^{mn}\mid v(h(x))\geq r\}|\leq CN^{nm-1}q^{-\frac{r}{m_h}}.$$
Take $N = \lceil r \rceil +1$. Thus $\frac{1}{[K:K_r]}\leq C(\lceil r\rceil+1)^{nm-1}q^{-\frac{r}{m_h}}$.
\end{proof}

Define $\cst:= N_{\E/\F}(K)$. Since $K$ is compact, $\cst$ is a closed subgroup of $T(\F)$. We have $\cst<K$. Let $\cst_r := \{ s\in \cst \mid v(\chi(s)-1)\geq r\}$.
Define $T(\F)_r := \{ t\in T(\F) : v(\chi(t)-1)\geq r\}$.
\begin{lem}\label{afttr}
 $[K : K_r]=[\cst:\cst_r]\leq [T(\F)\cap K:T(\F)_r\cap K].$
\end{lem}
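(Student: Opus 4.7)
The plan is to prove the equality and the inequality separately, both by elementary group-theoretic arguments.

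For the equality $[K:K_r]=[\cst:\cst_r]$: by definition $\cst = N_{\E/\F}(K)$, so the norm map gives a surjection $N_{\E/\F}\colon K \twoheadrightarrow \cst$. The key observation is that
\[
k\in K_r \iff v(\chi(N_{\E/\F}(k))-1)\geq r \iff N_{\E/\F}(k)\in \cst_r,
\]
so $K_r = N_{\E/\F}^{-1}(\cst_r)\cap K$. Consequently the norm map descends to a bijection $K/K_r \xrightarrow{\sim} \cst/\cst_r$, which gives the desired equality of indices.

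For the inequality $[\cst:\cst_r]\leq [T(\F)\cap K : T(\F)_r\cap K]$: since $N_{\E/\F}$ lands in $T(\F)$ and $\cst < K$ was already noted, we have the inclusion $\cst \subset T(\F)\cap K$. I would then observe that
\[
\cst \cap (T(\F)_r\cap K) = \cst \cap T(\F)_r = \cst_r,
\]
the last equality because for $s\in \cst$ the condition $v(\chi(s)-1)\geq r$ is exactly the defining condition of $\cst_r$. By the second isomorphism theorem for groups, the inclusion $\cst \hookrightarrow T(\F)\cap K$ induces an injection
\[
\cst/\cst_r = \cst\big/\bigl(\cst\cap (T(\F)_r\cap K)\bigr) \hookrightarrow (T(\F)\cap K)\big/(T(\F)_r\cap K),
\]
from which the inequality of indices follows.

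There is no real obstacle here; the argument is purely formal once one notices that $K_r$ is the preimage under $N_{\E/\F}$ of $\cst_r$ and that $\cst_r = \cst\cap T(\F)_r$. The only point worth double-checking is that $\cst_r$ and $T(\F)_r\cap K$ are genuinely subgroups (they are, since $\chi$ is a character and the condition $v(\chi(\cdot)-1)\geq r$ cuts out a subgroup of the compact group in question), so that the indices and the second isomorphism theorem make sense.
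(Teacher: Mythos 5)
Your proof is correct and follows essentially the same route as the paper: identify $K_r$ as the preimage of $\cst_r$ under the surjection $N_{\E/\F}\colon K\twoheadrightarrow\cst$ (the paper phrases this as $\ker N_{\E/\F}\subset K_r$ together with $[\cst:\cst_r]=[K:K_r\ker N_{\E/\F}]$, which is the same observation), and then apply the second isomorphism theorem to the inclusion $\cst\hookrightarrow T(\F)\cap K$ using $\cst_r=\cst\cap T(\F)_r\cap K$. No substantive difference.
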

\begin{proof}
Since $N_{\E/\F}: K\rightarrow \cst$ is surjective,
\[ [\cst: \cst_r] = [K : K_r \ker N_{\E/\F}].\]
If $k \in \ker N_{\E/\F}$, then $\chi\circ N_{\E/\F}(k)=\chi(1)=1$. Thus $\ker N_{\E/\F}< K_r$. So $K_r \ker N_{\E/\F} = K_r$. Thus $[K:K_r]=[\cst:\cst_r]$.\\
Since $\cst<T(\F)\cap K$ and $\cst_r = T(\F)_r\cap K \cap \cst$, 
\[[\cst:\cst_r]\leq [T(\F)\cap K:T(\F)_r\cap K].\qedhere\]
\end{proof}

\begin{pro}\label{Xls}
 There exists an $\epsilon>0$ such that $|\chi(t)-1|^{-\epsilon}$ is locally summable on $T(\F)$.
\end{pro}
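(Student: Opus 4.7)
The plan is to pass from the subgroup-index estimate established in Lemmas \ref{afkkr} and \ref{afttr} to an integral bound via a layer-cake decomposition near the identity, and then to extend this to every point of $T(\F)$ using multiplicativity of $\chi$.

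First, I would shrink $K$ so that $K_0 := T(\F)\cap K$ satisfies $\chi(K_0) \subset 1+\pi\co$, which forces $v(\chi(t)-1) \geq 1$ on $K_0$. For each $r\in \N$, set $A_r := \{t \in K_0 : v(\chi(t)-1) \geq r\}$. This is precisely the open subgroup $T(\F)_r \cap K$, and chaining Lemmas \ref{afttr} and \ref{afkkr} gives
\[
\frac{\mu(A_r)}{\mu(K_0)} \;=\; \frac{1}{[K_0 : A_r]} \;\leq\; \frac{1}{[\cst : \cst_r]} \;=\; \frac{1}{[K:K_r]} \;\leq\; c_1 \lceil r\rceil^{nm-1} q^{-r/c_2}.
\]

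Next, since $|\chi(t)-1|^{-\epsilon} = q^{\epsilon v(\chi(t)-1)}$ and $v(\chi(\cdot)-1)$ takes integer values on $T(\F)$, a routine telescoping over the level sets yields
\[
\int_{K_0} |\chi(t)-1|^{-\epsilon}\, dt \;\leq\; \sum_{r \geq 1} q^{\epsilon r}\, \mu(A_r) \;\leq\; c_1 \mu(K_0) \sum_{r \geq 1} r^{nm-1} q^{r(\epsilon - 1/c_2)},
\]
which converges whenever $0 < \epsilon < 1/c_2$, the polynomial factor $r^{nm-1}$ being absorbed into the exponentially decaying $q^{-r/c_2}$. The set $\ker\chi\cap K_0$ contributes nothing because $\mu(A_r)\to 0$ forces $\mu(\ker\chi\cap K_0)=0$. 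This establishes local summability at the identity.

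Finally, I extend to arbitrary $t_0 \in T(\F)$ by two cases. If $\chi(t_0) \neq 1$, then continuity forces $|\chi(t)-1|$ to be bounded away from $0$ on some neighborhood, so the integrand is bounded there. If $\chi(t_0) = 1$, then on the coset $t_0 K_0$ one has $\chi(t_0 k)-1 = \chi(k)-1$, and Haar translation-invariance reduces the integral to the one over $K_0$ just bounded.

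The principal content of the argument is already lodged in Lemmas \ref{afkkr} and \ref{afttr}; what remains is the mechanical passage from an index bound to an integral bound. The one item needing care is that for a non-$\F$-split $T$ we control $\chi$ only after composing with the norm $N_{\E/\F}$, but Lemma \ref{afttr} performs exactly the comparison between $\chi$-level sets in $\cst$ and in $T(\F)\cap K$ that is needed here.
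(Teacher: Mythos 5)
Your proposal follows essentially the same route as the paper's proof: reduce to a neighborhood of a zero of $\chi - 1$ using translation-invariance, then bound the integral by a layer-cake sum controlled by the index estimates of Lemmas \ref{afkkr} and \ref{afttr}. The one imprecision is your claim that $v(\chi(\cdot)-1)$ takes integer values on $T(\F)$: since $\chi$ is a character of $T$ defined over the splitting field $\E$, the values $\chi(t)$ lie in $\E$, so $v(\chi(t)-1)$ lies in $\frac{1}{e}\Z$ where $e$ is the ramification degree of $\E/\F$. This is why Lemma \ref{afkkr} is stated for $r \in \frac{1}{e}\N$ and why the paper's final sum runs over $s \in \N$ with $r = s/e$. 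Your telescoping should therefore be over the finer lattice $\frac{1}{e}\N$ (or, equivalently, you may keep integer steps at the cost of an extra bounded factor $q^\epsilon$). With that adjustment the argument is correct and matches the paper's.
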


\begin{proof}
Let $t_0\in T$. If $\chi(t_0)\not=1$, then $|\chi(t)-1|^{-\epsilon}$ is constant on a neighborhood of $t_0$. Thus in particular $|\chi(t)-1|^{-\epsilon}$ is locally summable around $t_0$.\\
Assume that $\chi(t_0)=1$ and $\int_K|\chi(t)-1|^{-\epsilon}dt <\infty$ for some open compact subgroup $K<T$. Since $\chi(t_0t)=\chi(t)$ for $t\in K$,
$$\int_{t_oK}|\chi(t)-1|^{-\epsilon}dt=\int_K|\chi(t)-1|^{-\epsilon}dt<\infty. $$
So then $|\chi(t)-1|^{-\epsilon}$ is locally summable around $t_0$. Thus it is enough to show that for some open compact subgroup $K$
 $$\int_{K\cap T(\F)} |\chi(t)-1|^{-\epsilon}dt<\infty.$$
Take $K$ as before. Change $\mu$ such that $\mu(T(\F)\cap K)=1$. Take $c_1,c_2\in\R_{>0}$ as in Lemma \ref{afkkr}. Then
\begin{align*}
 \int_{K\cap T(\F)} |\chi(t)-1|^{-\epsilon}dt&\leq \sum_{s=0}^\infty q^{\epsilon\frac{s}{e}}\mu(T(\F)_{\frac{s}{e}})\leq \sum_{s=0}^\infty q^{\epsilon\frac{s}{e}}\frac{1}{[K : K_{\frac{s}{e}}]}\\
 &\leq \sum_{s=0}^\infty q^{\epsilon\frac{s}{e}}C\left\lceil\frac{s}{e}\right\rceil^{c_1} q^{-\frac{1}{c_2}\frac{s}{e}} =\sum_{s=0}^\infty C\left\lceil\frac{s}{e}\right\rceil^{c_1} q^{(\epsilon-\frac{1}{c_2})\frac{s}{e}},
\end{align*}
where the second inequality is due to Lemma \ref{afttr}. The last sum converges if $\epsilon<\frac{1}{c_2}$.
\end{proof}

From now on $T$ is a maximal $\F$-torus in $G$. Let $R(G,T)$ be the roots of $T$ and $G$. Define \[M := \max_{\alpha\in R(G,T)}\max_{i=1}^m \left<\alpha,X_i\right>.\]
\begin{gev}\label{Xls2}
 Let $\alpha\in R(G,T)$. The function $|\alpha(t)-1|^{-\epsilon}$ is locally summable on $T(\F)$ for $\epsilon < \frac{1}{M[\E;\F]}$.
\end{gev}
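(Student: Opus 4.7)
The plan is to specialize Proposition \ref{Xls} to $\chi = \alpha \in R(G,T)$ and simply track which explicit value of $\epsilon$ the proof of that proposition produces. In Proposition \ref{Xls} the admissible range is $\epsilon < 1/c_2$, where $c_2$ is the constant supplied by Lemma \ref{afkkr}, and that constant is nothing but $m_h$, the maximum exponent with which any single indeterminate $x_{ij}$ appears in the polynomial $h = \psi(f) - \psi(g) \in \co_\E[x_{11},\ldots,x_{mn}]$ built from Lemma \ref{polyuit}. So the task reduces to giving an upper bound for $m_h$ in terms of the data $M$ and $[\E:\F]$.

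For this, I would unwind the expression in Lemma \ref{polyuit} with $\chi = \alpha$. Writing $\mathsf{p}(a) = \prod_{i=1}^m X_i\bigl(\sum_j a_{ij}\alpha_j\bigr)$, we have
\[
\alpha \circ N_{\E/\F}\circ \mathsf{p}(a)\;=\;\prod_{i=1}^m \prod_{\sigma\in\mathrm{Gal}(\E/\F)} g_\sigma(a_{i1},\ldots,a_{in})^{z_{\sigma,i}},
\]
where $g_\sigma$ is linear and $z_{\sigma,i} = \langle \sigma^{-1}\cdot \alpha, X_i\rangle$. Separating positive and negative exponents yields $f$ and $g$. The key observation is that the variable $a_{ij}$ appears only in the factors indexed by that specific $i$, so its total exponent in $f$ (or in $g$) is at most $\sum_\sigma |z_{\sigma,i}|$.

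The main point is now to bound $|z_{\sigma,i}|$. Since $\mathrm{Gal}(\E/\F)$ acts on $X^*(T)$ by permuting the root system $R(G,T)$, we have $\sigma^{-1}\cdot\alpha \in R(G,T)$, and hence by the very definition of $M$,
\[
|z_{\sigma,i}| = |\langle \sigma^{-1}\cdot\alpha, X_i\rangle| \leq M.
\]
Summing over the $[\E:\F]$ Galois elements gives $\sum_\sigma |z_{\sigma,i}| \leq M[\E:\F]$, so each $a_{ij}$ has degree at most $M[\E:\F]$ in both $f$ and $g$.

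The substitutions $\psi(x_{ij}) = 1 + \pi x_{ij}$ or $\psi(x_{ij}) = \pi x_{ij}$ are affine in each variable and therefore do not raise the per-variable degree, and subtraction cannot raise it either. Hence $m_h \leq M[\E:\F]$, so Proposition \ref{Xls} applies with any $\epsilon < 1/(M[\E:\F])$, which is exactly the claim. The only non-routine ingredient is the Galois-invariance of $R(G,T)$ used to bound $|z_{\sigma,i}|$ by $M$; everything else is bookkeeping in the proof of Proposition \ref{Xls}.
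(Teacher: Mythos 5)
Your proposal is correct and follows essentially the same route as the paper: unwind the proof of Proposition \ref{Xls} to see that $\epsilon < 1/m_h$ suffices, use the factorization $f=\prod_i f_i$, $g=\prod_i g_i$ from Lemma \ref{polyuit} so that $m_h\leq\max_i\max(m_{f_i},m_{g_i})\leq\sum_\sigma|z_{\sigma,i}|$, and bound $|z_{\sigma,i}|\leq M$ via Galois-stability of $R(G,T)$. Your brief remark that the affine substitution $\psi$ and the subtraction do not increase the per-variable degree makes explicit a step that the paper compresses into the single inequality $m_h\leq\max(m_f,m_g)$, but the argument is otherwise the same.
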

\begin{proof}
By the proof of Proposition \ref{Xls}, if $\epsilon<\frac{1}{c_2}$ for the $c_2$ of Lemma \ref{afkkr}, the function $|\alpha(t)-1|^{-\epsilon}$ is locally summable. The $c_2$ of Lemma \ref{afkkr} is equal to $m_h$, where $h = \psi(f)-\psi(g)$ for the $g$ and $f$ of Lemma \ref{afkkr}. Therefore $m_h\leq \max(m_f,m_g)$. The proof of Lemma \ref{polyuit} shows that 
\begin{align*}
f(x_{11},\cdots,x_{mn})&=\prod_{i=1}^m f_i(x_{i1},\cdots,x_{in}),\\
g(x_{11},\cdots,x_{mn})&=\prod_{i=1}^m g_i(x_{i1},\cdots,x_{in}).
\end{align*}
Thus $m_f = \max_{i=1}^m m_{f_i}$ and $m_g=\max_{i=1}^m m_{g_i}$.\\
The $f_i$ and $g_i$ are such that
\[\alpha\circ N_{\E/\F}\circ X_i(\sum_{j=1}^na_j\alpha_j)=\prod_{\sigma\in\text{Gal}(\E/\F)}g_\sigma(a_1,\ldots,a_n)^{z_{i,\sigma}}=\frac{f_i(a_1,\ldots,a_n)}{g_i(a_1,\ldots,a_n)},\]
with $z_{i,\sigma} = \left<\sigma^{-1}\cdot\alpha,X_i\right>$. Therefore
\[ \max(m_{f_i},m_{g_i}) \leq \sum_{\sigma \in \text{Gal}(\E:\F)} |z_{i,\sigma}| \leq [\E:\F] M,\]
since $\sigma^{-1}\cdot \alpha\in R(G,T)$ for all $\sigma \in \text{Gal}(\E:\F)$. Thus \[m_h\leq \max(m_f,m_g) = \max(\max_{i=1}^m m_{f_i},\max_{i=1}^m m_{g_i}) \leq [\E:\F]M.\qedhere\]
\end{proof}

\begin{lem}\label{pall}
 Let $X$ be a space with measure $\mu$ and let $f:X\rightarrow \R_{\geq0}$ and $g:X\rightarrow \R_{\geq 0}$. Assume that $f^{-\epsilon}$ and $g^{-\epsilon}$ are locally summable if $0<\epsilon<\epsilon_o$. Then $(fg)^{-\epsilon}$ is locally summable if $0<\epsilon<\frac{\epsilon_o}{2}$.
\end{lem}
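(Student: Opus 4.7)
The plan is to reduce the statement to the Cauchy--Schwarz inequality (equivalently, H\"older with conjugate exponents $2, 2$). The key observation is that if $0 < \epsilon < \epsilon_o/2$, then $2\epsilon$ lies in the range $(0, \epsilon_o)$ where, by hypothesis, both $f^{-2\epsilon}$ and $g^{-2\epsilon}$ are locally summable.

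Concretely, to show $(fg)^{-\epsilon}$ is locally summable, I fix a point $x \in X$ and seek a compact neighborhood $K$ of $x$ on which $\int_K (fg)^{-\epsilon}\, d\mu < \infty$. By local summability of $f^{-2\epsilon}$ I can choose a compact neighborhood $K_1$ of $x$ with $\int_{K_1} f^{-2\epsilon}\, d\mu < \infty$, and similarly $K_2$ for $g^{-2\epsilon}$. Setting $K := K_1 \cap K_2$ gives a compact neighborhood of $x$ on which both $f^{-2\epsilon}$ and $g^{-2\epsilon}$ are integrable.

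On this $K$ I apply Cauchy--Schwarz to the non-negative measurable functions $f^{-\epsilon}$ and $g^{-\epsilon}$:
\[
\int_K (fg)^{-\epsilon}\, d\mu \;=\; \int_K f^{-\epsilon} g^{-\epsilon}\, d\mu \;\leq\; \left(\int_K f^{-2\epsilon}\, d\mu\right)^{\frac12}\left(\int_K g^{-2\epsilon}\, d\mu\right)^{\frac12},
\]
which is finite by construction. Hence $(fg)^{-\epsilon}$ is integrable on $K$, proving local summability at $x$.

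There is no genuine obstacle here; the only minor care needed is the choice of a common neighborhood $K$ and a convention for the integrand where $f$ or $g$ vanishes (on such null-free sets the integrand is understood as $+\infty$, and the Cauchy--Schwarz bound still holds with both sides possibly infinite, but here the right-hand side is finite by the summability hypothesis, forcing the left-hand side to be finite as well).
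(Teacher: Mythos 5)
Your proof is correct and matches the paper's argument: both rest on the observation that $2\epsilon < \epsilon_o$ forces $f^{-2\epsilon}$ and $g^{-2\epsilon}$ to be locally summable, and then Cauchy--Schwarz on $f^{-\epsilon}g^{-\epsilon}$ finishes the job; you simply spell out the Cauchy--Schwarz step and the choice of a common neighborhood more explicitly than the paper does.
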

\begin{proof}
If $f^{-\epsilon}$ is locally summable for all $\epsilon<\epsilon_o$, then $(f^2)^{-\epsilon}$ is locally summable for all $\epsilon<\frac{\epsilon_o}{2}$. Thus $f^{-\epsilon}$ and $g^{-\epsilon}$ are locally square integrable for all $\epsilon<\frac{\epsilon_o}{2}$. Then $(fg)^{-\epsilon}$ is locally summable for all $\epsilon<\frac{\epsilon_o}{2}$.
\end{proof}

\begin{ste}\label{onsls}
 If $\epsilon<\frac{1}{2^{|R(G,T)|-1}M[\E:\F]}$, then $|D(t)|^{-\epsilon}$ is locally summable on $T$.\\
 Moreover if $\epsilon<\frac{1}{2^{|R(G,T)|}M[\E:\F]}$, then, for all $n\in \Z_{\geq 0}$, the function $sd(\gamma)^n|D(t)|^{-\epsilon}$ is locally summable on $T$.
\end{ste}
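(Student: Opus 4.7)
The plan is to treat the two statements separately. The first is a direct iteration of Lemma \ref{pall} on top of Corollary \ref{Xls2}, and the second reduces to the first by Cauchy--Schwarz, once local summability of $sd^{2n}$ is in hand.

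For the first statement, I would expand
\[|D(t)|^{-\epsilon}=\prod_{\alpha\in R(G,T)}|\alpha(t)-1|^{-\epsilon}.\]
By Corollary \ref{Xls2}, each of the $|R(G,T)|$ factors on the right is locally summable for $\epsilon<\tfrac{1}{M[\E:\F]}$. Iterating Lemma \ref{pall} by pairwise grouping, the product of $k$ such factors is locally summable for $\epsilon<\tfrac{1}{2^{k-1}M[\E:\F]}$, and specialising to $k=|R(G,T)|$ gives the first claim.

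For the second statement I would apply Cauchy--Schwarz on a compact neighborhood $\omega\subset T$:
\[\int_\omega sd(t)^n|D(t)|^{-\epsilon}\,dt\le\Bigl(\int_\omega sd(t)^{2n}\,dt\Bigr)^{1/2}\Bigl(\int_\omega|D(t)|^{-2\epsilon}\,dt\Bigr)^{1/2}.\]
The second factor is finite as soon as $2\epsilon<\tfrac{1}{2^{|R(G,T)|-1}M[\E:\F]}$, i.e.\ as soon as $\epsilon<\tfrac{1}{2^{|R(G,T)|}M[\E:\F]}$, by the first part of the theorem; this is exactly the range allowed in the statement. The substantive point is therefore local summability of $sd(t)^{2n}$.

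For that, I would use the estimate $sd(t)\le -\log_q|D(t)|+C_\omega$, valid on every compact $\omega\subset T$. It holds because each $|\alpha(t)-1|$ is bounded above on $\omega$, giving a uniform lower bound $v(\alpha(t)-1)\ge -C_\alpha$; then the maximum of the $v(\alpha(t)-1)$ differs from their sum $v(D(t))=\sum_\alpha v(\alpha(t)-1)$ by at most $\sum_\alpha C_\alpha$. Since $(-\log x)^{2n}\le C'_{n,\delta}\,x^{-\delta}$ as $x\to 0^+$ for every $\delta>0$, I obtain $sd(t)^{2n}\le C''_{n,\delta,\omega}|D(t)|^{-\delta}$ on $\omega$. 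Choosing $\delta$ below the threshold of the first part makes $|D(t)|^{-\delta}$ locally summable, hence so is $sd(t)^{2n}$, and Cauchy--Schwarz closes the argument. The only genuine obstacle is the a priori comparison $sd(t)\le -\log_q|D(t)|+C_\omega$; the rest is a mechanical combination of Cauchy--Schwarz with the first part and Lemma \ref{pall}.
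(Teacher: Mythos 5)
Your proof is correct and follows the paper's overall structure quite closely: part~1 by factoring $D$ and iterating Lemma~\ref{pall} on Corollary~\ref{Xls2}, and part~2 by a Cauchy--Schwarz reduction (which is exactly what Lemma~\ref{pall} encodes) to local summability of $sd^{2n}$ and of $|D|^{-2\epsilon}$. The only substantive divergence is in how $sd^{2n}$ is handled. The paper works root by root: writing $sd_\alpha(t)=v(\alpha(t)-1)$, it observes on a neighbourhood where $v(\alpha(t)-1)\geq 0$ that $sd_\alpha(t)^n\leq N\,|\alpha(t)-1|^{-\epsilon}$ for some $N$, invokes Proposition~\ref{Xls} to conclude that each $sd_\alpha^n$ is locally summable, and then deduces local summability of $sd^{2n}=\bigl(\max_\alpha sd_\alpha\bigr)^{2n}\leq\sum_\alpha sd_\alpha^{2n}$. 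You instead aggregate first, proving the comparison $sd(t)\leq -\log_q|D(t)|+C_\omega$ on compacts and then dominating $sd^{2n}$ by $C|D|^{-\delta}$ so that the already-proved first half of the theorem finishes the job. Both routes rest on the same elementary fact that a power of the valuation is dominated by $|\cdot|^{-\epsilon}$; yours is slightly more economical since it reuses part~1 directly rather than appealing to Proposition~\ref{Xls} a second time, while the paper's per-root argument makes the role of the individual factors $|\alpha(t)-1|$ more transparent. One tiny point worth flagging in your write-up: your comparison $sd(t)\leq -\log_q|D(t)|+C_\omega$ gives an upper bound that may be negative in regions where $sd(t)<0$; to pass to $sd(t)^{2n}\leq C''|D(t)|^{-\delta}$ one should also note that $sd$ is bounded below on $\omega$ (again by compactness), so the even power is controlled either by the comparison (when $sd\geq 0$) or by a constant (when $sd<0$).
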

\begin{proof}
That $|D(t)|^{-\epsilon}$ is locally summable on $T$ for $0<\epsilon<\frac{1}{2^{|R(G,T)|-1}M[\E:\F]}$ follows from the Corollary \ref{Xls2} and Lemma \ref{pall}.

First we show that $sd_\alpha^n$ is locally summable for all $n\in\Z_{\geq 0}$.\\
Let $t_o\in T$. If $\alpha(t)\not= 1$, then $sd_\alpha$ is locally constant on $t_o$ and hence $sd_\alpha^n$ is locally summable around $t_o$.\\
If $\alpha(t_o)=1$, then let $U := \alpha^{-1}(\co)$ be a neighborhood of $t_o$. So it is enough to show that $sd_\alpha^n$ is locally summable in $U$.\\

By Proposition \ref{Xls} there is an $\epsilon>0$ such that $|\alpha(t)-1|^{-\epsilon}$ is locally summable on $T$. Since $|\alpha(t)-1|^{-1} = q^{sd_\alpha(t)}$ if $v(\alpha(t)-1)\geq 0$, there is a $N\in \N$ such that $sd_\alpha(t)^n\leq N|\alpha(t)-1|^{-\epsilon}$ for all $t\in U$. Thus $sd_\alpha(t)^n$ is locally summable on $U$, since $N|\alpha(t)-1|^{-\epsilon}$ is.\\

If $0<\epsilon<\frac{1}{2^{|R(G,T)|}M[\E:\F]}$, then $|D(t)|^{-2\epsilon}$ is locally summable by the first statement of this Theorem. Since $sd_\alpha(t)^{n}$ is locally summable for all $n\in \Z_{\geq 0}$, also $sd(t)^{2n}$ is locally summable for all $n\in\Z_{\geq 0}$. Thus $sd(t)^n|D(t)|^{-\epsilon}$ is locally summable for $0<\epsilon<\frac{1}{2^{|R(G,T)|}M[\E:\F]}$, because $sd(t)^{2n}$ and $|D(t)|^{-2\epsilon}$ are locally summable.
\end{proof}
In the case that $\text{char } \F=0$ and $\epsilon$ is small, Theorem \ref{onsls} has been proven by Harish-Chandra in \cite[Lemma 43]{HC70}.

\subsection{Local summability of the character}
\begin{lem}\label{iccwtic}
 Let $\omega\subset G$ be compact and $T$ a maximal torus. Then $^G\omega\cap T$ is  contained in a compact subset of $T$, i.e. is bounded.
\end{lem}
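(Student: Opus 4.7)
The plan is to reduce the statement to a boundedness assertion on character values and then use a faithful linear representation to control these.

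The first step is to choose a faithful rational representation $\rho:\cg\rightarrow GL(V)$ defined over $\F$, which exists because $\cg$ is linear algebraic. Since $\omega\cup\omega^{-1}$ is compact in $G$, the set $\rho(\omega\cup\omega^{-1})$ is a compact subset of $GL(V)$, and in particular the coefficients of the characteristic polynomial of $\rho(x)$ stay in a bounded subset of $\F$ as $x$ ranges over $\omega\cup\omega^{-1}$. Since characteristic polynomials are invariant under conjugation, the same boundedness holds for the characteristic polynomials of $\rho(t)$ and $\rho(t^{-1})$ for any $t\in {}^G\omega\cap T$.

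Next, I use the non-archimedean estimate that the roots of a monic polynomial with coefficients in a bounded subset of an algebraic closure of $\F$ are themselves bounded in absolute value: if $f(X)=X^n+c_{n-1}X^{n-1}+\cdots+c_0$ then any root $\alpha$ satisfies $|\alpha|\leq\max_i|c_i|^{1/(n-i)}$ by the ultrametric inequality applied to $f(\alpha)=0$. Let $\E$ be a finite Galois extension of $\F$ over which $T$ splits, and extend $v$ to $\E$. Fix a basis $\chi_1,\ldots,\chi_m$ of $X^*(T)$. The eigenvalues of $\rho(t)$ over $\E$ are the values $\lambda_j(t)$ of the weights $\lambda_j\in X^*(T)$ of $\rho|_T$, so their valuations $v(\lambda_j(t))$ are bounded above. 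Applying the same reasoning to $t^{-1}\in {}^G\omega^{-1}\cap T$ shows they are also bounded below. Hence $|v(\lambda_j(t))|$ is bounded uniformly over $t\in {}^G\omega\cap T$ for every weight $\lambda_j$ of $\rho|_T$.

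Since $\rho$ is faithful, the weights of $\rho|_T$ separate points of $T$, so they span $X^*(T)\otimes\Q$; equivalently they generate a subgroup of finite index $N$ in $X^*(T)$. For every $\chi\in X^*(T)$ one can therefore write $N\chi=\sum_j a_j\lambda_j$ with $a_j\in\Z$, giving
\[
N\,v(\chi(t))=\sum_j a_j\,v(\lambda_j(t)),
\]
so $v(\chi(t))$ is bounded in absolute value on ${}^G\omega\cap T$. The map
\[
T\longrightarrow \R^{m},\qquad t\longmapsto(v(\chi_1(t)),\ldots,v(\chi_m(t)))
\]
is continuous with discrete image (it lands in $\tfrac1e\Z^m$, where $e$ is the ramification index of $\E/\F$), and each fibre is a coset of the maximal compact subgroup of $T$, hence compact. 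A bounded subset of a discrete lattice is finite, so ${}^G\omega\cap T$ is contained in the preimage of a finite set, which is a finite union of compact sets and hence compact.

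The only mildly delicate point is the bookkeeping with the extension $\E$: the valuation of a root of a polynomial over $\F$ need not be integral, but only rational with denominator dividing $[\E:\F]$, which is still discrete, so the finiteness argument goes through. Everything else reduces to the familiar fact that conjugation preserves characteristic polynomials combined with the ultrametric bound on roots.
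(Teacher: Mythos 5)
Your argument is correct, but it takes a genuinely different route from the paper. The paper's proof works inside the Bruhat--Tits framework: it shows that for a split torus the set $\{t\in T \mid d(t)\le r\}$ is bounded, where $d$ is the displacement function on the extended building, then invokes continuity of $d$ as a class function to see that $d({}^G\omega)$ is compact, and finally handles a non-split $T$ by base change to a splitting field $\E$. Your proof instead uses a faithful rational representation $\rho$, conjugation-invariance of characteristic polynomials, the ultrametric bound on roots, and the fact that the weights of $\rho|_T$ generate a finite-index subgroup of $X^*(T)$ to deduce that $t\mapsto (v(\chi_i(t)))_i$ has bounded, hence finite, image on ${}^G\omega\cap T$. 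This is essentially the classical Harish-Chandra-style argument; indeed, right after this lemma the paper points out that for the ``compact modulo $Z(G)$'' variant there is a ``more elementary proof using $g\mapsto\det(\mathrm{ad}(g)-x)$'' \cite[Lemma~39]{HC70}, which is the same idea as yours with the adjoint representation in place of an arbitrary faithful one. Your approach has the advantage of being self-contained and not depending on the building or on the continuity of the displacement function, while the paper's choice keeps all boundedness arguments anchored in building-theoretic language, consistent with the rest of the article. One small slip in your exposition: a bound $|\lambda_j(t)|\le M$ on the eigenvalues gives a \emph{lower} bound $v(\lambda_j(t))\ge -\log_q M$ on the valuation (with the usual normalisation $|c|=q^{-v(c)}$), not an upper bound; it is then the application to $t^{-1}\in{}^G(\omega^{-1})\cap T$ that supplies the upper bound. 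The two together still give $|v(\lambda_j(t))|$ bounded, so the conclusion is unaffected.
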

\begin{proof}
Assume that $T$ is $\F$-split.\\ 
Let $d:G\rightarrow \R$ be the displacement function of $B_e$.\\
CLAIM: For each $r\in \R$ the set $\{ t\in T \mid d(t)\leq r\}$ is bounded.\\
By the proof of \cite[Proposition 7.4.25]{BT72} there is a retraction $\rho:B_e\rightarrow \A_e$ defined by $y=u\rho(y)$ for some $u\in U^+$. Now $\rho$ is $T$-equivariant: \\ $tux=tut^{-1}tx$, so $\rho(tux)=tx=t\rho(ux)$. Thus \[d(x,tx)=d(\rho(ux),\rho(tux))\leq d(ux,tux).\] Thus $d(t) = d(tx,x)$ for $x\in \A_e(T)$. Therefore $d(t) = d(v(t),\oo)$. Since there are only finitely many points $x\in T\oo$ with $d(x,\oo)\leq r$, the set $\{ t\in T \mid d(t)\leq r\}$ is bounded.\\

The function $g\mapsto d(g)$ is a continuous class function, see \cite{Mo00} and \cite[Lemma 3.4.7]{DB02b}. Thus the image of $^G\omega$ is compact in $\R$. So $^G\omega\cap T$ is bounded.\\

Now we go to the general case:\\
Let $\E$ be a field extension of $\F$ such that $\ct$ is $\E$-split. Since $^{\cg(\E)}\omega\cap \ct(\E)$ is bounded and $^G\omega\cap T\subset \;^{\cg(\E)}\omega\cap \ct(\E)$, also $^G\omega\cap T$ is bounded.
\end{proof}

If $\omega$ is compact modulo $Z(G)$, then $^G\omega\cap T$ is also compact modulo $Z(G)$. This could be proven in the same way as Lemma \ref{iccwtic}; by the displacement function on the reduced building. There is in this case a more elementary proof using $g\mapsto \det(ad(g)-x)$, see \cite[Lemma 39]{HC70}.

\begin{pro}\label{ontglt}
 Let $T$ be a maximal torus of $G$ containing a maximal split torus $S$. The function $\gamma \mapsto (ht(\tilde{\Phi})sd(\gamma)+1)^m |D(\gamma)|^{-\frac{1}{2}-\epsilon}$ is locally summable on $^GT$ for small $\epsilon\geq 0$.
\end{pro}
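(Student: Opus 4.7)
The plan is to execute the calculation (\ref{berinto}) sketched in the introduction, using exactly the tools developed in the preceding sections: the Weyl integration formula (part (1) of the main theorem), the bound on the Weyl integral (Proposition \ref{Iybound}), Lemma \ref{iccwtic}, and the local summability statement for $sd(t)^k|D(t)|^{-\epsilon}$ (Theorem \ref{onsls}). The key observation is that $sd$ is a conjugation-invariant function on regular semisimple elements, since conjugation by $g\in G$ sends $Z_G(\gamma)$ to $Z_G(g\gamma g^{-1})$ and permutes the root system accordingly, so the factor $(ht(\tilde{\Phi})sd(t)+1)^m$ can be pulled past the inner integral over $G/T$ in the Weyl integration formula.

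More precisely, fix a compact set $\omega\subset G$ and pick $f_0\in C_c^\infty(G)$ with $f_0\geq 1_\omega$. First, apply part (1) of the main theorem with the function $x\mapsto f_0(x)(ht(\tilde{\Phi})sd(x)+1)^m$ to obtain
\[\int_{^GT}f_0(x)(ht(\tilde{\Phi})sd(x)+1)^m|D(x)|^{-\frac12-\epsilon}dx = \frac{1}{|N_G(T)/T|}\int_T (ht(\tilde{\Phi})sd(t)+1)^m |D(t)|^{\frac12-\epsilon}\Bigl(\int_{G/T}f_0(gtg^{-1})dg\Bigr)dt.\]
Second, since $T\supset S$ implies $T\subset Z_G(S)$, Lemma \ref{iccwtic} shows that $^G\mathrm{supp}(f_0)\cap T$ is contained in a compact subset $\omega'\subset T$, and Proposition \ref{Iybound} applied to $f_0$ and $\omega'$ produces a constant $C$ such that for all regular $t\in T\cap \omega'$,
\[\int_{G/T}f_0(gtg^{-1})dg \leq C(ht(\tilde{\Phi})sd(t)+1)^{n}|D(t)|^{-\frac12},\qquad n=\dim\A_a.\]
Substituting yields an upper bound of the form
\[\frac{C}{|N_G(T)/T|}\int_{T\cap\omega'} (ht(\tilde{\Phi})sd(t)+1)^{m+n}|D(t)|^{-\epsilon}dt,\]
and this is finite for all sufficiently small $\epsilon\geq 0$ by Theorem \ref{onsls}, since the integrand is compactly supported on $T$.

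There is no real obstacle; once the Weyl integration formula, the Weyl-integral estimate, and the summability of $sd^k|D|^{-\epsilon}$ are in hand, the proof is essentially the four-line computation (\ref{berinto}). The only point that deserves a remark is the verification that $sd$ is conjugation-invariant (so that it passes from the $\gamma$-side to the $t$-side of the Weyl integration formula without trouble); the hypothesis $S\subset T$ is what lets us apply Proposition \ref{Iybound} directly to elements of $T$, since it forces $T\subset Z_G(S)$.
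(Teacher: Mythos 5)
Your proposal is correct and follows essentially the same route as the paper: apply the Weyl integration formula, use conjugation invariance to pull the $sd$ and $D$ factors out of the inner integral, bound the remaining inner integral by Proposition \ref{Iybound}, restrict the outer integral to a compact set via Lemma \ref{iccwtic}, and conclude with Theorem \ref{onsls}. The only cosmetic difference is your majorization by a smooth $f_0 \geq 1_\omega$, which is unnecessary here since $1_\omega$ is already locally constant and compactly supported (so lies in $C_c^\infty(G)$) in the $p$-adic setting.
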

\begin{proof}(See \cite[VII,\S1]{HC70}) Let $\omega\subset G$ be compact open. By the Weyl integration formula and Proposition \ref{Iybound}: 
\begin{align*}
 \int_{^GT} &1_{\omega}(g)(ht(\tilde{\Phi})sd(g)+1)^m |D(g)|^{-\frac{1}{2}-\epsilon}dg\\
 &= |W|^{-1}\int_T |D(t)|\int_{T\backslash G}1_\omega(g^{-1}tg)(ht(\tilde{\Phi})sd(g^{-1}tg)+1)^m |D(g^{-1}tg)|^{-\frac{1}{2}-\epsilon}dgdt\\
 &= |W|^{-1}\int_T |D(t)|\int_{T\backslash G}1_\omega(g^{-1}tg)(ht(\tilde{\Phi})sd(t)+1)^m |D(t)|^{-\frac{1}{2}-\epsilon}dgdt\\
 &\leq C \int_T 1_{\Omega}(t) |D(t)|^{\frac{1}{2}} (ht(\tilde{\Phi})sd(t)+1)^{n+m} |D(t)|^{-\frac{1}{2}-\epsilon}dt,
\end{align*}
where $\Omega\subset T$ is compact and $^G(\omega)\cap T \subset \Omega$ (see Lemma \ref{iccwtic}). The right-hand-side is finite by Theorem \ref{onsls}.
\end{proof}

\begin{ste}\label{theresult}
 Let $(\rho,V)$ be a $G$-representation of finite length with character $\theta$ and $f \in C^\infty_c(G)$, then for every torus $T$ containing $S$:
 $$\int_{^GT} f(g)\theta(g) dg <\infty.$$
\end{ste}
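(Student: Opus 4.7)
The plan is to run, in positive characteristic, the four-step integration argument of Harish-Chandra that the authors sketched as equation~(\ref{berinto}) in the introduction; all four ingredients are now available as results of the preceding sections. It suffices to show $\int_{{^GT}}|f(g)\theta(g)|\,dg<\infty$, and since a representation of finite length has character equal to a finite sum of characters of irreducible (hence admissible) subquotients, I assume $(\rho,V)$ is irreducible admissible of some finite level $e$. Replacing $f$ by $\|f\|_\infty\cdot 1_{\mathrm{supp}(f)}$, which still lies in $C_c^\infty(G)$ because $\mathrm{supp}(f)$ is open-compact, only enlarges the integrand; so I may assume $f = C_f\cdot 1_K$ for some open-compact $K\subseteq G$.

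First I apply Proposition~\ref{afschattingalgemeen} at each point of $K$. For every $g\in G$ that proposition gives a neighborhood $gP_{[x,gx]}$ and a constant $C_g$ with $|\theta(\gamma)|\leq C_g(ht(\Phi)sd(\gamma)+1)^n|D(\gamma)|^{-\frac12}$ for every regular semisimple $\gamma\in{^GZ_G(S)}\cap gP_{[x,gx]}$. Because $T\supseteq S$, every element of $T$ centralizes $S$, so $T\subseteq Z_G(S)$ and hence ${^GT}\subseteq{^GZ_G(S)}$; the bound therefore applies at every regular semisimple point of ${^GT}$. Extracting a finite subcover of the compact set $K$ produces a single constant $C_1$ with
$$|f(g)\theta(g)|\leq C_1\cdot 1_K(g)\,(ht(\Phi)sd(g)+1)^n|D(g)|^{-\frac12}$$
for all regular semisimple $g\in {^GT}$.

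Next I apply the Weyl integration formula (statement~1 of the main Theorem, with $\epsilon=0$), using the $G$-invariance of $sd$ and $|D|$:
$$\int_{{^GT}}|f\theta|\,dg\leq \frac{C_1}{|W|}\int_T |D(t)|^{\frac12}(ht(\Phi)sd(t)+1)^n\int_{G/T} 1_{K}(gtg^{-1})\,dg\,dt.$$
Statement~4 of the main Theorem (applicable because $T\subseteq Z_G(S)$) produces a constant $C_2$ such that the inner orbital integral is bounded by $C_2|D(t)|^{-\frac12}$, which cancels the $|D(t)|^{\frac12}$ weight. By Lemma~\ref{iccwtic}, the set ${^GK}\cap T$ is contained in a compact $\Omega\subseteq T$, so the remaining integral collapses to
$$\frac{C_1C_2}{|W|}\int_{\Omega}(ht(\Phi)sd(t)+1)^n\,dt,$$
which is finite by local summability of $sd^n$ on $T$ (statement~2 of the main Theorem, or equivalently Theorem~\ref{onsls} with $\epsilon=0$).

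There is no serious obstacle; the real content is already in Propositions~\ref{afschattingalgemeen} and~\ref{Iybound} and in Theorem~\ref{onsls}. The one delicate point is keeping hypotheses aligned: the condition $T\supseteq S$ is used twice, first to make the character bound of Proposition~\ref{afschattingalgemeen} (which requires the argument to lie in ${^GZ_G(S)}$) apply on all of ${^GT}$, and second to make statement~4 of the main Theorem available for the orbital integral bound. Once these inclusions are noted, the argument reduces to the calculation~(\ref{berinto}).
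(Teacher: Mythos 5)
Your proof is correct and follows essentially the same route as the paper: the paper's own proof simply cites Propositions~\ref{afschattingalgemeen} and~\ref{ontglt}, and your argument is just Proposition~\ref{ontglt} unpacked (Weyl integration formula, the orbital-integral bound of Proposition~\ref{Iybound}, Lemma~\ref{iccwtic}, and Theorem~\ref{onsls}) combined with Proposition~\ref{afschattingalgemeen}. One cosmetic caveat: when you invoke ``statement~4'' for the bound $C_2|D(t)|^{-1/2}$, what the paper actually establishes (Proposition~\ref{Iybound}) carries an extra polynomial factor in $sd(t)$; this changes nothing since that factor is absorbed into the $(ht(\tilde{\Phi})sd(t)+1)^n$ term handled by Theorem~\ref{onsls}, but citing Proposition~\ref{Iybound} directly would be safer.
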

\begin{proof} This follows from Propositions \ref{afschattingalgemeen} and \ref{ontglt}.\end{proof}

The following Corollary has already been proven by Van Dijk in \cite[Theorem 3]{Di72} by other means.
\begin{gev}
 Assume that $G$ is quasi-split. Let $\chi$ be a representation of $T:= Z_G(S)$ of finite length. Then the character of $ind_B^G(\chi)$ is locally summable.
\end{gev}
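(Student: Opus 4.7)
The plan is to reduce the corollary to Theorem \ref{theresult} by exploiting two standard features of parabolically induced representations: finite length is preserved, and the character is supported (on the regular semisimple locus) on conjugates of the inducing torus. Since $G$ is quasi-split, $T = Z_G(S)$ is a maximal $\F$-torus of $G$ containing the maximal split torus $S$, and the Borel subgroup $B$ has Levi decomposition $B = TU$. Thus the hypothesis of Theorem \ref{theresult} is available for this specific $T$.

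First I would note that $\pi := \text{ind}_B^G(\chi)$ is admissible of finite length, as parabolic induction from a finite-length representation of a Levi yields a finite-length representation of $G$; hence its character $\theta$ is well defined as a locally constant function on the regular semisimple locus $G^{rs}$. Next, I would invoke the standard formula for the character of a parabolically induced representation at a regular semisimple element $g$: one has
\[
\theta(g) = |D(g)|^{-1/2} \sum_{w} \delta_B^{1/2}(g_w)\,\theta_\chi(g_w),
\]
where the sum runs over a suitable set of Weyl-type representatives and $g_w$ is a conjugate of $g$ lying in $T$, the expression being zero whenever $g$ has no conjugate in $T$. The upshot is that $\theta$ vanishes on $G^{rs} \setminus {}^GT$. (This formula can be deduced from the Mackey-type analysis of the trace of $\pi(f)$ via the geometric lemma; it is the $p$-adic analogue of van Dijk's computation and does not require characteristic zero.)

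Since the non-regular semisimple locus is contained in a proper Zariski-closed subvariety of $G$ and thus has Haar measure zero, for any $f \in C_c^\infty(G)$ we have
\[
\int_G |f(g)\,\theta(g)|\, dg = \int_{G^{rs}} |f(g)\,\theta(g)|\, dg = \int_{{}^GT} |f(g)\,\theta(g)|\, dg,
\]
and the right-hand side is finite by Theorem \ref{theresult} applied to the finite-length representation $\pi$ and the torus $T \supset S$.

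The one step that needs care is the vanishing of $\theta$ on $G^{rs} \setminus {}^GT$: while it is standard, most references are written in characteristic zero, so I would include a brief justification by computing $\text{tr}(\pi(f))$ via the usual kernel formula on $B\backslash G$ and changing variables to an orbital integral over conjugates of $T$, observing that if $g$ has no $G$-conjugate in $T$ then the relevant set of $B$-cosets is empty. With this in hand the corollary is immediate from Theorem \ref{theresult}.
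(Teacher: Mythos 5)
Your reduction is exactly the one the paper uses: show that the character $\theta$ of $ind_B^G(\chi)$ vanishes on the regular semisimple locus outside $^GT$, then invoke Theorem \ref{theresult} for the torus $T=Z_G(S)\supset S$. The difference lies in how the support statement is justified. You lean primarily on van Dijk's explicit character formula (which computes $\theta$ on all of the regular semisimple set) and correctly flag that its references are typically in characteristic zero; your proposed fallback is a direct kernel/Mackey-type trace computation. The paper takes that fallback as the main argument: for $\gamma$ regular semisimple outside $^GT$ it uses \cite[Lemma 6.5]{MS12} to produce a compact open $K$ with $\gamma K\subset\,^G{Z_G(\gamma)}$, sets $K_o:=K\cap\gamma K\gamma^{-1}$, observes $^GT\cap K_o\gamma K_o=\emptyset$, and concludes $tr(\pi(e_{K'}*\gamma*e_{K'}))=0$ for every compact open $K'\subset K_o$ from the double-coset description of the kernel. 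This establishes only the vanishing, not the explicit formula, but it is elementary, self-contained, and valid in any residue characteristic, which sidesteps exactly the concern you raised. One point worth sharpening in your sketch: it is not enough that $\gamma$ itself has no $G$-conjugate in $T$; the trace computation requires that an entire bi-$K'$-invariant neighborhood of $\gamma$ be disjoint from $^GT$, and that is precisely what the cited lemma (producing $K$ with $\gamma K\subset\,^G{Z_G(\gamma)}$) is there to supply.
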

\begin{proof}
Since $G$ is quasi-split, $T$ is a maximal torus. Let $\gamma$ be a regular semi-simple element not in $^GT$. Let $K<G$ be a compact open subgroup such that $\gamma K\subset\, ^G{Z_G(\gamma)}$ (see \cite[Lemma 6.5]{MS12} for a specific $K$). Let $K_o := K\cap \gamma K\gamma^{-1}$, then $K_o\gamma K_o\subset \gamma K\subset ^G{Z_G(\gamma)}$. Thus $^GT\cap K_o\gamma K_o=\emptyset$. So for every open compact subgroup $K'\subset K_o$ we have 
$TgK'\cap TgK'\gamma K'=\emptyset$ for all $g\in G$. Since the character of $ind_B^G \chi$ is supported on $^GT$,
$$tr(ind_B^G(\chi) (e_{K'}*\gamma * e_{K'}))=0.$$
Hence the character of the induced representation is zero on the regular semi-simple elements outside $^GT$. Now apply Theorem \ref{theresult}.
\end{proof}
\section{Future work}
This article is based on the study of fixed points in the reduced and extended building of compact regular semi-simple element in the centralizer of a maximal split torus. The understanding of the distribution of these fixed points gives the estimates for the character of an admissible smooth representation and the Weyl integration formula. In the last chapter we saw that both upper-bounds are small enough to prove that the character of a finite length representation is locally summable on $^G{T}$, for $T$ containing a maximal split torus. 

A study of fixed points for general regular semi-simple elements should lead to similar estimates. We hope that these upper-bounds can be chosen small enough to prove that for every maximal torus $T$ the character is locally summable on $^GT$. In the case that there are finitely many conjugacy classes of tori the locally summability of the character follows from the locally summability on $^GT$. However in positive characteristic there could be infinitely many conjugacy classes of tori. In that case the estimates should be synchronized in some way.

In the last section we introduced a generalization of the norm map $N_{\E/\F} : T(\E)\rightarrow T(\F)$. It would be interesting to see whether this map has analogous properties as the regular norm map. In particular whether the norm map is open and whether $[T(\F):N_{\E/\F}(T(\E))]<\infty$. 

\section{Appendix}
In this appendix we give a proof of the Weyl integration formula:
\begin{ste}[The Weyl Integration Formula]\label{WIF}
 Let $G$ be a $p$-adic reductive group, $T$ a maximal torus of $G$ and $W=N_G(T)/T$ its Weyl group. Assume that the measures on $G$, $T$ and $G/T$ are such that for all $f\in C_C^\infty(G)$:
 \[ \int_Gf(g)dg = \int_{G/T} \int_T f(gt)dtdg^*.\]
 Let $f\in C_c^\infty(G)$, then
 \[ \int_{^GT} f(g) dg = \frac{1}{|W|}\int_T \int_{G/T} f(gtg^{-1})dg^*dt.\]
\end{ste}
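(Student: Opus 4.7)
The plan is to exhibit the formula as the change-of-variables identity for the conjugation map
\[ \Phi : G/T \times T^{\mathrm{reg}} \longrightarrow {}^G T^{\mathrm{reg}}, \qquad (gT, t) \longmapsto g t g^{-1}, \]
on the regular semisimple locus $T^{\mathrm{reg}} = \{t \in T : D(t) \neq 0\}$.

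First, I would note that $T^{\mathrm{reg}}$ is Zariski-open in $T$ and that $^GT \setminus {}^GT^{\mathrm{reg}}$ is Haar-null, so it suffices to prove the identity for $f$ supported in $^GT^{\mathrm{reg}}$. Since $Z_G(t) = T$ for $t \in T^{\mathrm{reg}}$, two preimages $(gT, t), (g'T, t')$ of a common point must satisfy $g'^{-1}g \in N_G(T)$ with $t' = (g'^{-1}g)^{-1} t (g'^{-1}g)$. Thus $W = N_G(T)/T$ acts freely on each fiber of $\Phi$, and every fiber has exactly $|W|$ elements.

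Second, I would compute the Jacobian of $\Phi$ at $(eT, t)$. Identifying $T_{eT}(G/T) \cong \gl/\tl$ and $T_t T \cong \tl$ through left translation, a further left translation of the target turns the differential into the $\F$-linear map
\[ \gl/\tl \oplus \tl \longrightarrow \gl, \qquad (\bar X, Y) \longmapsto (\mathrm{Ad}(t^{-1}) - 1)X + Y. \]
Passing to a splitting extension $\E/\F$ of $T$ and using the root decomposition $\gl_\E = \tl_\E \oplus \bigoplus_{\alpha \in R(G,T)} \gl_\alpha$, this operator is diagonal with eigenvalues $\alpha(t)^{-1} - 1$ on $\gl_\alpha$ and $1$ on $\tl$, yielding
\[ |\det d\Phi_{(eT, t)}| = \prod_{\alpha \in R(G,T)} |\alpha(t) - 1| = |D(t)|. \]
By Galois invariance this absolute value descends to $\F$.

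Third, since the Jacobian is nonzero on the regular locus, the $p$-adic implicit function theorem makes $\Phi$ a local $\F$-analytic isomorphism there, and the $p$-adic change-of-variables formula together with the $|W|$-to-one covering gives
\[ \int_{{}^G T^{\mathrm{reg}}} f(g)\, dg = \frac{1}{|W|} \int_{T^{\mathrm{reg}}} |D(t)| \int_{G/T} f(gtg^{-1})\, dg^*\, dt; \]
extending trivially across the singular null set produces the Weyl integration formula. The main obstacle will be establishing the Jacobian computation together with $\F$-analyticity and submersiveness of $\Phi$ on the regular locus without recourse to the exponential map (which is unavailable in positive characteristic); concretely, this requires étale-local models built from Chevalley coordinates on the root subgroups, together with a Bruhat-type decomposition to handle the $G/T$ factor. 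A secondary, purely bookkeeping issue is that the calculation naturally produces the factor $|D(t)|$, which must be reconciled with the measure normalizations $\int_G f\,dg = \int_{G/T}\int_T f(gt)\,dt\,dg^*$ assumed in the statement.
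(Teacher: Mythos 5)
Your approach is essentially the one the paper takes: view the Weyl integration formula as a change-of-variables identity for the conjugation map on the regular locus, with the Jacobian determinant computed from the root decomposition and the covering degree equal to $|W|$. The paper does exactly this by importing the proof of \cite[Theorem 3.14.1]{DK00} for real compact groups, which rests on the same three ingredients you list. The genuine content of the paper's proof is smaller than you anticipate: the one point that requires modification is the choice of an $Ad(T)$-stable $\F$-rational complement $\mathfrak{q}$ to $\tl$ in $\gl$. The paper takes $\mathfrak{q} := \bigoplus_{\alpha\in R(G,T)}\gl_\alpha$, defined a priori only over a splitting extension $\E$, and verifies Galois stability via the identity $t\gamma(x)t^{-1}=\gamma(\alpha(\gamma^{-1}(t)))\gamma(x)$ so that $\mathfrak{q}$ descends to $\F$; your observation ``by Galois invariance this absolute value descends to $\F$'' is the same point in disguise. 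Where you express worry about $\F$-analyticity and submersiveness ``without recourse to the exponential map,'' the paper sidesteps this: the analytic manifold structure on $G(\F)$ and the local triviality of $G\to G/T$ are taken as black boxes from \cite[\S3.1, \S3.5]{PR94} and \cite[LG \S4.5]{Se65}, and the Jacobian computation is then purely linear-algebraic in local charts and never needs an exponential map. So you do not need the étale-local Chevalley-coordinate models or a Bruhat decomposition that you propose; the local trivialization of the principal $T$-bundle $G\to G/T$ already supplies the needed charts.

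One further remark on your final ``bookkeeping issue.'' Your derivation correctly produces the factor $|D(t)|$ in the Weyl integration formula, i.e.
\[ \int_{^GT} f(g)\,dg = \frac{1}{|W|}\int_T |D(t)|\int_{G/T} f(gtg^{-1})\,dg^*\,dt. \]
The statement of the theorem as printed omits this factor, but every application of the formula elsewhere in the paper (the Introduction, and Proposition \ref{ontglt}) includes it, so the omission in the statement is a typographical slip and your formula is the intended one. You should not try to ``reconcile'' your $|D(t)|$ with the printed statement; your version is correct.
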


The Weyl integration formula is a well-known result in the theory of reductive groups. However the author could not find a "spelled out" proof of the formula for the $p$-adic case in the literature. Harish-Chandra \cite[Lemma 42]{HC70} mentions that (in the characteristic $0$ case) the proof is the same as in the real case. The proof in the real case depends on the substitution rule from the theory of analytic manifolds. For the readers, that are not quite familiar with analytic/differential geometry, we give a couple of references and results, before we start proving the Weyl integration formula.

We refer to \cite{Se65} for the definitions of an analytic manifold and of a tangent space in a point.\\
Since $G(\F)$ is Zariski-dense in $G$ and $G$ is an affine linear algebraic group, we have an analytic structure on $G(\F)$, by \cite[\S 3.1]{PR94}.\\
For the integration on the group $G$ we refer to \cite[\S 3.5]{PR94}. The $G$-invariant measure on $G$ can be defined as follows: Let $\omega \in \bigwedge^nT^*_e(G)$ with $\omega\not=0$, then define $\omega_g:= L^*_{g^{-1}}\omega$. The map $g\mapsto \omega_g$ is a $G$-invariant $n$-form on $G$ and leads to a $G$-invariant measure.\\

Let $H$ be an analytic submanifold of $G$. There is an unique analytic structure on the quotient $G/H$ making $\pi:G\rightarrow G/H$ into a submersion by \cite[LG \S 4.5]{Se65}. Now $G$ is a so-called right principal $H$-bundle over the base $G/H$, \cite[LG \S 4.5, Theorem 6]{Se65}. In particular there is for every $b\in G/H$ an open set $U_b$ and an analytic isomorphism $\tau : \pi^{-1}U_b \rightarrow U_b\times H$, such that $\tau(x)= (\pi(x),\phi(x))$. The function $\tau$ is called the local trivialisation.\\

As shown in \cite[\S 3.13]{DK00} we can choose the differential forms on $G$, $G/T$ and $T$ in such a way that
\[ \int_Gf(g)dg = \int_{T} \int_{G/T} f(gt) dgdt.\]
If $\tau : \pi^{-1}U\rightarrow T\times U$ is a local trivalization, then
\[ \int_{\pi^{-1}U}f(g)dg = \int_{U} \int_{T} f(\tau^{-1}(u,t)) dgdt.\]
\begin{proof}[Proof of Theorem \ref{WIF}] The proof of \cite[Theorem 3.14.1]{DK00} in the real compact case works in this case as well. We only take a different definition of the subspace $\mathfrak{q}$. It has to be a $Ad(T)$-stable $\F$-linear subspace of $\gl$, which is complementary to $\tl$. When the characteristic of $\F$ is zero, the resulting subspace is the same.

The Lie algebra $\tl$ of $T$ has a complementary $Ad(T)$-invariant space $\mathfrak{q}$ defined over $\F$. We define $\mathfrak{q} := \bigoplus_{\alpha\in R(G,T)} \gl_\alpha$. Then $\mathfrak{q}$ is $Ad(T)$-invariant. If $T$ is $\F$-split, then clearly $\gl_\alpha$ is defined over $\F$ and hence also $\mathfrak{q}$ is defined over $\F$. Take $\E$ a Galois-extension of $\F$ such that $T$ is $\E$-split. Let $\Gamma := \text{Gal}(\E:\F)$. Since $T$ is $\E$-split, $\mathfrak{q}$ and $\gl_\alpha$ are defined over $\E$. Thus $\mathfrak{q}$ is defined over $\F$ if and only if it is $\Gamma$-invariant.
Let $x\in \gl_\alpha$, then for all $\gamma\in \Gamma$:
\[ t\gamma(x)t^{-1} = \gamma( \gamma^{-1}(t)x\gamma^{-1}(t^{-1})) = \gamma(\alpha(\gamma^{-1}(t))x)=\gamma(\alpha(\gamma^{-1}(t)))\gamma(x). \]
Thus $\gamma(\alpha(\gamma^{-1}(\cdot)))\in R(G,T)$, hence $\gamma(x)\in \mathfrak{q}$.
\end{proof}

\bibliographystyle{alpha}

\end{document}